 \def\@textbottom{\vskip \z@ \@plus 1pt}
 \let\@texttop\relax
\newcommand{\RNum}[1]{\lowercase\expandafter{\romannumeral #1\relax}}
\theoremstyle{plain}
\newtheorem{thm}{Theorem}[section]
\newtheorem{lem}[thm]{Lemma}
\newtheorem{cor}[thm]{Corollary}
\newtheorem{prop}[thm]{Proposition}
\newtheorem{rmk}[thm]{Remark}
\newtheorem{thm-con}[thm]{Theorem-Conjecture}
\numberwithin{equation}{section}
\theoremstyle{definition}
\newtheorem{defn}[thm]{Definition}
\newcommand{\F}{\mathbb F}
\begin{document}
\title[Local permutation polynomials, their companions, and related enumeration results]{Bivariate local permutation polynomials, their companions, and related enumeration results}

\author[S.U. Hasan]{Sartaj Ul Hasan}
\address{Department of Mathematics, Indian Institute of Technology Jammu, Jammu 181221, India}
\email{sartaj.hasan@iitjammu.ac.in}

   \author[R. Kaur]{Ramandeep Kaur}
  \address{Department of Mathematics, Indian Institute of Technology Jammu, Jammu 181221, India}
  \email{2022rma0027@iitjammu.ac.in}

\author[H. Kumar]{Hridesh Kumar}
\address{Department of Mathematics, Indian Institute of Technology Jammu, Jammu 181221, India}
\email{2021rma2022@iitjammu.ac.in}

 \thanks{The first named author acknowledges partial support from the Core Research Grant CRG/2022/005418, provided by the Science and Engineering Research Board, Government of India. The second and third named authors are supported by the Prime Minister’s Research Fellowship, under PMRF IDs 3003658 and 3002900, respectively, at IIT Jammu.}

\begin{abstract}
We construct a new family of permutation group polynomials over finite fields of arbitrary characteristic, which are special types of bivariate local permutation polynomials. For this family, we explicitly construct their companion. We also determine the total number of permutation group polynomials of this form. Moreover, we resolve the problem of enumerating $e$-Klenian polynomials over finite fields for $e\geq 1$, a problem previously noted as nontrivial by Gutierrez and Urroz (2023). In addition, we provide the exact number of permutation group polynomials equivalent to our proposed permutation group polynomials, as well as the exact number of those permutation group polynomials equivalent to $e$-Klenian polynomials.
\end{abstract}
\keywords{Finite fields, permutation polynomials, local permutation polynomials, Latin squares, symmetric group}
\subjclass[2020]{12E20, 11T06, 11T55}
\maketitle
\section{Introduction}\label{S1}
Let $\F_{q}$ denote the finite field with $q$ elements, where $q$ is a power of a prime, and let $\F_q[X_1, \ldots,X_m]$ represent the ring of polynomials in $m$ variables $X_1, \ldots, X_m$ over $\F_q$, with $m$ being a positive integer. It is well-known~\cite{LNH_1997} that any map from $\F_q^m$ to $\F_q$ can be uniquely expressed as a polynomial $f(X_1,X_2,\ldots,X_m) \in \F_q[X_1,X_2,\ldots,X_m]$ of degree strictly less than $q$ in each variable. Consequently, the study of functions from $\F_q^m$ to $\F_q$ is same as the  study of polynomials in $m$ variables over $\F_q$ with degree strictly less than $q$ in each variable. Thus, in what follows, we will consider only polynomials in several variables over $\F_q$. A polynomial $f\in\F_q[X_1,X_2,\ldots,X_m]$ is called a permutation polynomial (PP) if the equation $f(X_1,X_2,\ldots,X_m)=a$ has exactly $q^{m-1}$ solutions in $\F_q^{m}$ for each $a\in \F_q$. For $m=1$, the above definition coincides with the classical notion of a permutation polynomial in one variable. While permutation polynomials in one variable have a long history, the systematic study of permutation polynomials in several variables can be traced back to the seminal work of Niederreiter \cite{NH_1970}, where he classified all the PPs in several variables of degree at most two.

This paper investigates local permutation polynomials over finite fields. This concept was first introduced by Mullen~\cite{Mullen_1980, Mullen_1_1980}, who established necessary and sufficient conditions for polynomials in two and three variables to be local permutation polynomials over prime finite fields. A polynomial $f(X_1,X_2,\ldots,X_m) \in\F_q[X_1,X_2,\ldots,X_m]$ is called a local permutation polynomial (LPP) if for each $ i \in \{1, \ldots, m\}$ and for each tuple $\overline{a}_i:=(a_1,\ldots,a_{i-1},a_{i+1},\ldots,a_m)\in \F_q^{m-1}$, the univariate polynomial $f(a_1,\ldots,a_{i-1},X_i,a_{i+1},\ldots,a_m)\in \F_q[X_i]$ is a permutation polynomial over $\F_q$. Note that an LPP is always a permutation polynomial; however, the converse may not be true in general. For instance, $f(X_1,X_2)=X_2 \in \F_q[X_1,X_2]$ is a permutation polynomial, but it is not an LPP over $\F_q$. 

The notion of orthogonality of polynomials in several variables over a finite field was introduced by Niederreiter~\cite{NH_1971}. However, we restrict our discussion to the case of two variables, as our primary focus is on studying LPPs in two variables.

Two LPPs $f_1$ and  $f_2 \in \F_q[X_1,X_2]$ are said to be orthogonal (or companion) of each other if and only if the system of equations 
\begin{equation*}
\begin{cases}
f_1(X_1,X_2)=a,\\
 f_2(X_1,X_2)=b
\end{cases}
\end{equation*}
has a unique solution $(x_1,x_2) \in \F_q \times \F_q$ for all $ (a,b) \in \F_q \times \F_q$. 

Gutierrez and Urroz, in their seminal paper~\cite{JJ_2023}, established a one-to-one correspondence between bivariate LPPs and a specific type of $q$-tuples of permutation polynomials in one variable over the finite field with $q$ elements. This correspondence provides a beautiful algebraic framework for investigating bivariate LPPs over finite fields.  In particular, if the elements of the $q$-tuple form a subgroup of $\mathfrak{S}_q$, the symmetric group on $q$ elements of $\F_q$, then the corresponding LPP is referred to as a permutation group polynomial. Using this correspondence, the authors in~\cite{JJ_2023} constructed a family of bivariate LPPs based on a class of symmetric subgroups without fixed points, referred to as $e$-Klenian polynomials. Motivated by the work of Gutierrez and Urroz~\cite{JJ_2023}, Hasan and Kumar~\cite{HK} constructed three new families of permutation group polynomials and provided their corresponding companions. Additionally, they investigated companions for $e$-Klenian polynomials over finite fields of even characteristic.
 
The importance of bivariate LPPs lies in the fact that there is a one-to-one correspondence~\cite{Mullen_book} between the set of Latin squares of order $q$, where $q$ is a prime power, and the set of bivariate LPPs over the finite field of order $q$. A Latin square of order $n$ is an $n \times n$ matrix with entries from a set $S$ of cardinality $n$, such that every element of $S$ appears exactly once in each row and each column.  Two Latin squares are said to be orthogonal if, when superimposed, they produce all the ordered pairs of $S \times S$. A set of Latin squares of order $n$ that are pairwise orthogonal is referred to as Mutually Orthogonal Latin Squares (MOLS). An orthogonal Latin square of the Latin square associated with a particular bivariate LPP corresponds to a companion of that LPP. Consequently, the study of LPPs (or their companions) over $\F_q$ is essentially equivalent to the study of Latin squares (or orthogonal Latin squares) of order $q$. Constructing Mutually Orthogonal Latin Squares (MOLS) is known to be a challenging combinatorial problem. However, the companion of a given bivariate LPP provides an elegant algebraic method for constructing MOLS~\cite{JJ_2023}. Latin squares and MOLS have a wide range of applications in several areas such as combinatorial design theory \cite{MM_2017}, cryptography \cite{SS_1992} and coding theory \cite{KD_2015, MGFL_2020,WW_2014}.
  
In the existing literature, only two papers~\cite{JJ_2023, HK} focus on permutation group polynomials over finite fields. In this work, we construct a new family of permutation group polynomials based on a specific subgroup of the symmetric group $\mathfrak{S}_q$. As noted earlier, two bivariate LPPs are companions if and only if their associated Latin squares are orthogonal. While finding an explicit expression for a companion of a given bivariate LPP is generally a challenging problem, we explicitly provide companion for the family of LPPs introduced in this paper.

Two permutation group polynomials are of the same form if their associated groups, as subgroups of $\mathfrak{S}_q$, are isomorphic. One of the longstanding and challenging problems in combinatorics is the enumeration of all Latin squares of large order $n$. Euler~\cite{EulerT} first addressed this problem, and despite several attempts -- see, for example,~\cite{HKO, KD_2015, MW} and references therein -- the exact count of Latin squares has been determined only for orders up to $n=11$. As permutation group polynomials are a special class of bivariate LPPs, which are inherently linked to Latin squares, enumerating these polynomials over the finite field $\F_q$ is equivalent to enumerate a specific subclass of Latin squares of order $q$. This paper provides the enumeration of all Latin squares that can be constructed from the proposed family of permutation group polynomials. More precisely, we determine the exact number of permutation group polynomials that arise in the form constructed in this paper, and separately determine the number of permutation group polynomials that are equivalent to this family. Gutierrez and Urroz [5] determined the exact number of $0$-Klenian polynomials over $\F_q$ and observed that enumerating those $e$-Klenian polynomials for $e \geq 1$ is nontrivial. In this paper, we present a complete solution to this problem. We also give the exact number of permutation group polynomials that are equivalent to $e$-Klenian polynomials.

The remainder of this paper is organized as follows. Section~\ref{S2} introduces key definitions and lemmas. In Section~\ref{S3}, we construct a new family of permutation group polynomials and demonstrate that they are not equivalent to the known classes of permutation group polynomials. Furthermore, we determine the companions for these  permutation group polynomials. Section~\ref{S4} is devoted to counting results for the permutation group polynomials constructed in Section~\ref{S3}, as well as for $e$-Klenian polynomials over $\F_q$ with $e \geq 1$.
Finally, we conclude our paper in Section \ref{S5}. 

\section{Preliminaries}\label{S2}
In this section, we introduce preliminary definitions and results needed for the subsequent sections. Throughout this paper, the elements of $\F_q$ are listed as $\F_q = \{c_0, c_1, \ldots, c_{q-1}\}$. A $q$-tuple $(\beta_0, \ldots, \beta_{q-1}) \in {\mathfrak S}_q^q$ is referred to as a permutation polynomial tuple if, for $0\leq i \neq j \leq q-1$, the composition $\beta_i \beta_j^{-1}$ has no fixed points.

The following lemma provides a one-to-one correspondence between the set of bivariate LPPs over $\F_q$ and the set of permutation polynomial tuples in ${\mathfrak S}_q^q$.
\begin{lem}\cite[Lemma 7]{JJ_2023}
\label{tpl}
There is a bijection between the set of bivariate local permutation polynomials $f\in \F_q[X_1,X_2]$ and the set of $q$-tuples $\underline{\beta}_f :=(\beta_0,\ldots,\beta_{q-1}) \in {\mathfrak S}_q^q$ of permutations of $\mathbb F_q$ such that $\beta_i \beta_j^{-1}$ has no fixed points in $\F_q$ for $0 \leq i \neq j \leq q-1$. Furthermore, $f$ and $\underline{\beta}_f$ are associated to each other by the following relation: for each $0 \leq i \leq q-1$, $f(x,\beta_i(x))=c_i$ for all $x\in \F_q$.
\end{lem}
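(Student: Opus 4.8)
The plan is to exploit the standard identification between polynomials in $\F_q[X_1,X_2]$ of degree strictly less than $q$ in each variable and arbitrary functions $\F_q^2 \to \F_q$; under this identification it suffices to construct a bijection between LPP-functions and permutation polynomial tuples and to verify that the two natural maps are mutually inverse. So I would define a forward map sending an LPP $f$ to a tuple $\underline{\beta}_f$, and a backward map reconstructing a function from a given tuple, and then check they invert each other.

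For the forward direction, fix an LPP $f$ and, for each index $i$, define $\beta_i \colon \F_q \to \F_q$ by declaring $\beta_i(x)$ to be the unique $y \in \F_q$ with $f(x,y) = c_i$. Well-definedness is exactly the hypothesis that for each fixed $x$ the univariate map $y \mapsto f(x,y)$ is a permutation (the LPP condition in the second variable), so each value $c_i$ is attained exactly once. I would then show $\beta_i \in \mathfrak{S}_q$: if $\beta_i(x) = \beta_i(x') = y$, then $f(x,y) = c_i = f(x',y)$, and injectivity of $x \mapsto f(x,y)$ (the LPP condition in the first variable) forces $x = x'$; injectivity on a finite set gives bijectivity. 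Finally, the no-fixed-point condition follows almost immediately: a fixed point $z$ of $\beta_i\beta_j^{-1}$ with $x := \beta_j^{-1}(z)$ would give $\beta_i(x) = z = \beta_j(x)$, whence $c_i = f(x,z) = c_j$, contradicting $i \neq j$.

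For the backward direction, given a tuple $(\beta_0,\ldots,\beta_{q-1})$ with the stated property, I would define $f(x,y) := c_i$, where $i$ is the unique index with $\beta_i(x) = y$. The crucial point is that for each fixed $x$ the assignment $i \mapsto \beta_i(x)$ is a bijection from the index set onto $\F_q$: it is injective because $\beta_i(x) = \beta_j(x)$ for $i \neq j$ would make $\beta_j(x)$ a fixed point of $\beta_i\beta_j^{-1}$, and injectivity between two sets of size $q$ yields surjectivity, guaranteeing that the defining index $i$ exists and is unique. One then checks $f$ is an LPP: for fixed $x$, the map $y \mapsto f(x,y)$ is the composite of the bijection $y \mapsto i(y)$ (inverse of $i \mapsto \beta_i(x)$) with the bijection $i \mapsto c_i$; for fixed $y$, injectivity of $x \mapsto f(x,y)$ follows from injectivity of each $\beta_i$.

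The remaining task is bookkeeping: verifying that the forward and backward maps are mutually inverse, which is immediate from the defining relation $f(x,\beta_i(x)) = c_i$ in both directions. I do not expect a genuine obstacle here; the only subtlety, and the conceptual heart of the lemma, is to keep straight which of the two LPP conditions corresponds to which property of the tuple. Concretely, the second-variable condition encodes well-definedness of the $\beta_i$ together with the no-fixed-point property, while the first-variable condition encodes that each $\beta_i$ is itself a permutation; making this dictionary precise, and confirming that the listed hypotheses are exactly strong enough, is the main thing to get right.
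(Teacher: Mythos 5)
Your proof is correct. The paper itself does not prove this statement --- it is quoted verbatim from Gutierrez and Urroz \cite[Lemma 7]{JJ_2023} --- and your argument is the standard one for this correspondence: the level sets $\{(x,y): f(x,y)=c_i\}$ of an LPP are graphs of permutations $\beta_i$ (well-definedness from the second-variable condition, injectivity of each $\beta_i$ from the first-variable condition, the no-fixed-point property from $i\neq j$), and conversely a tuple with the no-fixed-point property partitions $\F_q^2$ into $q$ such graphs, defining an LPP; you also correctly invoke the identification of functions $\F_q^2\to\F_q$ with polynomials of degree less than $q$ in each variable, which is exactly the convention the paper fixes in Section~\ref{S1}, and the two constructions are manifestly mutually inverse.
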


Let $(\beta_0, \beta_1, \ldots, \beta_{q-1}) \in {\mathfrak S}_q^q$ be a $q$-tuple whose elements form a subgroup of ${\mathfrak S}_q$. Then, it follows that $(\beta_0, \beta_1, \ldots, \beta_{q-1})$ constitutes a permutation polynomial tuple if and only if no $\beta_i$ (for $i=0,1,\ldots, q-1$), other than the identity permutation, fixes any element of $\F_q$. In this case, the corresponding local permutation polynomial is called a permutation group polynomial. 

As a result, by Lemma~\ref{tpl}, constructing a permutation group polynomial is equivalent to identifying a subgroup of ${\mathfrak S}_q$ of order $q$ such that no non-identity permutation of this subgroup fixes any element of $\F_q$. Using this, Gutierrez and Urroz~\cite[Corollary 17]{JJ_2023} introduced a family of permutation group polynomials, referred to as $e$-Klenian polynomials. The enumeration of $e$-Klenian polynomials will be discussed in Section~\ref{S4}. For completeness, their result is restated in the following lemma.

\begin{lem}\cite[Corollary 17]{JJ_2023}
\label{JJ_tpl}
Let $q=p^r$, where $p$ is a prime number and $r$ is a positive integer. Let $0 \leq e < r$, $\ell =p^e$ and  $t=\frac{q}{\ell}$. Let $a=C_{0,a} \cdots C_{t-1,a}$, where
\[
C_{i,a}=(c_{i\ell},c_{i\ell+1}, \ldots, c_{(i+1)\ell-1})~\mbox{for all}~0\leq i \leq t-1
\]
and $b=C_{0,b} \cdots C_{\ell-1,b}$, where
\[
C_{j,b}=(c_{j},c_{j+\ell}, \ldots, c_{j+(t-1)\ell})~\mbox{for all}~0\leq j \leq \ell-1.
\]
Then the set defined by \( G = \{a^{\tilde{i}} b^{\tilde{j}} : 0 \leq \tilde{i} \leq \ell - 1, 0 \leq \tilde{j} \leq t - 1 \} \) is a subgroup of \( \mathfrak{S}_q \), such that each permutation of \( G \), except the identity, has no fixed points, and \( |G| = q \), where \( |G| \) denotes the order of the group \( G \).
\end{lem}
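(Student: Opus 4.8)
The plan is to introduce a coordinate system on the index set $\{0,1,\dots,q-1\}$ in which $a$ and $b$ become \emph{independent} cyclic shifts, thereby reducing the entire statement to the elementary theory of translations on a finite abelian group. First I would identify each permutation in $\mathfrak{S}_q$ with a permutation of $\{0,1,\dots,q-1\}$ via $c_n \leftrightarrow n$, and set up the bijection $\phi \colon \{0,\dots,q-1\} \to (\mathbb{Z}/t\mathbb{Z}) \times (\mathbb{Z}/\ell\mathbb{Z})$ coming from Euclidean division: writing $n = k\ell + j$ with $0 \le k \le t-1$ and $0 \le j \le \ell-1$, set $\phi(n) = (k,j)$. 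Since $\ell t = q$, this is a bijection.

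Next I would read off the actions of $a$ and $b$ in these coordinates directly from the given cycle descriptions. The cycle $C_{i,a}$ increments the position within the $i$-th block of length $\ell$, so $a$ sends $n = k\ell + j$ to $k\ell + (j+1 \bmod \ell)$; in coordinates, $a \colon (k,j) \mapsto (k,\, j+1)$, incrementing only the second coordinate modulo $\ell$. The cycle $C_{j,b}$ advances $n = j + k\ell$ by $\ell$, so $b \colon (k,j) \mapsto (k+1,\, j)$, incrementing only the first coordinate modulo $t$. In particular $a^{\tilde i} b^{\tilde j}$ acts by $(k,j) \mapsto (k+\tilde j,\, j+\tilde i)$, a translation of the abelian group $(\mathbb{Z}/t\mathbb{Z}) \times (\mathbb{Z}/\ell\mathbb{Z})$.

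Given this, the three assertions follow at once. Because $a$ and $b$ act on complementary coordinates they commute, and clearly $a^\ell = b^t = \mathrm{id}$; hence $G = \langle a,b\rangle$ is abelian, every element equals $a^{\tilde i} b^{\tilde j}$ with $0 \le \tilde i \le \ell-1$ and $0 \le \tilde j \le t-1$, so the displayed set is closed under multiplication and inversion and is a subgroup. Distinct pairs $(\tilde i,\tilde j)$ in these ranges induce distinct translations, so these $\ell t = q$ elements are pairwise distinct and $|G| = q$. Finally, a point $(k,j)$ is fixed by $a^{\tilde i} b^{\tilde j}$ if and only if $\tilde i \equiv 0 \pmod{\ell}$ and $\tilde j \equiv 0 \pmod{t}$, that is, if and only if $a^{\tilde i} b^{\tilde j}$ is the identity; thus every non-identity element of $G$ is fixed-point-free.

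The only real content is the passage from cycle notation to coordinates: once $a$ and $b$ are recognized as independent shifts, fixed-point-freeness is just the observation that a nontrivial translation on a finite abelian group has no fixed points. Accordingly, the main obstacle is purely bookkeeping — keeping the two indexing conventions consistent (the block decomposition $n = i\ell + s$ governing $a$ versus the arithmetic-progression decomposition $n = j + k\ell$ governing $b$) and verifying that they are the \emph{same} decomposition $n = k\ell + j$, so that $a$ and $b$ genuinely act on complementary coordinates of $(\mathbb{Z}/t\mathbb{Z}) \times (\mathbb{Z}/\ell\mathbb{Z})$.
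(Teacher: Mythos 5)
Your proof is correct. Note, however, that the paper does not prove this statement at all: it is quoted verbatim from Gutierrez--Urroz \cite[Corollary 17]{JJ_2023} and used as a black box, so there is no internal proof to compare against; your argument supplies one.

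On its own merits, the argument is sound and complete. The one point on which everything hinges --- that the block decomposition $n=i\ell+s$ underlying $a$ and the progression decomposition $n=j+k\ell$ underlying $b$ are the \emph{same} Euclidean division of $n$ by $\ell$, so that $a$ and $b$ become independent coordinate shifts on $(\mathbb{Z}/t\mathbb{Z})\times(\mathbb{Z}/\ell\mathbb{Z})$ --- is exactly the right observation, and you state and verify it explicitly. Once that is in place, commutativity, $a^{\ell}=b^{t}=\mathrm{id}$, closure of the displayed set, the count $|G|=\ell t=q$, and fixed-point-freeness (a nontrivial translation of a finite abelian group has no fixed points) all follow as you say. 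It is worth remarking that your coordinate formulas are precisely the ones this paper itself uses implicitly when it later works with $a$ and $b$: in the proof of Lemma~\ref{Atleast_lemma} the authors write $a(c_{rp^e+s})$ and $b(c_{rp^e+s})$ in exactly the shift form you derive, so your proof is consistent with, and effectively reconstructs, the mechanism the paper relies on in Section~\ref{S4}.
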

The following definitions and lemma will play a crucial role in determining the companion of the family of permutation group polynomials constructed in the subsequent section.
\begin{defn}\label{D21}
 Let $h_1,h_2\in\F_q[X]$ be two permutation polynomials. Let $A :=\{(c,h_1(c)):c \in \F_q\}$ and $B:=\{ (c,h_2(c)):c \in \F_q\}$. We say that $h_1$ intersects $h_2$ simply  if $|A \cap B|=1$. 
 \end{defn}
 
 \begin{defn}\label{D22}
Let $f\in\F_q[X_1,X_2]$ be an LPP and $\underline{\beta}_f=(\beta_0,\ldots,\beta_{q-1}) \in {\mathfrak S}_q^q$ be the corresponding permutation polynomial tuple. We say that a univariate permutation polynomial $h$ intersects the bivariate LPP $f$ simply if $h$ intersects $\beta_i$ simply for each $0 \leq i \leq q-1$. 
 \end{defn}
 
 \begin{lem}\label{L21}\cite[Lemma 4.4]{HK}
Let $f\in \F_q[X_1,X_2]$ be a permutation group polynomial and $h\in\F_q[X]$ be a permutation polynomial which intersects $f$ simply. Let $\underline{\beta}_f=(\beta_0,\ldots,\beta_{q-1})$ be the permutation polynomial tuple corresponding  to $f$. Then the polynomial $g$ associated  to $\underline{\gamma}_g=(h \beta_0, \ldots,h \beta_{q-1})$ is a companion of $f$.
\end{lem}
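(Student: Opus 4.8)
The plan is to establish the claim directly through the tuple correspondence of Lemma~\ref{tpl}, reducing the companion property to a counting statement that the simple-intersection hypothesis supplies. Throughout I read compositions right-to-left, so $(h\beta_i)(x)=h(\beta_i(x))$. As a preliminary step I would confirm that $\underline{\gamma}_g=(h\beta_0,\ldots,h\beta_{q-1})$ is genuinely a permutation polynomial tuple, so that $g$ is a well-defined bivariate LPP and the statement makes sense. Each $\gamma_i=h\beta_i$ is a permutation, being a composition of permutations, and for $i\neq j$ one computes $\gamma_i\gamma_j^{-1}=h\,(\beta_i\beta_j^{-1})\,h^{-1}$, which is conjugate to $\beta_i\beta_j^{-1}$. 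Since $\underline{\beta}_f$ is a permutation polynomial tuple, $\beta_i\beta_j^{-1}$ has no fixed points, and conjugation preserves this property, so $\gamma_i\gamma_j^{-1}$ has no fixed points either. Hence $g$ exists by Lemma~\ref{tpl}.

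Next I would unwind the definition of companion. Fix a target $(a,b)=(c_k,c_l)\in\F_q\times\F_q$. Using the defining relations $f(x,\beta_i(x))=c_i$ and $g(x,\gamma_i(x))=c_i$, together with the fact that for a fixed first coordinate each of the maps $x_2\mapsto f(x_1,x_2)$ and $x_2\mapsto g(x_1,x_2)$ is a bijection (the LPP property), the system $f(x_1,x_2)=c_k$, $g(x_1,x_2)=c_l$ is equivalent to the pair of equations $x_2=\beta_k(x_1)$ and $x_2=h(\beta_l(x_1))$. Eliminating $x_2$, the number of solutions $(x_1,x_2)$ equals the number of $x_1\in\F_q$ satisfying $\beta_k(x_1)=h(\beta_l(x_1))$. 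Performing the bijective change of variable $y=\beta_l(x_1)$ turns this into $(\beta_k\beta_l^{-1})(y)=h(y)$.

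Here the group hypothesis does the decisive work: since $\{\beta_0,\ldots,\beta_{q-1}\}$ is a subgroup of $\mathfrak{S}_q$, the composition $\beta_k\beta_l^{-1}$ equals $\beta_m$ for some index $m$. The hypothesis that $h$ intersects $f$ simply means precisely that $h$ intersects every $\beta_i$ simply, so in particular $\beta_m(y)=h(y)$ has exactly one solution $y$, i.e. $|A\cap B|=1$ in the notation of Definition~\ref{D21}. Tracing back through the bijection $y=\beta_l(x_1)$ and the relation $x_2=\beta_k(x_1)$, the original system has a unique solution $(x_1,x_2)$ for every $(c_k,c_l)$, which is exactly the companion property; the diagonal case $k=l$, where $\beta_m$ is the identity, is handled uniformly since the identity is itself one of the $\beta_i$. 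I expect the main obstacle to be this appeal to the group structure: it is what upgrades the hypothesis ``$h$ intersects each listed $\beta_i$ simply'' into control over every product $\beta_k\beta_l^{-1}$, and it is the reason the statement is restricted to permutation group polynomials rather than to arbitrary bivariate LPPs, for which $\beta_k\beta_l^{-1}$ need not reappear among the listed permutations.
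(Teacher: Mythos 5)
Your proposal is correct and complete: you verify that $\underline{\gamma}_g=(h\beta_0,\ldots,h\beta_{q-1})$ is a genuine permutation polynomial tuple (conjugation by $h$ preserves fixed-point-freeness), reduce the companion condition for a target $(c_k,c_l)$ to counting solutions of $(\beta_k\beta_l^{-1})(y)=h(y)$, and close by using the subgroup property to write $\beta_k\beta_l^{-1}=\beta_m$, so the simple-intersection hypothesis yields exactly one solution. The paper itself contains no proof to compare against---Lemma~\ref{L21} is quoted from \cite[Lemma 4.4]{HK}---but your argument is the natural one for this statement and correctly isolates why the group structure (rather than mere LPP structure) is needed.
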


\section{Permutation group polynomials and their companions}\label{S3}
As discussed in the previous section, permutation group polynomials can be constructed by identifying subgroups of ${\mathfrak S}_q$ of order $q$, where no non-identity permutation of the subgroup fixes any element of $\F_q$. In this section, we use this technique to construct a new family of permutation group polynomials over finite fields of arbitrary characteristic. We also demonstrate that these permutation group polynomials are not equivalent to previously known classes of permutation group polynomials. In addition, we also provide the explicit expressions for their companions. 

\begin{thm}\label{T31}
Let \( q = p^n \), where \( p \) is a prime number and \( n \ge 2 \) is a positive integer. Let \( \mathbb{F}_q = \{ c_0, c_1, \ldots, c_{q-1} \} \) denote the finite field with \( q \) elements. For $\delta \in \{1,2\}$, define
\[
a_0:=\displaystyle \prod_{j_0=0}^{p^{n-\delta}-1}(c_{j_0p^{\delta}},c_{j_0p^{\delta}+1},\ldots,c_{j_0p^{\delta}+p^{\delta}-1})
\]
and for $i \in \{1,2,\ldots,n-\delta\}$
\[
a_i:=\displaystyle \prod_{j_i<p^{i+\delta-1} \pmod {p^{i+\delta}}}(c_{j_i},c_{j_i+p^{i+\delta-1}},\ldots,c_{j_i+(p-1)p^{i+\delta-1}}).
\]
 Then the set
\[
G = \{ a_0^{i_0} a_1^{i_1} \cdots a_{n-\delta}^{i_{n-\delta}} \mid 0 \leq i_0 \leq p^{\delta}-1, 0 \leq  i_1, \ldots, i_{n-\delta} \leq p-1 \}
\]
is an Abelian subgroup of the symmetric group $\mathfrak{S}_q$ of order $q$. Moreover, no permutation of $G$, except the identity, has any fixed point in $\mathbb{F}_q$.  
\end{thm}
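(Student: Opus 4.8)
The plan is to work entirely at the level of the index set $\{0,1,\dots,q-1\}$ (the field structure of $\F_q$ plays no role here) and to translate each generator $a_i$ into an explicit arithmetic operation on the base-$p$ digits of indices. Writing $k = \sum_{m=0}^{n-1} d_m p^m$ with $0 \le d_m \le p-1$, I would first establish the following digit description. For $a_0$: the cycle containing $c_k$ consists of all indices sharing the same quotient $\lfloor k/p^\delta\rfloor$, and $a_0$ sends $k \mapsto p^\delta\lfloor k/p^\delta\rfloor + ((k+1)\bmod p^\delta)$; that is, $a_0$ acts as the ``$+1$ odometer'' on the lowest $\delta$ digits $(d_0,\dots,d_{\delta-1})$, wrapping without carrying into position $\delta$. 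For $1 \le i \le n-\delta$: the defining condition $j_i \bmod p^{i+\delta} < p^{i+\delta-1}$ selects exactly those indices whose digit at position $i+\delta-1$ is $0$, and the displayed cycle then increments that digit through $0,1,\dots,p-1$; hence $a_i$ acts as $d_{i+\delta-1}\mapsto d_{i+\delta-1}+1 \pmod p$, leaving every other digit fixed.

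Once this dictionary is in place, the remaining claims follow quickly. First I would note that $a_0$ depends on and alters only digits $0,\dots,\delta-1$, while each $a_i$ (for $i\ge 1$) depends on and alters only digit $\delta+i-1$; since these position-sets are pairwise disjoint and the first $\delta$ digits never carry, the generators act on independent coordinates and therefore commute, so $G$ is Abelian. Counting is then immediate: $a_0$ has order $p^\delta$ and each $a_i$ has order $p$, and because the generators move disjoint blocks of digits, the map $(i_0,i_1,\dots,i_{n-\delta}) \mapsto a_0^{i_0}a_1^{i_1}\cdots a_{n-\delta}^{i_{n-\delta}}$ is injective on the index ranges $0\le i_0\le p^\delta-1$ and $0\le i_1,\dots,i_{n-\delta}\le p-1$. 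This gives $G\cong \mathbb{Z}_{p^\delta}\times(\mathbb{Z}_p)^{\,n-\delta}$ and $|G| = p^\delta\cdot p^{\,n-\delta} = p^n = q$, confirming that $G$ is a subgroup of order $q$.

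For the fixed-point-free property, take a nonidentity element $g = a_0^{i_0}a_1^{i_1}\cdots a_{n-\delta}^{i_{n-\delta}}$, so $(i_0,\dots,i_{n-\delta})\neq(0,\dots,0)$. Applying the dictionary, $g$ sends an index $k$ with digits $(d_0,\dots,d_{n-1})$ to the index whose lowest $\delta$ digits encode $\big(\sum_{m<\delta}d_m p^m + i_0\big)\bmod p^\delta$ and whose digit at each position $\delta+i-1$ is $(d_{\delta+i-1}+i_i)\bmod p$. For $g(k)=k$ I would compare digits: the lowest block forces $i_0\equiv 0\pmod{p^\delta}$, i.e.\ $i_0=0$, and each higher position forces $i_i\equiv 0\pmod p$, i.e.\ $i_i=0$; this contradicts $g\neq\mathrm{id}$. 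Crucially the shift applied to each coordinate is the same constant for every $k$, so the obstruction to a fixed point is independent of $k$, and a nonidentity $g$ therefore moves every index.

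The main obstacle is the first paragraph: carefully verifying the digit dictionary, in particular that $a_0$ behaves as a genuine mod-$p^\delta$ odometer with no carry past position $\delta-1$, that the selection condition $j_i\bmod p^{i+\delta}<p^{i+\delta-1}$ really picks out ``digit $i+\delta-1$ equals $0$'', and that this works uniformly for both $\delta=1$ and $\delta=2$ so that positions $0,\dots,n-1$ are covered exactly once. After that translation is secured, commutativity, the order computation, and the absence of fixed points are all routine bookkeeping on digits.
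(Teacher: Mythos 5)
Your proposal is correct and takes essentially the same route as the paper's proof: the paper's Equations (3.1)--(3.6) are precisely your digit dictionary (with the lowest $\delta$ base-$p$ digits grouped into a single mixed-radix coordinate $t_0 \in \{0,\dots,p^{\delta}-1\}$ and the remaining coordinates $t_1,\dots,t_{n-\delta}$ being ordinary base-$p$ digits), after which commutativity, the order count $|G|=p^{\delta}\cdot p^{n-\delta}=q$, and fixed-point-freeness are all read off from the fact that each generator translates its own coordinate by a constant. The only difference is presentational: the paper verifies commutativity by explicit case analysis ($h=0$ versus $h\ge 1$) rather than by invoking disjointness of the digit positions each generator touches, as you do.
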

\begin{proof}
To prove that $G$ is a subgroup of $\mathfrak{S}_q$, it suffices to show $a_ha_k = a_ka_h$ for all $h,k \in \{0,1,\ldots,n-\delta\}$. Before establishing this commutativity, we explicitly write the definitions of the permutations $a_i$'s and their powers. The cyclic structure of $a_0$ suggests that for any $0 \leq j_{0} \leq p^{n-\delta}-1$ and $0 \leq r_{0} \leq p^{\delta}-1$, we have
\begin{equation}\label{e31}
a_{0}(c_{j_{0}p^{\delta}+r_{0}})=\begin{cases}
c_{j_{0}p^{\delta}+r_{0}+1}& \text{ if } r_{0} < p^{\delta}-1, \\
c_{j_{0}p^{\delta}+r_{0}+1-p^{\delta}}    & \text{ if } r_{0} =p^{\delta}-1
       \end{cases}=c_{(r_{0}+1) \pmod {p^{\delta}}+{j_0}p^{\delta}}.
\end{equation}

Furthermore, for each  $ j_{i}<p^{i+\delta-1} \pmod {p^{i+\delta}}$ and $0 \leq r_{i} \leq p-1$, the cyclic structure of $a_i$, where $1 \leq i \leq n-\delta$, yields
\begin{equation}\label{e32}
a_i(c_{j_{i}+r_{i}p^{i+\delta-1}})=\begin{cases}
c_{j_{i}+(r_{i}+1)p^{i+\delta-1}}& \text{ if } r_{i} < p-1, \\
c_{j_{i}}& \text{ if } r_{i} =p-1
       \end{cases}=c_{((r_{i}+1)\pmod p)p^{i+\delta-1}+j_i}.
\end{equation}
Thus, for every $ \tau_0 \in \{0,1,\ldots, p^{\delta}-1\}$ and $\tau_1,\tau_2 \ldots, \tau_{n-\delta} \in \{0, 1, \ldots, p-1\}$, we have
\begin{equation}\label{e33}
a_0(c_{\tau_0 + \tau_1p^{\delta} + \tau_2p^{\delta+1}+ \cdots + \tau_{n-\delta}p^{n-1}})=c_{(\tau_0+1)\pmod {p^{\delta}} + \tau_1p^{\delta} + \tau_2p^{\delta+1}+ \cdots + \tau_{n-\delta}p^{n-1}},
\end{equation}
and 
\begin{equation}\label{e34}
a_i(c_{\tau_0 + \tau_1p^{\delta} + \tau_2p^{\delta+1}+ \cdots + \tau_{n-\delta}p^{n-1}})=c_{\tau_0 + \tau_1p^{\delta} + \tau_2p^{\delta+1}+ \cdots+\tau_{i-1}p^{i+\delta-2}+((\tau_i+1)\pmod p)p^{i+\delta-1}+\tau_{i+1}p^{i+\delta}+\cdots+ \tau_{n-\delta}p^{n-1}},
\end{equation}
where Equation \eqref{e33} is obtained from Equation \eqref{e31} by choosing $j_0=\tau_1 + \tau_2p+ \cdots + \tau_{n-\delta}p^{n-\delta-1}$, $r_0=\tau_0$ and Equation \eqref{e34} comes from Equation \eqref{e32} by taking $r_i=\tau_i$ and $$j_{i}=\tau_0 + \tau_1p^{\delta} + \tau_2p^{\delta+1}+ \cdots+\tau_{i-1}p^{i+\delta-2}+\tau_{i+1}p^{i+\delta}+\cdots + \tau_{n-\delta}p^{n-1}.$$
Equation \eqref{e33} and Equation \eqref{e34} are employed  $w_1$ and $w_2$ times, respectively, to obtain
\begin{equation}\label{e35}
a_0^{w_1}(c_{\tau_0 + \tau_1p^{\delta} + \tau_2p^{\delta+1}+ \cdots + \tau_{n-\delta}p^{n-1}})=c_{(\tau_0+w_1)\pmod {p^{\delta}} + \tau_1p^{\delta} + \tau_2p^{\delta+1}+ \cdots + \tau_{n-\delta}p^{n-1}},
\end{equation}
and 
\begin{equation}\label{e36}
\begin{split}
&a_i^{w_2}(c_{\tau_0 + \tau_1p^{\delta} + \tau_2p^{\delta+1}+ \cdots + \tau_{n-\delta}p^{n-1}})\\&~=c_{\tau_0 + \tau_1p^{\delta} + \tau_2p^{\delta+1}+ \cdots+\tau_{i-1}p^{i+\delta-2}+((\tau_i+w_2)\pmod p)p^{i+\delta-1}+\tau_{i+1}p^{i+\delta}+\cdots+ \tau_{n-\delta}p^{n-1}},
\end{split}
\end{equation}
where $w_1 \in \{0,1,\ldots,p^{\delta}-1\}$ and $w_2 \in \{0,1,\ldots,p-1\}$.

Now, we aim to show that $a_ha_k(c_t) = a_ka_h(c_t)$ for any $c_t \in \mathbb{F}_q$ and $h,k \in \{0,1,\ldots,n-\delta\}$. The case where $h = k $ is trivial, so without loss of generality assume that $k > h$. It is easy to see that every $t \in \{0, 1, \ldots, q-1=p^n-1\}$ can be expressed as $t = t_0 + t_1p^{\delta} + t_2p^{\delta+1}+ \cdots + t_{n-\delta}p^{n-1}$, where $ t_0 \in \{0,1,\ldots, p^{\delta}-1\}$ and $t_1,t_2, \ldots, t_{n-\delta} \in \{0, 1, \ldots, p-1\}$. 

\textbf{Case 1:} Let $h=0$. In this case, from Equation \eqref{e33}, we have
\[
a_ka_0(c_t)=a_ka_0(c_{t_0 + t_1p^{\delta} + t_2p^{\delta+1}+ \cdots + t_{n-\delta}p^{n-1}})=a_k(c_{(t_0+1)\pmod {p^{\delta}} + t_1p^{\delta} + t_2p^{\delta+1}+ \cdots + t_{n-\delta}p^{n-1}}).
\]
Further using Equation \eqref{e34} for $i=k$, 
\begin{equation*}
\begin{split}
&a_k(c_{(t_0+1)\pmod {p^{\delta}} + t_1p^{\delta} + t_2p^{\delta+1}+ \cdots + t_{n-\delta}p^{n-1}})\\&=c_{(t_0+1)\pmod {p^{\delta}} + t_1p^{\delta} + \cdots+t_{k-1}p^{k+\delta-2}+((t_k+1)\pmod p)p^{k+\delta-1}+ t_{k+1}p^{k+\delta}+\cdots + t_{n-\delta}p^{n-1}}.
\end{split}
\end{equation*}
On the other hand, 
\begin{equation*}
\begin{split}
a_0a_k(c_t)&=a_0a_k(c_{t_0 + t_1p^{\delta} + t_2p^{\delta+1}+ \cdots + t_{n-\delta}p^{n-1}})\\&=a_0(c_{t_0 + t_1p^{\delta} + t_2p^{\delta+1}+ \cdots+t_{k-1}p^{k+\delta-2}+((t_k+1)\pmod p)p^{k+\delta-1}+t_{k+1}p^{k+\delta}+\cdots+ t_{n-\delta}p^{n-1}})\\&=c_{(t_0+1)\pmod {p^{\delta}} + t_1p^{\delta} + \cdots+t_{k-1}p^{k+\delta-2}+((t_k+1)\pmod p)p^{k+\delta-1}+ t_{k+1}p^{k+\delta}+\cdots + t_{n-\delta}p^{n-1}}=a_ka_0(c_t).
\end{split}
\end{equation*}

\textbf{Case 2:} Suppose $ h \geq 1$ . In this case from Equation \eqref{e34},
\begin{equation*}
\begin{split}
&a_ha_k(c_t)=a_ha_k(c_{t_0 + t_1p^{\delta} + \cdots + t_{n-\delta}p^{n-1}})\\&=a_h(c_{t_0 + t_1p^{\delta} + t_2p^{\delta+1}+ \cdots+t_{k-1}p^{k+\delta-2}+((t_k+1)\pmod p)p^{k+\delta-1}+t_{k+1}p^{k+\delta}+\cdots+ t_{n-\delta}p^{n-1}})\\&=c_{w},
\end{split}
\end{equation*}
where $w=t_0 + t_1p^{\delta} +  \cdots+t_{h-1}p^{h+\delta-2}+((t_h+1)\pmod p)p^{h+\delta-1}+t_{h+1}p^{h+\delta}+\cdots+ t_{k-1}p^{k+\delta-2}+((t_k+1)\pmod p)p^{k+\delta-1}+t_{k+1}p^{k+\delta}+\cdots+ t_{n-\delta}p^{n-1}$. Similarly, $a_ka_h(c_t)=c_{w}$. Hence, $a_k a_h=a_h a_k$ for all $h,k\in \{0,1,\ldots,n-\delta\}$, which proves that  $G$ is an Abelian group. Next, we shall show that $|G| = p^n = q$. Suppose that there exist $i_0,i'_0 \in \{0,1,\ldots,p^{\delta}-1\}$ and $i_{\ell}, i'_{\ell} \in \{0, 1, \ldots, p-1\}$, where $1 \leq \ell \leq n-\delta$ such that 
\[
 a_0^{i_0} a_1^{i_1} \cdots a_{n-\delta}^{i_{n-\delta}} = a_0^{i'_0} a_1^{i'_1} \cdots a_{n-\delta}^{i'_{n-\delta}}.
\]
Then
\[
 a_0^{i_0} a_1^{i_1} \cdots a_{n-\delta}^{i_{n-\delta}} (c_{t_0 + t_1p^{\delta} + t_2p^{\delta+1}+ \cdots + t_{n-\delta}p^{n-1}})= a_0^{i'_0} a_1^{i'_1} \cdots a_{n-\delta}^{i'_{n-\delta}}(c_{t_0 + t_1p^{\delta} + t_2p^{\delta+1}+ \cdots + t_{n-\delta}p^{n-1}}).
\]
Using Equations \eqref{e35} and \eqref{e36}, we get
\begin{equation*}
\begin{split}
&c_{(t_0+i_0) \pmod {p^{\delta}}  + ((t_1+i_1)\pmod p) p^{\delta} + ((t_2+i_2)\pmod p)p^{\delta+1}+ \cdots + ((t_{n-\delta}+i_{n-\delta})\pmod p)p^{n-1}}\\&=c_{(t_0+i'_0) \pmod {p^{\delta}}+ ((t_1+i'_1)\pmod p)p^{\delta} + ((t_2+i'_2)\pmod p)p^{\delta+1}+ \cdots + ((t_{n-\delta}+i'_{n-\delta})\pmod p)p^{n-1}}.
\end{split}
\end{equation*}
This gives that 
\[
\begin{cases}
t_0+i_0  \equiv t_0+i'_0  \pmod{p^{\delta}}, \\
t_1+i_1  \equiv  t_1+i'_1 \pmod{p}, \\
t_2+i_2  \equiv t_2+i'_2  \pmod{p}, \\
\vdots \\
t_{n-\delta}+i_{n-\delta} \equiv t_{n-\delta}+i'_{n-\delta} \pmod{p}
\end{cases} \text{ or, equivalently,          }\quad \begin{cases}
i_0  \equiv i'_0  \pmod{p^{\delta}}, \\
i_1  \equiv  i'_1 \pmod{p}, \\
i_2  \equiv i'_2  \pmod{p}, \\
\vdots \\
i_{n-\delta} \equiv i'_{n-\delta} \pmod{p}.
\end{cases}
\]
Since $i_0,i'_0 \in \{0,1,\ldots,p^{\delta}-1\}$ and $i_{\ell}, i'_{\ell} \in \{0, 1, \ldots, p-1\}$ for all $1 \leq \ell \leq n-\delta$, therefore, we have $i_0=i'_0$ and $i_{\ell}= i'_{\ell}$ for all $1 \leq \ell \leq n-\delta$. Thus, $|G|=q$.

Now, it only remains to show that no permutation, except the identity permutation, of $G$ has a fixed point in $\F_q$. Let 
\[
a = a_{0}^{i_{0}} a_{1}^{i_{1}} a_2^{i_{2}}\cdots a_{n-\delta}^{i_{n-\delta}} \neq I
\] 
be an permutation of $G$, where $i_0 \in \{0,1,\ldots,p^{\delta}-1\}$, $i_{1}, i_{2},\ldots,i_{n-\delta} \in \{0, 1, \ldots, p-1\}$ and $I$ is the identity permutation of $\mathfrak{S}_q$.  We shall show that $a$ does not fix any element of $\mathbb{F}_q$.  On the contrary, let us assume that $a$ fixes $c_t$ , where $t= t_0 + t_1p^{\delta} + t_2p^{\delta+1}+ \cdots + t_{n-\delta}p^{n-1}$ for some $t_0 \in \{0,1,\ldots,p^{\delta}-1\}$ and $t_1,t_2,\ldots,t_{n-\delta} \in \{0,1,\ldots,p-1\}$, that is,
\[
a_{0}^{i_{0}} a_{1}^{i_{1}} a_2^{i_{2}}\cdots a_{n-\delta}^{i_{n-\delta}}(c_{t_0 + t_1p^{\delta} + t_2p^{\delta+1}+ \cdots + t_{n-\delta}p^{n-1}})=c_{t_0 + t_1p^{\delta} + t_2p^{\delta+1}+ \cdots + t_{n-\delta}p^{n-1}}.
\]
Now, by applying Equations \eqref{e35} and \eqref{e36}, we obtain
\begin{equation*}
\begin{split}
&c_{(t_0+i_0) \pmod {p^{\delta}} + ((t_1+i_1)\pmod p)p^{\delta} + ((t_2+i_2)\pmod p)p^{\delta+1}+ \cdots + ((t_{n-\delta}+i_{n-\delta})\pmod p)p^{n-1}}\\&= c_{t_0 + t_1p^{\delta} + t_2p^{\delta+1}+ \cdots + t_{n-\delta}p^{n-1}},
\end{split}
\end{equation*}
which gives that $i_k=0$ for $0\leq k \leq n-\delta$, i.e., $a=I$. This is a contradiction, which completes the proof.
\end{proof}

Now, we will establish that our permutation group polynomials are not equivalent with known ones. We classify them with respect to the following equivalence relation that is used in \cite{JJ_2023}.

\begin{defn}\label{D31}\cite[Definition 13]{JJ_2023}
Two permutation polynomial tuples $\Omega$ and $\Gamma$ are said to be equivalent if there exist
$\sigma, \lambda \in \mathfrak{S}_q$ such that $\sigma \Omega \lambda = \Gamma$.
Analogously, we say that two local permutation polynomials $f$ and $g$ are equivalent if the
corresponding permutation polynomial tuples $\underline{\beta}_f=(\beta_0, \ldots, \beta_{q-1})$ and $\underline{\gamma}_g=(\gamma_0, \ldots, \gamma_{q-1})$ are equivalent.
\end{defn}

To the best of our knowledge, there are only four classes of permutation group polynomials \cite{JJ_2023, HK}, and their corresponding groups have at most two generators. 

\begin{rmk}\label{R31}
In the proof of \cite[Theorem 3.18]{HK}, it is shown that if two permutation group polynomials are equivalent to each other, then their corresponding groups are conjugates to each other in \( \mathfrak{S}_q \).
\end{rmk}

\begin{prop}\label{P31}
Let $q = p^n$, where $p$ is a prime number and $n   \ge 4 $ is a positive integer. Then the permutation group polynomial $f \in \mathbb{F}_q[X_1, X_2]$ corresponding to $G$ defined in Theorem \ref{T31} is not equivalent to the known families of permutation group polynomials.
\end{prop}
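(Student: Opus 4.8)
The plan is to prove nonequivalence by working with the group $G$ directly, using Remark~\ref{R31} to reduce everything to a group-theoretic invariant. By that remark, if $f$ were equivalent to a known permutation group polynomial, then $G$ would have to be conjugate in $\mathfrak{S}_q$ to the group associated with that known polynomial. Conjugate subgroups of $\mathfrak{S}_q$ are in particular isomorphic as abstract groups, so it suffices to exhibit an isomorphism-type invariant of $G$ that none of the four known families share. The most natural such invariant is the isomorphism type of $G$ as an abelian $p$-group, recorded by its elementary divisors.

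First I would determine the isomorphism type of $G$. From the presentation in Theorem~\ref{T31}, the generator $a_0$ has order $p^{\delta}$ (its cycles all have length $p^{\delta}$), while each $a_i$ for $1 \le i \le n-\delta$ has order $p$. The commutativity established in the proof of Theorem~\ref{T31}, together with the order count $|G| = q = p^n$ and the fact that the exponents $(i_0, i_1, \dots, i_{n-\delta})$ parametrize $G$ bijectively, shows that
\[
G \;\cong\; \mathbb{Z}/p^{\delta}\mathbb{Z} \;\times\; \underbrace{\mathbb{Z}/p\mathbb{Z} \times \cdots \times \mathbb{Z}/p\mathbb{Z}}_{n-\delta \text{ copies}}.
\]
For $\delta \in \{1,2\}$ this gives two abelian groups whose invariant factor decompositions are determined by $\delta$ and $n$. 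Next I would record the isomorphism types of the four known families: the $e$-Klenian groups of Lemma~\ref{JJ_tpl} are $\langle a \rangle \times \langle b \rangle \cong \mathbb{Z}/\ell\mathbb{Z} \times \mathbb{Z}/t\mathbb{Z}$ with $\ell = p^e$, $t = q/\ell$, and the three families of Hasan--Kumar~\cite{HK} likewise have at most two generators, so each is a product of at most two cyclic $p$-groups.

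The decisive step is then a comparison of elementary-divisor data. A finite abelian $p$-group that is a product of at most two cyclic factors has rank at most $2$ (i.e.\ its $p$-torsion subgroup $\{x : px = 0\}$ has order at most $p^2$). By contrast, for $\delta = 1$ the group $G$ is elementary abelian of rank $n$, and for $\delta = 2$ it has rank $n-1$; since $n \ge 4$, in both cases the rank of $G$ is at least $3$. Hence $G$ cannot be isomorphic to any group generated by at most two elements, so $G$ is not conjugate in $\mathfrak{S}_q$ to any of the four known groups, and by Remark~\ref{R31} the polynomial $f$ is not equivalent to any known permutation group polynomial.

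The main obstacle I anticipate is bookkeeping rather than conceptual: I must verify carefully that the abstract rank computation is faithfully reflected in the known families, i.e.\ that each of the four known groups really is a product of at most two cyclic groups (this is asserted in the line ``their corresponding groups have at most two generators,'' so I can lean on it), and I must confirm the orders of $a_0$ and of the $a_i$ precisely so that the rank of $G$ is computed correctly for each value of $\delta$. A secondary subtlety is that the hypothesis $n \ge 4$ is exactly what forces $\mathrm{rank}(G) \ge 3$ even in the worst case $\delta = 2$ (where $\mathrm{rank}(G) = n-1 \ge 3$); I would make sure the argument uses this bound and flag that for smaller $n$ the rank could drop to $2$ and coincidences with known families could occur, which is presumably why the proposition restricts to $n \ge 4$.
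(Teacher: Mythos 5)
Your proposal is correct and is essentially the paper's own proof: the paper also invokes Remark~\ref{R31} and concludes from the fact that $G$ needs more than two generators for $n \ge 4$, which is exactly your rank argument (rank $n$ for $\delta=1$, rank $n-1$ for $\delta=2$, both at least $3$) spelled out in detail. One tiny caveat: your aside that a $2$-generated group must be ``a product of at most two cyclic $p$-groups'' is not needed and would be false for non-abelian groups; your actual conclusion only uses that an abelian $p$-group of rank $\ge 3$ cannot be $2$-generated, which is all the argument requires.
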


\begin{proof}
The proof follows from Remark \ref{R31}, as $G$ has more than two generators for $n \geq 4$.
\end{proof}

In the following theorem, we provide an explicit construction of the companions for the permutation group polynomials determined in Theorem \ref{T31}.  Recall that every $t \in \{0, 1, \ldots, q-1\}$ can be expressed as \( t = t_0 + t_1p^{\delta} + t_2p^{\delta+1} + \cdots + t_{n-\delta}p^{n-1} \), where $t_0 \in \{0,1,\ldots,p^{\delta}-1\}$, \( t_1, t_2 \ldots, t_{n-\delta} \in \{0, 1, \ldots, p-1\} \) and $\delta \in \{1,2\}$.
\begin{thm}\label{T33}
Let $q = p^n$, $n\ge 2$ is a positive integer and $p$ is a prime number, and $f \in \F_q[X_1, X_2]$ be a permutation group polynomial associated with the group $G$ constructed in Theorem \ref{T31}. Let $\underline{\beta}_{f} = (\beta_0, \beta_1, \ldots, \beta_{q-1})$ be the permutation polynomial tuple corresponding to $f$, where $\beta_{t} = a_0^{t_0}a_1^{t_1} \cdots a_{n-\delta}^{t_{n-\delta}}$ for $
t = t_0 + t_1 p^{\delta} + t_2 p^{\delta+1} + \cdots + t_{n-\delta} p^{n-1}$ with $t_0 \in \{0,1,\ldots,p^{\delta}-1\}$ and $t_1,t_2,\ldots,t_{n-\delta} \in \{0,1,\ldots,p-1\}$. Moreover, define  $h :\F_q \rightarrow \F_q$ as follows
\[
h(c_{t})=\displaystyle
\begin{cases}
a_0^{t_0}a_1^{t_1}a_2^{t_2}\cdots a_{n-\delta}^{t_{n-\delta}}(c_{t}) & \text{ if } p \text{ is odd},\\[0.3cm]
a_0^{t_{n-1}}a_1^{t_0 + t_{n-1}}a_2^{t_1}a_3^{t_2}\cdots a_{n-1}^{t_{n-2}}(c_{t}) & \text{ if } p=2, \delta=1,\\[0.3cm]
a_0^{t_{n-2} + 2t_{n-3}} a_1^{t_{00} + S_1^t-t_1} a_2^{t_{01} +S_2^t-t_2} a_3^{t_1} a_4^{t_2} \cdots a_{n-2}^{t_{n-4}}(c_{t}) & \text{ if } p=2, \delta=2, 
\end{cases}
\]
 where for the case $p=2$ and $\delta=2$ above, $n \geq 5$, $t_0 = t_{00} + 2t_{01}$ for some unique $t_{00}, t_{01} \in \mathbb{Z}_2$,  and \[
S_1^{t}=\displaystyle \sum_{k=1}^{\left \lfloor \frac{n-3}{2} \right \rfloor}(t_{2k-1} + t_{2k+1}) \pmod 2, \quad S_2^{t}=\displaystyle \sum_{k=1}^{\left \lfloor \frac{n-4}{2} \right \rfloor}(t_{2k} + t_{2k+2}) \pmod 2.
\]
Then the bivariate local permutation polynomial $g(X_1,X_2)$ corresponding to the permutation polynomial tuple $(h\beta_0, h\beta_1, \ldots, h\beta_{q-1})$ is a companion of $f$.
\end{thm}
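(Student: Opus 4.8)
The plan is to verify the two hypotheses of Lemma~\ref{L21}, namely that $h$ is a permutation of $\F_q$ and that $h$ intersects $f$ simply; the conclusion that $g$ is a companion of $f$ is then immediate. The starting point is the observation, extracted from Equations~\eqref{e35} and~\eqref{e36}, that an arbitrary element $a_0^{e_0}a_1^{e_1}\cdots a_{n-\delta}^{e_{n-\delta}}$ of $G$ acts on $c_t$ by adding its exponent vector coordinatewise to the digits of $t$: the $0$-th digit is shifted by $e_0 \pmod{p^\delta}$ and the $j$-th digit by $e_j \pmod p$ for $1\le j\le n-\delta$. Thus, writing $\underline t=(t_0,\dots,t_{n-\delta})$ for the digit vector of $t$, each defining formula expresses $h(c_t)=g_t(c_t)$, where $g_t\in G$ is the displayed product and its exponent vector is a prescribed function $L(\underline t)$ of $\underline t$. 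Consequently $h$ corresponds to the index map $M(\underline t)=\underline t+L(\underline t)$, again taken coordinatewise, modulo $p^\delta$ in the $0$-th slot and modulo $p$ elsewhere.

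The first key step is to reduce the simple-intersection requirement to a bijectivity statement about $L$. Fix $i$ with digit vector $\underline i$, so that $\beta_i$ has exponent vector $\underline i$. For each $t$ we have $h(c_t)=\beta_i(c_t)$ if and only if $(\beta_i^{-1}g_t)(c_t)=c_t$; since $\beta_i^{-1}g_t\in G$ and, by Theorem~\ref{T31}, the only element of $G$ with a fixed point is the identity, this holds precisely when $g_t=\beta_i$, i.e. when $L(\underline t)=\underline i$. Hence the number of points where the graphs of $h$ and $\beta_i$ meet equals $|L^{-1}(\underline i)|$, so $h$ intersects $f$ simply if and only if $L$ is a bijection on digit vectors. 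In the same language, $h$ is a permutation of $\F_q$ if and only if $M=\mathrm{id}+L$ is a bijection. Thus the entire theorem reduces to checking that, in each of the three cases, both $L$ and $\mathrm{id}+L$ are bijections.

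For odd $p$ the definition gives $g_t=\beta_t$, so $L=\mathrm{id}$ (trivially bijective) and $M=2\cdot\mathrm{id}$, which is a bijection because $\gcd(2,p^\delta)=1$; this is exactly where oddness of $p$ is used. When $p=2$ the map $2\cdot\mathrm{id}$ degenerates, which is why the two even cases reroute the higher digits into the low slots. For $\delta=1$ every digit lies in $\mathbb F_2$, so $L$ and $M$ are $\mathbb F_2$-linear, and I would verify invertibility directly: one reads the digits $t_{n-1},t_1,\dots,t_{n-2}$ off the output of $L$ and then recovers $t_0$, while a short induction shows that $M(\underline t)=\underline 0$ forces $\underline t=\underline 0$, so $M$ is invertible as well.

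The main obstacle is the case $p=2,\ \delta=2$ (with $n\ge 5$), where the $0$-th slot is a $\mathbb Z_4$-coordinate, written $t_0=t_{00}+2t_{01}$, and the exponents of $a_1,a_2$ carry the alternating telescoping sums $S_1^t,S_2^t$. Because the $a_0$-action is addition modulo $4$, the maps $L$ and $M$ are no longer $\mathbb F_2$-linear---the $\mathbb Z_4$ carry couples $t_{00}$ and $t_{01}$---so I would not argue by determinants but instead construct explicit inverses. The telescoping sums are engineered so that, even though doubling fails in characteristic $2$, both maps stay bijective: using the relations defining $S_1^t,S_2^t$ one first recovers the chain $t_1,t_2,\dots$ from the output coordinates $s_3,s_4,\dots$, then untangles the even- and odd-indexed subsums to solve for the remaining low digits, and finally resolves the $\mathbb Z_4$-slot, including its carry, from the congruences governing $s_0,s_1,s_2$. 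Once $L$ and $M=\mathrm{id}+L$ are shown bijective in all three cases, Lemma~\ref{L21} immediately yields that the polynomial $g$ associated with $(h\beta_0,\dots,h\beta_{q-1})$ is a companion of $f$.
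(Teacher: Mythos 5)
Your proposal is correct and follows essentially the same route as the paper: reduction via Lemma~\ref{L21}, use of the fixed-point-freeness of $G$ to convert simple intersection of $h$ with each $\beta_i$ into bijectivity of the exponent map $L$, and digit-wise verification that $L$ and $\mathrm{id}+L$ are bijections in the same three cases (the paper phrases the latter as injectivity, deriving and solving exactly the systems of congruences your maps encode). One caveat in your $p=2$, $\delta=2$ sketch: the chains $t_1,t_3,\ldots$ and $t_2,t_4,\ldots$ cannot be recovered \emph{first} from the output coordinates $s_3,s_4,\ldots$ alone, since consecutive sums determine each interleaved chain only up to an additive constant; the inversion must instead begin by summing those coordinates to obtain $S_1^t$ and $S_2^t$, then extract $t_{00},t_{01}$ from $s_1,s_2$, then the anchors $t_{n-2},t_{n-3}$ from the $\mathbb{Z}_4$-slot, and only then walk the chains backwards --- this is precisely the summation trick in the paper's proof, so your plan needs only this reordering, not a new idea.
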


\begin{proof}
From Lemma \ref{L21}, to prove $g(X_1,X_2)$ is a companion of $f(X_1,X_2)$, it suffices to show that  $h \in \F_q[X]$ is a permutation polynomial of $\F_q$ and it intersects $f$ simply. We split our proof in the following cases.

\textbf{Case 1:} Let $p$ be an odd prime. In this case, we have
\[
h(c_{t_0+t_1p^{\delta}+t_2p^{\delta+1}+\cdots+t_{n-\delta}p^{n-1}})=a_0^{t_0}a_1^{t_1}a_2^{t_2}\cdots a_{n-\delta}^{t_{n-\delta}}(c_{t_0+t_1p+t_2p^2+\cdots+t_{n-\delta}p^{n-1}}),
\]
where  $t_0\in \{0,1,\ldots,p^{\delta}-1\}$ and $t_{\ell} \in \{0,1,\ldots,p-1\}$ for all $1 \leq \ell \leq  n-\delta$. By definition of $h$, it is clear that $h$ intersects $f$ simply. Now, we shall show that $h$ is a permutation. Suppose, to the contrary, that
\[
h(c_{t_0+t_1p^{\delta}+t_2p^{\delta+1}+\cdots+t_{n-\delta}p^{n-1}}) = h(c_{t'_0+t'_1p^{\delta}+t'_2p^{\delta+1}+\cdots+t'_{n-\delta}p^{n-1}}),
\]
for some \( (t_0,t_1,\ldots,t_{n-\delta}) \neq (t'_0,t'_1,\ldots,t'_{n-\delta}) \), where $t_0,t'_0 \in \{0,1,\ldots,p^{\delta}-1\}$ and \( t_{\ell}, t'_{\ell} \in \{0,1,\ldots,p-1\} \) for all \( 1 \leq \ell \leq  n-\delta \).
This implies that
\[
a_0^{t_0}a_1^{t_1}a_2^{t_2}\cdots a_{n-\delta}^{t_{n-\delta}}(c_{t_0+t_1p^{\delta}+t_2p^{\delta+1}+\cdots+t_{n-\delta}p^{n-1}}) =
a_0^{t'_0}a_1^{t'_1}a_2^{t'_2}\cdots a_{n-\delta}^{t'_{n-\delta}}(c_{t'_0+t'_1p^{\delta}+t'_2p^{\delta+1}+\cdots+t'_{n-\delta}p^{n-1}}).
\]
From Equations \eqref{e35} and \eqref{e36}, we obtain
\begin{equation*}
\begin{split}
&c_{2t_0 \pmod {p^{\delta}} + (2t_1 \pmod p) p^{\delta} + (2t_2 \pmod p)p^{\delta+1} + \cdots + (2t_{n-\delta} \pmod p)p^{n-1}}\\
 & =c_{2t'_0 \pmod {p^{\delta}} + (2t'_1 \pmod p) p^{\delta} + (2t'_2 \pmod p)p^{\delta+1} + \cdots + (2t'_{n-\delta} \pmod p)p^{n-1}}.
\end{split}
\end{equation*}
The above equality implies that $2t_{0} \equiv 2t'_{0} \pmod{p^{\delta}}$ and $2t_{\ell} \equiv 2t'_{\ell} \pmod{p}$ for all $1 \leq  \ell \leq  n-\delta$. Since $t_0,t'_0 \in \{0,1,\ldots,p^{\delta}-1\}$, \( t_{\ell}, t'_{\ell} \in \{0,1,\ldots,p-1\} \) for $ 1\leq \ell \leq n-\delta$ and \( \gcd(2, p) = 1 \), we deduce that $t_0=t'_0$ and \( t_{\ell} = t'_{\ell} \) for all \( \ell \). Hence, \( (t_0,t_1,\ldots,t_{n-\delta}) = (t'_0,t'_1,\ldots,t'_{n-\delta}) \), which is a contradiction. Therefore, $h$ is a permutation and $g(X_1,X_2)$ is a companion of $f$.

\textbf{Case 2:} For $p = 2$, we consider two subcases, corresponding to the values $\delta=1$ and $\delta=2$. 

\textbf{Subcase 2.1 :} Let $\delta=1$. In this case, we can write every $t \in \{0,1,\ldots,p^n-1\}$ as $t=t_0 + 2t_1 + 2^2t_2 + \cdots + 2^{n-1}t_{n-1}$ for some $t_0,t_1,\ldots,t_{n-1} \in \{0,1\}$ and by definition of $h$, we have
\[
h(c_{t_0 + 2t_1 + 2^2t_2 + \cdots + 2^{n-1}t_{n-1}}) = a_0^{t_{n-1}}a_1^{t_0 + t_{n-1}}a_2^{t_1}a_3^{t_2}\cdots a_{n-1}^{t_{n-2}}(c_{t_0 + 2t_1 + 2^2t_2 + \cdots + 2^{n-1}t_{n-1}}).
\]
If \( (t_0, t_1, t_2, \ldots, t_{n-1}) \neq (t'_0, t'_1, t'_2, \ldots, t'_{n-1}) \in \mathbb{Z}_2^n \), where \(\mathbb{Z}_2 =\{0,1\} \), then 
\[ (t_{n-1}, t_0 + t_{n-1}, t_1, t_2, \ldots, t_{n-2}) \neq (t'_{n-1}, t'_0 + t'_{n-1}, t'_1, t'_2, \ldots, t'_{n-2}) .\] Thus, \( h \) intersects \( f \) simply. Now, we aim to show that \( h \) is a permutation. Suppose \( (t_0, t_1, t_2, \ldots, t_{n-1}) \neq (t'_0, t'_1, t'_2, \ldots, t'_{n-1}) \in \mathbb{Z}_2^n \) such that 
\[
h(c_{t_0 + 2t_1 + 2^2t_2 + \cdots + 2^{n-1}t_{n-1}}) = h(c_{t'_0 + 2t'_1 + 2^2t'_2 + \cdots + 2^{n-1}t'_{n-1}}).
\]
From the Equations \eqref{e35} and \eqref{e36} for $\delta=1$, we have
\begin{equation*}
\begin{split}
&c_{(t_0 + t_{n-1}) \pmod 2 + 2((t_1 + t_0 + t_{n-1}) \pmod 2) + 2^2((t_2 + t_1) \pmod 2) +  \cdots + 2^{n-1}((t_{n-1} + t_{n-2}) \pmod 2)} \\
&= c_{(t'_0 + t'_{n-1}) \pmod 2 + 2((t'_1 + t'_0 + t'_{n-1}) \pmod 2) + 2^2((t'_2 + t'_1) \pmod 2) +  \cdots + 2^{n-1}((t'_{n-1} + t'_{n-2}) \pmod 2)}.
\end{split}
\end{equation*}
This implies
\[
\begin{cases}
t_0 + t_{n-1} \equiv t'_0 + t'_{n-1} \pmod{2}, \\
t_1 + t_0 + t_{n-1} \equiv t'_1 + t'_0 + t'_{n-1} \pmod{2}, \\
t_2 + t_1 \equiv t'_2 + t'_1 \pmod{2}, \\
t_3 + t_2 \equiv t'_3 + t'_2 \pmod{2}, \\
\vdots \\
t_{n-1} + t_{n-2} \equiv t'_{n-1} + t'_{n-2} \pmod{2}.
\end{cases}
\]

Adding the first two equations, we obtain \( t_1 \equiv t'_1 \pmod{2} \), which implies \( t_1 = t'_1 \). Substituting \( t_1 = t'_1 \) into the third equation, we get \( t_2 = t'_2 \). Continuing this process, we find \( t_3 = t'_3, t_4 = t'_4, \ldots, t_{n-1} = t'_{n-1}\). Finally, substituting \( t_{n-1} = t'_{n-1} \) into the first equation, we get \( t_0 = t'_0 \). Thus, \( c_{t_0 + 2t_1 + 2^2t_2 + \cdots + 2^{n-1}t_{n-1}} = c_{t'_0 + 2t'_1 + 2^2t'_2 + \cdots + 2^{n-1}t'_{n-1}} \), which shows that \( h \) is a permutation when \( p = 2 \) and $\delta=1$.

\textbf{Subcase 2.2 :} Now we assume that $\delta=2$. Then every \( t \in \{0, 1, \ldots, q-1\} \) can be written as 
\[
t = t_0 + 2^2t_1  + 2^3t_2 + \cdots + 2^{n-1}t_{n-2},
\]
where $ t_0 \in \{0, 1,2,3\}$ and $t_1, \ldots, t_{n-2} \in \{0, 1\}$. Note that \( t_0 \) can be written as \( t_0 = t_{00} + 2t_{01} \) for some unique \( t_{00}, t_{01} \in \mathbb{Z}_2 \) and in this case, 
\[
h(c_{t}) = a_0^{t_{n-2} + 2t_{n-3}} a_1^{t_{00} + S_1^t-t_1} a_2^{t_{01} +S_2^t-t_2} a_3^{t_1} a_4^{t_2} \cdots a_{n-2}^{t_{n-4}}(c_{t}),
\]
where $
t = t_{00} + 2t_{01} + 2^2t_1 + 2^3t_2 + \cdots + 2^{n-2}t_{n-3} + 2^{n-1}t_{n-2}$,
\[
S_1^{t}=\displaystyle \sum_{k=1}^{\left \lfloor \frac{n-3}{2} \right \rfloor}(t_{2k-1} + t_{2k+1}) \pmod 2, \quad S_2^{t}=\displaystyle \sum_{k=1}^{\left \lfloor \frac{n-4}{2} \right \rfloor}(t_{2k} + t_{2k+2}) \pmod 2.
\]
It is straightforward to verify that for distinct choices of the tuple \( (t_{00}, t_{01}, t_1, t_2, \ldots, t_{n-2}) \in \mathbb{Z}_2^n \), the tuple \( (t_{n-2} + 2t_{n-3}, t_{00} + S^t_1-t_1, t_{01} +S^t_2-t_2, t_1, t_2, \ldots, t_{n-4}) \in \mathbb{Z}_4 \times \mathbb{Z}_2 \times \mathbb{Z}_2 \times \cdots \times \mathbb{Z}_2 \) will also be distinct. Therefore, \( h \) intersects \( f \) simply. We now show that \( h \) is a permutation. Assume, for the sake of contradiction, that there are  $t_{00} + 2t_{01} + 2^2t_1 + 2^3t_2 + \cdots + 2^{n-2}t_{n-3} + 2^{n-1}t_{n-2}=t \neq  t' = t'_{00} + 2t'_{01} + 2^2t'_1 + 2^3t'_2 + \cdots + 2^{n-2}t'_{n-3} + 2^{n-1}t'_{n-2}$ such that
\[
h(c_{t_{00} + 2t_{01} + 2^2t_1 + 2^3t_2 + \cdots + 2^{n-2}t_{n-3} + 2^{n-1}t_{n-2}}) = h(c_{t'_{00} + 2t'_{01} + 2^2t'_1 + 2^3t'_2 + \cdots + 2^{n-2}t'_{n-3} + 2^{n-1}t'_{n-2}}).
\]
Using the definition of \( h \) and Equations \eqref{e35},  \eqref{e36} for $\delta=2$, we obtain
\[
c_{s_1} = c_{s_2},
\]
where
\begin{align*}
s_1 = &(t_{00} + 2t_{01} + t_{n-2} + 2t_{n-3}) \pmod 4 +((t_{00}+S^t_1) \pmod 2)2^2+((t_{01}+S^t_2) \pmod 2)2^3\\
& + \sum_{k=1}^{\left \lfloor \frac{n-3}{2} \right \rfloor} ((t_{2k-1} + t_{2k+1}) \pmod 2)2^{2k+2} +\sum_{k=1}^{\left \lfloor \frac{n-4}{2} \right \rfloor} ((t_{2k} + t_{2k+2}) \pmod 2)2^{2k+3},
\end{align*}
and
\begin{align*}
s_2 = &(t'_{00} + 2t'_{01} + t'_{n-2} + 2t'_{n-3}) \pmod 4 +((t'_{00}+S_1^{t'}) \pmod 2)2^2+((t'_{01}+S_2^{t'}) \pmod 2)2^3\\
& + \sum_{k=1}^{\left \lfloor \frac{n-3}{2} \right \rfloor} ((t'_{2k-1} + t'_{2k+1}) \pmod 2)2^{2k+2} +\sum_{k=1}^{\left \lfloor \frac{n-4}{2} \right \rfloor} ((t'_{2k} + t'_{2k+2}) \pmod 2)2^{2k+3}.
\end{align*}
This implies the following system of congruences
\[
\begin{cases}
t_{00} + 2t_{01} + t_{n-2} + 2t_{n-3} \equiv t'_{00} + 2t'_{01} + t'_{n-2} + 2t'_{n-3} \pmod 4, 
\\t_{00} +S_1^{t} \equiv t'_{00} + S_1^{t'} \pmod 2, \\
t_{01} + S_2^{t} \equiv  t'_{01} + S_2^{t'} \pmod 2, \\
t_{2k-1} + t_{2k+1} \equiv t'_{2k-1} + t'_{2k+1} \pmod 2, \text{ for }1\leq k \leq \left \lfloor \frac{n-3}{2} \right \rfloor  \\
t_{2k} + t_{2k+2} \equiv t'_{2k} + t'_{2k+2} \pmod 2, \text{ for }1 \leq k \leq \left \lfloor \frac{n-4}{2} \right \rfloor.
\end{cases}
\]
Next, by summing over $k = 1$ to $\left \lfloor \frac{n-3}{2} \right \rfloor$ in the 4th equation and $k = 1$ to $\left \lfloor \frac{n-4}{2} \right \rfloor$ in the 5th equation of the above system, we get 
\[
S_1^{t}=\displaystyle \sum_{k=1}^{\left \lfloor \frac{n-3}{2} \right \rfloor}(t_{2k-1} + t_{2k+1}) \pmod 2 \equiv \sum_{k=1}^{\left \lfloor \frac{n-3}{2} \right \rfloor}(t'_{2k-1} + t'_{2k+1}) \pmod 2=S_1^{t'},
\]
and
\[
\displaystyle S_2^{t}=\sum_{k=1}^{\left \lfloor \frac{n-4}{2} \right \rfloor}(t_{2k} + t_{2k+2}) \equiv \sum_{k=1}^{\left \lfloor \frac{n-4}{2} \right \rfloor}(t'_{2k} + t'_{2k+2}) \pmod 2=S_2^{t'}.
\]
Using these values in the second and third equations of the above system, we obtain \( t_{00} = t'_{00} \) and \( t_{01} = t'_{01} \), as \( t_{00}, t'_{00}, t_{01}, t'_{01} \in \mathbb{Z}_2 \). Substituting these values into the first equation, we have 
\[
t_{n-2} \equiv t'_{n-2} \pmod 2,
\]
which gives \( t_{n-2} = t'_{n-2} \). Additionally, 
\[
2t_{n-3} \equiv 2t'_{n-3} \pmod 4,
\]
implies \( t_{n-3} = t'_{n-3} \).

If $n$ is even, then substitute  \( t_{n-3} = t'_{n-3} \) and \( t_{n-2} = t'_{n-2} \) into the 4th and 5th equations for \( k =\left \lfloor \frac{n-4}{2} \right \rfloor=\left \lfloor \frac{n-3}{2} \right \rfloor= \frac{n-4}{2} \), we find 
\[
t_{n-5} = t'_{n-5}, \quad \text{and} \quad t_{n-4} = t'_{n-4}.
\]

Continuing this process and using \( k = \frac{n-2\ell}{2} \) in the 4th and 5th equations, we deduce 
\[
t_{n-(2\ell+1)} = t'_{n-(2\ell+1)} \quad\text{and} \quad t_{n-2\ell} = t'_{n-2\ell},
\]
where \( 3 \leq \ell \leq \frac{n-2}{2} \). Therefore, 
\[
(t_{00}, t_{01}, t_1, t_2, \ldots, t_{n-2}) = (t'_{00}, t'_{01}, t'_1, t'_2, \ldots, t'_{n-2}),
\]
when $n$ is even. Similarly one can obtain that \(
(t_{00}, t_{01}, t_1, t_2, \ldots, t_{n-2}) = (t'_{00}, t'_{01}, t'_1, t'_2, \ldots, t'_{n-2})
\) when $n$ is odd. This implies that
\( h \) is a permutation, which completes the proof.

\end{proof}

\section{Enumeration of certain permutation group polynomials}\label{S4}
In \cite{JJ_2023}, Gutiérrez and Urroz posed the problem of enumerating $e$-Klenian polynomials, that is, determining the number of permutation group polynomials whose associated groups are subgroups of $\mathfrak{S}_q$ with the structure described in \cite[Corollary 17]{JJ_2023}. This problem highlights the broader challenge of counting permutation group polynomials, which has deep connections with group theory and combinatorics.
Motivated by this question, we consider a more general enumeration problem: given a subgroup $G < \mathfrak{S}_q$, determine the number of permutation group polynomials whose associated groups are subgroups of $\mathfrak{S}_q$ and isomorphic to $G$. Equivalently, we seek to count all permutation group polynomials whose associated groups are subgroups  of $\mathfrak{S}_q$ of the form $G$. For convenience, we shall henceforth refer that two permutation group polynomials are of the same form if their associated groups, as subgroups of $\mathfrak{S}_q$, are isomorphic. In this section, we enumerate all permutation group polynomials of the form determined in Section~\ref{S3} and provide the number of permutation group polynomials that are equivalent to this family. We then solve the problem of counting of $e$-Klenian polynomials over $\mathbb{F}_q$ for $e \geq 1$. At the end of this section, we give the number of permutation group polynomials equivalent to $e$-Klenian polynomials.

To enumerate all the permutation group polynomials of the form presented in Theorem \ref{T31}, we first prove the following lemmas.

 \begin{lem}\label{conjugate_lemma}
Let $g(X_1,X_2) \in \F_q[X_1,X_2]$ be a permutation group polynomial and its corresponding group $H < \mathfrak{S}_q$ be isomorphic to $G$, where $G$ is defined in Theorem \ref{T31}. Then there exists $\sigma \in \mathfrak{S}_q$ such that $G=\sigma H \sigma^{-1}$.
\end{lem}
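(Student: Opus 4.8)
The plan is to show that both $G$ and $H$ act \emph{regularly} on $\F_q$ and then to invoke the classical fact that any two regular permutation representations of isomorphic groups are conjugate inside the ambient symmetric group. The content of the lemma is really just this uniqueness, specialized to our situation.

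First I would observe that, since $H$ corresponds to a permutation group polynomial, $H < \mathfrak{S}_q$ has order $q$ and no non-identity element of $H$ fixes a point of $\F_q$; the same holds for $G$ by Theorem \ref{T31}. Hence every point stabiliser in $H$ (and in $G$) is trivial, so by the orbit--stabiliser theorem each $H$-orbit on $\F_q$ has size $|H| = q = |\F_q|$. Thus $H$ has a single orbit and acts transitively, so its action on $\F_q$ is regular; likewise for $G$. In particular, fixing the base point $c_0 \in \F_q$, the evaluation maps $H \to \F_q,\ h \mapsto h(c_0)$ and $G \to \F_q,\ a \mapsto a(c_0)$ are both bijections.

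Next, using the hypothesis $H \cong G$, I would fix an abstract group isomorphism $\phi : H \to G$ and define $\sigma \in \mathfrak{S}_q$ by the rule $\sigma(h(c_0)) := \phi(h)(c_0)$ for each $h \in H$. This is well defined and bijective precisely because the two evaluation maps above are bijections; concretely $\sigma$ is the composite $\F_q \xrightarrow{\sim} H \xrightarrow{\phi} G \xrightarrow{\sim} \F_q$. It then remains to verify the conjugation identity $\sigma h_0 \sigma^{-1} = \phi(h_0)$ for every $h_0 \in H$. For this, take $y \in \F_q$, write $y = \phi(h)(c_0)$ so that $\sigma^{-1}(y) = h(c_0)$, and compute, using that the action is by composition of permutations together with the homomorphism property of $\phi$, that $\sigma\bigl(h_0(h(c_0))\bigr) = \sigma\bigl((h_0 h)(c_0)\bigr) = \phi(h_0 h)(c_0) = \phi(h_0)\bigl(\phi(h)(c_0)\bigr) = \phi(h_0)(y)$. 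This yields $\sigma h_0 \sigma^{-1} = \phi(h_0)$ for all $h_0 \in H$, whence $\sigma H \sigma^{-1} = \phi(H) = G$, as required.

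The computations are entirely routine; the only points needing care are checking that regularity forces the evaluation map at $c_0$ to be a bijection (so that $\sigma$ is well defined) and keeping track of the direction of $\phi$ so that $\sigma H \sigma^{-1}$ lands on $G$ rather than on $H$. I do not expect a substantial obstacle here: once regularity is established, the existence of the conjugating permutation is forced by the standard fact that the regular representation of a group is unique up to conjugacy in the symmetric group.
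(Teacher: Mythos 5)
Your proof is correct, and it takes a genuinely different route from the paper's. You first note that $G$ and $H$ are order-$q$ subgroups of $\mathfrak{S}_q$ whose non-identity elements are fixed-point-free, hence both act regularly on $\F_q$ (orbit--stabiliser plus $|H|=q=|\F_q|$ forces a single orbit), and you then reprove, in this concrete setting, the classical fact that regular permutation representations of abstractly isomorphic groups are conjugate in the ambient symmetric group: fixing an isomorphism $\phi\colon H\to G$, the map $\sigma(h(c_0)):=\phi(h)(c_0)$ is a well-defined permutation of $\F_q$, and the computation $\sigma\bigl(h_0h(c_0)\bigr)=\phi(h_0)\bigl(\phi(h)(c_0)\bigr)$ gives $\sigma h_0\sigma^{-1}=\phi(h_0)$, so $\sigma H\sigma^{-1}=\phi(H)=G$. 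The paper instead argues by a long explicit induction on the number of generators of $G$: it matches cycle types to conjugate $b_0$ to $a_0$, then painstakingly determines the admissible cycle structure of each remaining generator $b_i'$ (using commutativity with the already-aligned generators and fixed-point-freeness) and constructs, at each step, a conjugator that fixes the generators previously aligned. What the paper's computation buys is an explicit description of the conjugating permutation and of the cycle structures, which it reuses by reference in the proof of Lemma~\ref{L41}; what your argument buys is brevity, the elimination of all case analysis, and genuine generality: nowhere do you use the specific structure of $G$ from Theorem~\ref{T31}, so the same three lines simultaneously prove Lemma~\ref{L41} (the $e$-Klenian analogue) and, more generally, show that any two abstractly isomorphic subgroups of $\mathfrak{S}_q$ arising from permutation group polynomials are conjugate in $\mathfrak{S}_q$.
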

\begin{proof}
By Theorem \ref{T31}, $G \cong \mathbb{Z}_{p^{\delta}} \times \mathbb{Z}_p \times \mathbb{Z}_p \times \cdots \times \mathbb{Z}_p$, therefore, $H\cong \mathbb{Z}_{p^{\delta}} \times \mathbb{Z}_p \times \mathbb{Z}_p \times \cdots \times \mathbb{Z}_p$ as $H\cong G$. Moreover, $H$ is a subgroup of $\mathfrak{S}_q$ associated with a permutation group polynomial $g(X_1,X_2)$, it follows that $H=\langle b_0,b_1,\ldots,b_{n-\delta} \rangle$, where $|b_0|=p^{\delta}$, $|b_i|=p$ for all $1 \le i \le n-\delta$ and every non identity permutation of $H$ has no fixed point in $\F_q$. Moreover, due to \cite[Lemma 15]{JJ_2023} and the fact that $|b_0|=p^{\delta}$, $b_0$ is a product of $p^{n-\delta}$ disjoint cycles each of length $p^{\delta}$. Similarly $b_i$, $1 \le i \le n-\delta$, is a  product of $p^{n-1}$ disjoint cycles each of length $p$. As $a_0 \in G$ and $b_0 \in H$ have same cyclic structure, there exists $\sigma_{0} \in \mathfrak{S}_q$ such that  $\sigma_{0} b_0 \sigma_{0}^{-1}=a_0$ and hence 
\[
\sigma_{0} H \sigma_{0}^{-1}=\langle a_0,b'_1,\ldots,b'_{n-\delta} \rangle,
\] 
where $b'_i=\sigma_{0}b_i \sigma_{0}^{-1}.$

To prove the result, it suffices to show that $G$ is conjugate to the subgroup $\sigma_{0} H \sigma_{0}^{-1}$. We proceed by induction on $n - \delta$. First, we verify the base case $ n - \delta = 1$.
In this case, we shall prove that there exists an element \( \sigma_{1} \in \mathfrak{S}_q \) such that
\[
\sigma_{1}\,\sigma_{0} H \sigma_{0}^{-1}\,\sigma_{1}^{-1}
= \langle a_0, a_1 \rangle.
\]

Since $\sigma_{0} H \sigma_{0}^{-1}=\langle a_0,b'_1\rangle$ is a conjugate of $H$, the subgroup $\sigma_{0} H \sigma_{0}^{-1}$ satisfies the following properties: 
\begin{enumerate}[(a)]
\item every non identity permutation of $\langle a_0,b'_1\rangle$ has no fixed point in $\F_q$.
\item  $b'_1a_0=a_0 b'_1$, i.e., $b'_1a_0 {b'_1}^{-1}=a_0$.
\item $b'_1$ is a  product of $p^{n-1}$ disjoint cycles each of length $p$.
\end{enumerate}
Using the above properties, we now analyze the cyclic structure of $b'_1$. 
Observe that $b'_1(c_0)\neq c_t$ for any $0\le t\le p^{\delta}-1$. 
Indeed, if $b'_1(c_0)=c_t$ for some $t$, then by Equation \eqref{e35},
\(
a_0^{-t} b'_1(c_0)=c_0,
\)
which implies that the non identity permutation $a_0^{-t} b'_1 \in \sigma_{0} H \sigma_{0}^{-1}$ fixes $c_0$. This contradicts property~(a) of  $\sigma_{0} H \sigma_{0}^{-1}$. Therefore, 
$$b'_1(c_0)=c_{t_{1,1}p^{\delta}+t_{0,1}},$$ where $0 \le t_{0,1} \le p^{\delta}-1$ and $1\le t_{1,1} \le p^{n-\delta}-1$. Next, $b'_1(c_{t_{1,1}p^{\delta}+t_{0,1}})\neq c_{t_{1,1}p^{\delta}+t}$ for any $0\leq t \leq p^{\delta}-1$. If $b'_1(c_{t_{1,1}p^{\delta}+t_{0,1}})= c_{t_{1,1}p^{\delta}+t}$ for some $0\leq t \leq p^{\delta}-1$, then $a^{t_{0,1}-t}_0b'_1(c_{t_{1,1}p^{\delta}+t_{0,1}})=c_{t_{1,1}p^{\delta}+t_{0,1}}$, which is not possible. Similarly, $b'_1(c_{t_{1,1}p^{\delta}+t_{0,1}})\neq c_t$, otherwise $a_0^{-t} {b'_1}^{2}(c_0)=c_0$. Thus,
$$b'_1\left(c_{t_{1,1}p^{\delta}+t_{0,1}}\right)=c_{t_{1,2}p^{\delta}+t_{0,2}},$$ 
where $0 \le t_{0,2} \le p^{\delta}-1$,$0 \le t_{1,2} \le p^{n-\delta}-1$ and $t_{1,2}\not \in \{0, t_{1,1}\}.$ Continuing in this way, we get 
$$b'_1\left(c_{t_{1,p-2}p^{\delta}+t_{0,p-2}}\right)=c_{t_{1,p-1}p^{\delta}+t_{0,p-1}},$$
 where $0 \le t_{0,p-1} \le p^{\delta}-1$,$0 \le t_{1,p-1} \le p^{n-\delta}-1$ and $t_{1,p-1}\not \in \{0, t_{1,1},\ldots, t_{1,p-2}\}.$ Also, each cycle of $b'_1$ has length $p$, it follows that 
 $$b'_1\left(c_{t_{1,p-1}p^{\delta}+t_{0,p-1}}\right)=c_0.$$
From the identity $b'_1 a_0 {b'_1}^{-1}=a_0$ in (b), we have that $b'_1 a_0^{\,t}=a_0^{\,t} b'_1$ for all 
$1\le t\le p^{\delta}-1$. Hence for $0\le u \le p-2$, we obtain
\[
b'_1 \left(a_0^{t}\left(c_{t_{1,u}p^{\delta}+t_{0,u}}\right)\right)= a_0^{t}\left(b'_1\left(c_{t_{1,u}p^{\delta}+t_{0,u}}\right)\right),
\]
where $t_{1,0}=0$ and $t_{0,0}=0$.
Equation \eqref{e35}, together with above discussion, yields that
\begin{align*}
b'_1\left(c_{t_{1,u}p^{\delta}+(t_{0,u}+t)\pmod {p^{\delta}}}\right)=
b'_1 \left(a_0^{t}\left(c_{t_{1,u}p^{\delta}+t_{0,u}}\right)\right)
&= a_0^{t}\left(b'_1\left(c_{t_{1,u}p^{\delta}+t_{0,u}}\right)\right)\\
 &=c_{t_{1,u+1}p^{\delta}+(t_{0,u+1}+t)\pmod {p^{\delta}}}.
\end{align*}
Similarly, for $u=p-1$, we have
\[
b'_1\left(c_{t_{1,p-1}p^{\delta}+(t_{0,p-1}+t)\pmod {p^{\delta}}}\right)
= c_{t_{1,0}p^{\delta}+(t_{0,0}+t)\pmod {p^{\delta}}}=c_t.
\]
Now, we start with new $t_{1,p} \not \in \{0,t_{1,1},t_{1,2},\ldots,t_{1,p-1}\}$ and apply similar argument as above to get 
\[
b'_1\left(c_{t_{1,u'}p^{\delta}+(t_{0,u'}+t)\pmod {p^{\delta}}}\right)
= c_{t_{1,u'+1}p^{\delta}+(t_{0,u'+1}+t)\pmod {p^{\delta}}}
\]
for $p \le u' \le 2p-2$ and 
\[
b'_1\left(c_{t_{1,2p-1}p^{\delta}+(t_{0,2p-1}+t)\pmod {p^{\delta}}}\right)
= c_{t_{1,p}p^{\delta}+(t_{0,p}+t)\pmod {p^{\delta}}},
\]
where $0 \le t \le p^{\delta}-1$ and all indices $1 \le t_{1,p+i} \le p^{n-\delta}-1$ are pairwise distinct for $0 \le i \le p-1$. Further we have  
$$\{t_{1,0},t_{1,1},\ldots,t_{1,p-1}\}\cap\{t_{1,p},t_{1,p+1},\ldots,t_{1,2p-1}\}=\emptyset$$ 
as $b'_1$ is a permutation. Continuing in this way, we obtain
\begin{equation*}
\begin{split}
b'_1=\displaystyle \prod_{m=0}^{p^{n-\delta-1}-1}\prod_{t=0}^{p^{\delta}-1}&\left(c_{t_{1,mp}p^{\delta}+(t_{0,mp}+t)\pmod {p^\delta}},c_{t_{1,mp+1}p^{\delta}+(t_{0,mp+1}+t)\pmod {p^\delta}},\ldots,\right.\\& \left.~~~c_{t_{1,mp+p-1}p^{\delta}+(t_{0,mp+p-1}+t)\pmod {p^\delta}}\right),
\end{split}
\end{equation*}
where $0 \le t_{1,mp+i} \le p^{n-\delta}-1$ are distinct and $0 \le t_{0,mp+i} \le p^{\delta}-1$.
We now define map $\sigma_1 \in \mathfrak{S}_q$ as 
\[
\sigma_1\left(c_{t_{1,mp+i}p^{\delta}+(t_{0,mp+i}+t)\pmod {p^\delta}}\right)=c_{(mp+i)p^{\delta}+t}
\]
and observe that  
\[
\sigma_1b'_1\sigma_1^{-1}=\displaystyle \prod_{m=0}^{p^{n-\delta-1}-1}\prod_{t=0}^{p^{\delta}-1}\left(c_{(mp) p^{\delta}+t},c_{(mp+1)p^{\delta}+t},\ldots,c_{(mp+p-1)p^{\delta}+t}\right)=a_1.
\]
Moreover, by Equation \eqref{e31}, $ \sigma_1a_0\sigma_1^{-1}=a_0$. Therefore, the result holds for \( n - \delta = 1 \).
Now assume that the result is true for \( n - \delta = k \), and we shall prove it for \( n - \delta = k + 1 \). For \( n - \delta = k + 1 \),
\[
\sigma_0H\sigma_0^{-1}=\langle a_0,b'_1,b'_2,\ldots,b'_{k+1}\rangle \quad \text{ and } \quad G=\langle a_0,a_1,a_2,\ldots,a_{k+1}\rangle.
\]
By mathematical induction, there exists $\sigma_k \in \mathfrak{S}_q$ such that 
\[
\sigma_k\langle a_0,b'_1,b'_2,\ldots,b'_{k}\rangle\sigma_k^{-1}=\langle a_0,a_1,a_2,\ldots,a_{k}\rangle \text{ and hence}
\]
\[
\sigma_k\sigma_0H\sigma_0^{-1}\sigma_k^{-1}= \langle a_0,a_1,a_2,\ldots,a_{k},b''_{k+1}\rangle
\]
for some $b''_{k+1} \in \mathfrak{S}_q$. By similar arguments as in the case $n-\delta=1$, we get the cyclic structure of $b''_{k+1}$ as follows 
\begin{equation*}
\begin{split}
b''_{k+1}=\displaystyle \prod_{m=0}^{p^{n-(\delta+k+1)}-1}\prod_{t_{k}=0}^{p-1}\cdots\prod_{t_{1}=0}^{p-1}\prod_{t_{0}=0}^{p^{\delta}-1}\left(c_{\tau_0},c_{\tau_1},\ldots,c_{\tau_{p-1}}\right),
\end{split}
\end{equation*}
where $\tau_{i}=t_{k+1,mp+i}p^{\delta+k}+((t_{k,mp+i}+t_k)\pmod p)p^{\delta+k-1}+\cdots+((t_{1,mp+i}+t_1)\pmod p)p^{\delta}+((t_{0,mp+i}+t_0)\pmod {p^{\delta}}$, $0 \le t_{k+1,mp+i} \le p^{n-(\delta+k)}-1$ are distinct, $0 \le t_{1,mp+i}, \ldots, t_{k,mp+i} \le p-1$ and $0 \le t_{0,mp+i} \le p^{\delta}-1$.
We next define a map $\sigma_{k+1} \in \mathfrak{S}_q$ as
$$\sigma_{k+1}(\tau_{i})=c_{(mp+i)p^{\delta+k}+t_kp^{\delta+k-1}+\cdots+t_1p^{\delta}+t_0}.$$
One can clearly verify that $\sigma_{k+1}b''_{k+1}\sigma^{-1}_{k+1}=a_{k+1}$ and $\sigma_{k+1}a_{j}\sigma^{-1}_{k+1}=a_{j}$ for $1 \le j \le k$. This completes the proof.
\end{proof}

Consider \(
G = \{a_0^{i_0}a_1^{i_1}\cdots a_{n-\delta}^{i_{n-\delta}} \mid 0 \leq i_0 \leq p^{\delta}-1, 0 \leq i_1,i_2 \dots, i_{n-\delta} \leq p-1\},
\)
where $n\ge 2 $ and  $\delta \in \{1,2\}$, defined as in Theorem \ref{T31}. We denote an $(n-\delta+1)$-tuple $(v_{k,0}, v_{k,1}, \ldots, v_{k,n-\delta}) \in \mathbb{Z}_{p^{\delta}} \times \mathbb{Z}_p \times \mathbb{Z}_p \times \cdots \times \mathbb{Z}_p$ by $V_k = (v_{k,0}, v_{k,1}, \ldots, v_{k,n-\delta})$. For any fixed tuples $V_0, V_1, \ldots, V_{n-\delta} \in \mathbb{Z}_{p^{\delta}} \times \mathbb{Z}_p \times \mathbb{Z}_p \times \cdots \times \mathbb{Z}_p$, define 
\[
\mathcal{N}(V_0, V_1, \ldots, V_{n-\delta}) := \{h \in \mathfrak{S}_q \mid ha_kh^{-1} = a_0^{v_{k,0}} a_1^{v_{k,1}} \cdots a_{n-\delta}^{v_{k,n-\delta}} \text{ for all } 0 \leq k \leq n-\delta\}.
\]

\begin{lem}\label{Atmost_lemma_T31}
Let $G$ be the group defined as in Theorem~\ref{T31}, and $V_0, V_1, \ldots, V_{n-\delta} \in \mathbb{Z}_{p^{\delta}} \times \mathbb{Z}_p \times \mathbb{Z}_p \times \cdots \times \mathbb{Z}_p$, where $\delta \in \{1,2\}$. Then \( |\mathcal{N}(V_0, V_1, \ldots, V_{n-\delta})| \leq p^n \).
\end{lem}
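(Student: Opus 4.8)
The plan is to show that the set $\mathcal{N}(V_0, \ldots, V_{n-\delta})$, if nonempty, is a coset of the centralizer $C_{\mathfrak{S}_q}(G)$ of $G$ in $\mathfrak{S}_q$, and then to compute that the centralizer has order exactly $p^n = q$. Granting this, the bound $|\mathcal{N}(V_0, \ldots, V_{n-\delta})| \leq p^n$ follows immediately, since cosets have the same cardinality as the subgroup and the empty set trivially satisfies the bound.

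I'll first verify the coset structure. Suppose $h_1, h_2 \in \mathcal{N}(V_0, \ldots, V_{n-\delta})$, so that $h_1 a_k h_1^{-1} = h_2 a_k h_2^{-1}$ for every $0 \leq k \leq n-\delta$. Rearranging gives $(h_2^{-1} h_1) a_k (h_2^{-1} h_1)^{-1} = a_k$ for all $k$, which says that $h_2^{-1} h_1$ commutes with every generator $a_k$ of $G$, hence with all of $G$; that is, $h_2^{-1}h_1 \in C_{\mathfrak{S}_q}(G)$. Conversely, if $h \in \mathcal{N}$ and $c \in C_{\mathfrak{S}_q}(G)$, then $hc$ again conjugates each $a_k$ to the same element. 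Thus $\mathcal{N}(V_0, \ldots, V_{n-\delta})$ is either empty or a left coset $h\, C_{\mathfrak{S}_q}(G)$, giving $|\mathcal{N}(V_0, \ldots, V_{n-\delta})| = |C_{\mathfrak{S}_q}(G)|$ in the nonempty case.

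The crux is therefore computing $|C_{\mathfrak{S}_q}(G)|$. Here I would use the fact, established in Theorem~\ref{T31}, that $G$ acts on $\F_q$ as a \emph{regular} abelian group: since $|G| = q = |\F_q|$ and no non-identity element of $G$ has a fixed point, the action of $G$ on $\F_q$ is sharply transitive (free and transitive). For a regular permutation group, a standard result gives that the centralizer in the full symmetric group is itself regular of the same order, and in the abelian case the centralizer coincides with $G$ itself, so $|C_{\mathfrak{S}_q}(G)| = |G| = q = p^n$. Concretely, one shows any $c$ commuting with $G$ is determined by the single value $c(c_0)$: picking the unique $g \in G$ with $g(c_0) = c(c_0)$, the relation $c\, a = a\, c$ forces $c(a(c_0)) = a(c(c_0)) = a(g(c_0)) = g(a(c_0))$ for all $a \in G$, and since $G$ is transitive this pins down $c = g$ on all of $\F_q$. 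Hence there are at most $q$ such $c$, giving $|C_{\mathfrak{S}_q}(G)| \leq p^n$ directly, which is all the lemma requires.

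The main obstacle is the centralizer computation: care is needed to invoke transitivity correctly and to confirm that the candidate $c = g$ genuinely commutes with $G$ on the whole of $\F_q$, not merely at the base point $c_0$. The free and transitive action from Theorem~\ref{T31} supplies exactly the two ingredients needed — transitivity guarantees every element of $\F_q$ is $a(c_0)$ for some $a \in G$, and freeness guarantees the element $g$ with prescribed value at $c_0$ is unique — so the argument closes cleanly and yields the bound $p^n$ without requiring the full coset apparatus, though phrasing it via cosets makes the uniform bound (covering both empty and nonempty $\mathcal{N}$) most transparent.
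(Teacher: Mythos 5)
Your proposal is correct, but it reaches the bound by a genuinely different route than the paper. The paper works directly with the map $\zeta(h)=h(c_0)$ on $\mathcal{N}(V_0,\ldots,V_{n-\delta})$ and proves injectivity by hand: setting $g=h_2^{-1}h_1$, it shows $g=I$ by induction on the index $t$, using the explicit cycle structure of the generators (Equations \eqref{e31}--\eqref{e32}) to exhibit, for each $c_\ell$, a generator carrying an already-fixed point to $c_\ell$. This is, in effect, a concrete verification that the orbit of $c_0$ under $G$ is all of $\F_q$, fused with the commuting argument. You instead factor the problem structurally: (i) $\mathcal{N}$ is empty or a left coset of $\mathfrak{C}_{\mathfrak{S}_q}(G)$, so its size equals the centralizer's; (ii) Theorem \ref{T31} makes the action of $G$ regular (fixed-point-freeness plus $|G|=q$ forces a single orbit by orbit counting); (iii) the centralizer of a regular Abelian subgroup is the subgroup itself, for which you give a correct self-contained argument (any $c$ commuting with $G$ agrees with the unique $g\in G$ satisfying $g(c_0)=c(c_0)$ at every point $a(c_0)$, hence everywhere by transitivity). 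Both proofs rest on the same underlying mechanism --- an element commuting with a transitive group is determined by one value --- but your packaging buys more: it yields $|\mathcal{N}(V_0,\ldots,V_{n-\delta})|\in\{0,p^n\}$ at once (the equality the paper only obtains in Lemma \ref{Atleast_lemma_T31} by constructing the $p^n$ maps $h_d$ explicitly), and it identifies $\mathfrak{C}_{\mathfrak{S}_q}(G)=G$, which is exactly what the later corollary on equivalent polynomials needs. The paper's induction, by contrast, is more elementary and self-contained, never invoking the language of group actions and staying entirely within the explicit combinatorics of the $a_i$. One small caution: do not lean on the quoted ``standard result'' about centralizers of regular groups alone --- it is your concrete transitivity argument that makes the step rigorous, and note that freeness (uniqueness of $g$) is not even needed for the upper bound, only existence via transitivity.
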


\begin{proof}
Let $ V_0, V_1, \ldots, V_{n-\delta} \in \mathbb{Z}_{p^{\delta}} \times \mathbb{Z}_p \times \mathbb{Z}_p \times \cdots \times \mathbb{Z}_p$ be fixed vectors. We aim to show that \( |\mathcal{N}(V_0, V_1, \ldots, V_{n-\delta})| \leq p^n \). If \( \mathcal{N}(V_0, V_1, \ldots, V_{n-\delta}) = \emptyset \), then the result is trivial. Thus, we assume \( \mathcal{N}(V_0, V_1, \ldots, V_{n-\delta}) \neq \emptyset \).

Define a map \( \zeta : \mathcal{N}(V_0, V_1, \ldots, V_{n-\delta}) \to \mathbb{F}_q \) by \( \zeta(h) := h(c_0) \). It suffices to show that \( \zeta \) is injective. Suppose \( h_1, h_2 \in \mathcal{N}(V_0, V_1, \ldots, V_{n-\delta}) \) and \( \zeta(h_1) = \zeta(h_2) \), i.e., \( h_1(c_0) = h_2(c_0) \). We will show that \( h_1 = h_2 \). Since \( h_1, h_2 \in \mathcal{N}(V_0, V_1, \ldots, V_{n-\delta}) \), we have
\[
h_1 a_k h_1^{-1} = a_0^{v_{k,0}} a_1^{v_{k,1}} \cdots a_{n-\delta}^{v_{k,n-\delta}} \quad \text{and} \quad h_2 a_k h_2^{-1} = a_0^{v_{k,0}} a_1^{v_{k,1}} \cdots a_{n-\delta}^{v_{k,n-\delta}} \quad \text{for all } 0 \leq k \leq n-\delta.
\]
This implies \( h_2^{-1} h_1 a_k h_1^{-1} h_2 = a_k \) for all \( 0 \leq k \leq n-\delta \). Let \( g = h_2^{-1} h_1 \). Then \( g a_k g^{-1} = a_k \) for all \( 0 \leq k \leq n-\delta \) and \( g(c_0) = c_0 \). We aim to show \( g = I \), i.e., $g(c_t)=c_t$, for all $0 \leq t \leq q-1$. We proceed by induction on $t$. For \( t = 0 \), the result is true. Assume \( g(c_t) = c_t \) for \( t < \ell \), where \( 1 \leq \ell \leq p^n - 1 \). We will prove \( g(c_\ell) = c_\ell \). Notice that for every $1 \leq \ell \leq p^n - 1$, either $1 \leq \ell \leq  p^{\delta}-1$ or there exists a unique integer $1 \leq i \leq n-\delta$ such that $ p^{i+\delta-1} \leq \ell \leq p^{i+\delta} - 1 $. First assume that $1 \leq \ell \leq  p^{\delta}-1$. Then $0 \leq \ell -1 < p^{\delta}-1$ and thus Equation \eqref{e31} implies that $a_0(c_{\ell-1})=c_{\ell}$. Since $g(c_{\ell-1})=c_{\ell-1}$ and $ga_0g^{-1}=a_0$, it follows that $ga_0g^{-1}(c_{\ell-1})=a_0(c_{\ell-1})$ and $ga_0g^{-1}(g(c_{\ell-1}))=a_0(c_{\ell-1})$. As $a_{0}(c_{\ell-1})=c_{\ell}$, we conclude that $g(c_{\ell})=c_{\ell}$ for $1 \leq \ell \leq  p^{\delta}-1$. Next, let $\ell$ satisfy $ p^{i+\delta-1} \leq \ell \leq p^{i+\delta} - 1 $, where $1 \leq i \leq n-\delta$. Then, $ 0 \leq \ell-p^{i+\delta-1} \leq p^{i+\delta}-p^{i+\delta-1} - 1=(p-2)p^{i+\delta-1}+p^{i+\delta-1}-1$ and hence there exist integers $j_i$ and $r_i$ such that $\ell-p^{i+\delta-1}=j_i+r_ip^{i+\delta-1}$ with  $0 \leq j_i \leq p^{i+\delta-1}-1$ and $0 \leq r_i \leq p-2$. Therefore, Equation \eqref{e32} implies $a_i(c_{\ell-p^{i+\delta-1}})=a_i(c_{j_i+r_ip^{i+\delta-1}})=a_i(c_{j_i+(r_i+1)p^{i+\delta-1}})=c_{\ell}$. Moreover, using the induction hypothesis, $g(c_{\ell-p^{i+\delta-1}})=c_{\ell-p^{i+\delta-1}}$ and the identity $ga_{i}g^{-1}=a_i$ implies $ga_{i}g^{-1}(g(c_{\ell-p^{i+\delta-1}}))=a_i(c_{\ell-p^{i+\delta-1}})$, i.e., $g(c_{\ell})=c_{\ell}$. It follows that $g = I$, and consequently $h_1 = h_2$. Since $\zeta$ is injective, we conclude that $ |\mathcal{N}(V_0, V_1, \ldots, V_{n-\delta})| \leq p^n$.
\end{proof}

\begin{lem}\label{Atleast_lemma_T31}
Let $\delta \in \{1,2\}$ and $V_0, V_1, \ldots, V_{n-\delta} \in \mathbb{Z}_{p^{\delta}} \times \mathbb{Z}_p \times \mathbb{Z}_p \times \cdots \times \mathbb{Z}_p$. Then $\mathcal{N}(V_0, V_1, \ldots, V_{n-\delta}) \neq \emptyset$ if and only if $p^{\delta-1} \bigm| v_{k,0}$ for all $1 \leq k \leq n-\delta$ and $\det A \neq 0$, where 
\[
A :=
\begin{bmatrix}
R_0\\
R_1\\
\vdots \\
R_{n-\delta}
\end{bmatrix}
=
\begin{bmatrix}
v_{0,0}    \pmod p &  v_{0,1}p^{\delta-1} \pmod p & \cdots & v_{0,n-\delta}p^{\delta-1}\pmod p\\
\frac{v_{1,0}}{p^{\delta-1}} & v_{1,1} & \cdots & v_{1,n-\delta}\\
\vdots & \vdots & \ddots & \vdots\\
\frac{v_{n-\delta,0}}{p^{\delta-1}} & v_{n-\delta,1} & \cdots & v_{n-\delta,n-\delta}
\end{bmatrix}.
\]

Moreover, if $\mathcal{N}(V_0, V_1, \ldots, V_{n-\delta}) \neq \emptyset$, then $|\mathcal{N}(V_0, V_1, \ldots, V_{n-\delta})| = p^n$.
\end{lem}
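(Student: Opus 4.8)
The plan is to reinterpret membership in $\mathcal N(V_0,\ldots,V_{n-\delta})$ in terms of automorphisms of the abstract group $G$, and then to translate the existence and the count of such automorphisms into the stated arithmetic conditions. The starting observation is that if $h\in\mathcal N(V_0,\ldots,V_{n-\delta})$, then $ha_kh^{-1}\in G$ for every generator $a_k$, so $hGh^{-1}\subseteq G$, and since $|hGh^{-1}|=|G|$ we obtain $hGh^{-1}=G$. Thus conjugation by $h$ restricts to an automorphism $\phi$ of $G$ with $\phi(a_k)=a_0^{v_{k,0}}\cdots a_{n-\delta}^{v_{k,n-\delta}}$. Conversely, I will realize any automorphism of $G$ by a conjugation in $\mathfrak S_q$: because $G$ has order $q$ and, by Theorem~\ref{T31}, acts on $\mathbb F_q$ with no nonidentity fixed points, this action is regular, so identifying $\mathbb F_q$ with $G$ via $g\mapsto g(c_0)$ realizes $G$ as its own left regular representation. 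For $\phi\in\mathrm{Aut}(G)$ the permutation $\tilde\phi$ defined by $\tilde\phi(g(c_0))=\phi(g)(c_0)$ then satisfies $\tilde\phi\,a\,\tilde\phi^{-1}=\phi(a)$ for all $a\in G$ (a direct check), so $\tilde\phi\in\mathcal N(V_0,\ldots,V_{n-\delta})$. Hence $\mathcal N(V_0,\ldots,V_{n-\delta})\neq\emptyset$ if and only if the assignment $a_k\mapsto a_0^{v_{k,0}}\cdots a_{n-\delta}^{v_{k,n-\delta}}$ extends to an automorphism of $G$.

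It then remains to characterize when this assignment is a well-defined automorphism. First, since $G$ is abelian and every element has order dividing $p^{\delta}$, the only defining relation that can fail is $\phi(a_k)^p=1$ for $1\le k\le n-\delta$; computing $\phi(a_k)^p=a_0^{p\,v_{k,0}}$ (as $a_j^p=1$ for $j\ge 1$) shows this holds exactly when $p^{\delta}\mid p\,v_{k,0}$, i.e. $p^{\delta-1}\mid v_{k,0}$. This yields the divisibility condition and guarantees $\phi$ is a homomorphism. Next, since $G$ is a finite $p$-group, $\phi$ is an automorphism if and only if it is injective on the socle $G[p]=\langle a_0^{p^{\delta-1}},a_1,\ldots,a_{n-\delta}\rangle\cong\mathbb F_p^{\,n-\delta+1}$. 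Writing $f_0=a_0^{p^{\delta-1}}$ and $f_k=a_k$ for $k\ge 1$ as an $\mathbb F_p$-basis of $G[p]$, I will compute $\phi(f_0)=\phi(a_0)^{p^{\delta-1}}$ and $\phi(f_k)=\phi(a_k)$ and read off their coordinates; the factors $p^{\delta-1}$ in $\phi(f_0)$ and the division $v_{k,0}/p^{\delta-1}$ in $\phi(f_k)$, legitimized by the divisibility condition, are precisely what produce the entries of the rows $R_0$ and $R_k$ of $A$. Thus $A$ is the matrix of $\phi|_{G[p]}$ over $\mathbb F_p$, and $\phi$ is an automorphism if and only if $\det A\neq 0$, completing the equivalence.

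For the final assertion, I will use that $\mathcal N(V_0,\ldots,V_{n-\delta})$, when nonempty, is a single left coset of the centralizer $C:=C_{\mathfrak S_q}(G)$: if $h_1,h_2\in\mathcal N(V_0,\ldots,V_{n-\delta})$ then $h_2^{-1}h_1$ centralizes each $a_k$ and hence all of $G$, while $h_0 c\in\mathcal N(V_0,\ldots,V_{n-\delta})$ for any $h_0\in\mathcal N(V_0,\ldots,V_{n-\delta})$ and $c\in C$. Therefore $|\mathcal N(V_0,\ldots,V_{n-\delta})|=|C|$. Since $G$ is abelian we have $G\subseteq C$, giving $|C|\ge|G|=p^n$; combined with the bound $|\mathcal N(V_0,\ldots,V_{n-\delta})|\le p^n$ from Lemma~\ref{Atmost_lemma_T31}, this forces $|\mathcal N(V_0,\ldots,V_{n-\delta})|=|C|=p^n$.

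I expect the main obstacle to be the bookkeeping in the socle computation: one must track the different orders of the generators ($p^{\delta}$ versus $p$) and verify that the $p^{\delta-1}$-scalings collapse correctly, uniformly in $\delta\in\{1,2\}$, so that the rows of $A$ match exactly the coordinate vectors of $\phi|_{G[p]}$. Establishing that a regular permutation group realizes all of its abstract automorphisms by conjugation is the other key ingredient, but it is standard once the action is identified as regular via Theorem~\ref{T31}.
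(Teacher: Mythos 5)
Your proposal is correct, and it reaches the lemma by a genuinely different route than the paper. The paper proves both directions by hand: for necessity it uses order considerations (if $p^{\delta-1}\nmid v_{k,0}$ then $|a_0^{v_{k,0}}|>p$, contradicting $|ha_kh^{-1}|=p$) and turns a linear dependence among the rows of $A$ into a nontrivial relation $a_0^{p^{\delta-1}\alpha'_0}a_1^{\alpha'_1}\cdots a_{n-\delta}^{\alpha'_{n-\delta}}=I$, contradicting $|G|=p^n$; for sufficiency it writes down, for each $d\in\{0,\ldots,p^n-1\}$, an explicit candidate
\[
h_d\bigl(c_{t_0+t_1p^{\delta}+\cdots+t_{n-\delta}p^{n-1}}\bigr)=a_{0}^{t_0v_{0,0}+\cdots+t_{n-\delta}v_{n-\delta,0}}\cdots a_{n-\delta}^{t_0v_{0,n-\delta}+\cdots+t_{n-\delta}v_{n-\delta,n-\delta}}(c_d),
\]
shows injectivity by reducing to the nonsingular system $A^TV=0$, verifies $h_da_k=(a_0^{v_{k,0}}\cdots a_{n-\delta}^{v_{k,n-\delta}})h_d$ generator by generator, and gets the count from the $p^n$ pairwise distinct maps $h_d$ together with Lemma~\ref{Atmost_lemma_T31}. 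You instead exploit that the fixed-point-freeness in Theorem~\ref{T31} plus $|G|=q$ makes the action of $G$ on $\mathbb{F}_q$ regular, so that $\mathcal{N}(V_0,\ldots,V_{n-\delta})\neq\emptyset$ if and only if the prescribed assignment on generators extends to an abstract automorphism of $G$ (the nontrivial direction being the standard holomorph construction $\tilde\phi(g(c_0))=\phi(g)(c_0)$); you then characterize such automorphisms of $G\cong\mathbb{Z}_{p^{\delta}}\times\mathbb{Z}_p^{\,n-\delta}$ structurally: the divisibility condition is exactly preservation of the relations $a_k^p=1$, and $A$ is precisely the matrix of $\phi$ restricted to the socle $G[p]=\langle a_0^{p^{\delta-1}},a_1,\ldots,a_{n-\delta}\rangle$, so bijectivity amounts to $\det A\neq 0$. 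Your counting step is also different and slicker: $\mathcal{N}$, when nonempty, is a coset of the centralizer $C_{\mathfrak{S}_q}(G)\supseteq G$, giving $|\mathcal{N}|\geq p^n$, which combined with Lemma~\ref{Atmost_lemma_T31} forces equality (and incidentally shows $C_{\mathfrak{S}_q}(G)=G$, which the paper's Corollary after Theorem~\ref{Equivalent_PGP} needs separately). What your approach buys is conceptual transparency — it explains why the conditions take this shape and replaces the paper's index bookkeeping with standard facts about regular permutation groups and automorphisms of abelian $p$-groups; what the paper's approach buys is explicitness and self-containment, producing all $p^n$ elements of $\mathcal{N}$ in closed form without invoking any representation-theoretic machinery.
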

\begin{proof}
First, we consider the case where $\mathcal{N}(V_0, V_1, \ldots, V_{n-\delta}) \neq \emptyset$, and let $h \in \mathcal{N}(V_0, V_1, \ldots, V_{n-\delta})$. Then we have 
\begin{equation}\label{EE23}
\begin{cases}
ha_0h^{-1} &= a_0^{v_{0,0}} a_1^{v_{0,1}} \cdots a_{n-\delta}^{v_{0,n-\delta}},\\
ha_1h^{-1} &= a_0^{v_{1,0}} a_1^{v_{1,1}} \cdots a_{n-\delta}^{v_{1,n-\delta}},\\  
&\vdots\\
ha_{n-\delta}h^{-1} &= a_0^{v_{n-\delta,0}} a_1^{v_{n-\delta,1}} \cdots a_{n-\delta}^{v_{n-\delta,n-\delta}}.
\end{cases}
\end{equation}

In this case, our aim is to show that $p^{\delta-1} \mid v_{k,0}$ for all $1 \leq k \leq n-\delta$ and that $\det A \neq 0$. The condition $p^{\delta-1} \mid v_{k,0}$ is trivial for all $1 \leq k \leq n-\delta$ if $\delta=1$. For $\delta=2$, assume to the contrary that $p^{\delta-1} \nmid v_{k,0}$ for some $1 \leq k \leq n-\delta$. Then $\gcd(v_{k,0},p^{\delta})<p^{\delta-1}$, and thus $|a_0^{v_{k,0}}|>p$, which implies $\lvert a_0^{v_{k,0}} a_1^{v_{k,1}} \cdots a_{n-\delta}^{v_{k,n-\delta}} \rvert=\lvert ha_{k}h^{-1}\rvert>p $. This is a contradiction because $|a_k| = p$ for all $1 \leq k \leq n-\delta$. Consequently, $p ^{\delta-1}\mid v_{k,0}$ for all $1 \leq k \leq n-\delta$. Next, suppose $p ^{\delta-1}\mid v_{k,0}$ for all $1 \leq k \leq n-\delta$ and $\det A = 0$. This implies that the set $\{R_0, R_1, \ldots, R_{n-\delta}\}$ is linearly dependent over $\mathbb{Z}_p$, i.e., there exist $\alpha_0, \alpha_1, \ldots, \alpha_{n-\delta} \in \mathbb{Z}_p$, not all zero, such that 
\[
\alpha_0R_0 + \alpha_1R_1 + \cdots + \alpha_{n-\delta}R_{n-\delta} = 0.
\]

Without loss of generality, suppose $\alpha_\ell \neq 0$, where $0 \leq \ell \leq n-\delta$. Then, we can write 
\[
R_\ell = \alpha'_0 R_0 + \alpha'_1 R_1 + \cdots + \alpha'_{\ell-1}R_{\ell-1} + \alpha'_{\ell+1}R_{\ell+1} + \cdots + \alpha'_{n-\delta}R_{n-\delta},
\]
where $\alpha'_i = \frac{\alpha_i}{\alpha_\ell}$ in $\mathbb{Z}_p$ for $i \neq \ell$. 

Now, we raise the $0$-th equation of the System~\eqref{EE23} to the power $p^{\delta-1}\alpha'_0$, and the $i$-th equation to the power $\alpha'_i$ for $1 \leq i \leq n-\delta$, where $\alpha'_{\ell}=-1$. Multiplying the resulting equations yields 
\[
a_0^{p^{\delta-1}\alpha'_0} a_1^{\alpha'_1} \cdots a_{n-\delta}^{\alpha'_{n-\delta}}=I,
\]
 which is a contradiction to the fact that $|G|=p^n$. Thus, we conclude that $p^{\delta-1} \mid v_{k,0}$ for $1 \leq k \leq n-\delta$ and $\det A \neq 0$.

 Conversely, we assume that $p^{\delta-1} \mid v_{k,0}$ for all $1 \leq k \leq n-\delta$ and $\det A \neq 0$. For a fixed integer $0 \leq d \leq p^n-1$, we define a function $h_d$ as 
 \begin{equation*}
 \begin{split}
 h_d(c_{t_0+t_1p^{\delta}+t_2p^{\delta+1}+\cdots+t_{n-\delta}p^{n-1}})=&a_{0}^{t_0v_{0,0}+t_{1}v_{1,0}+\cdots+t_{n-\delta}v_{n-\delta,0}}a_{1}^{t_0v_{0,1}+t_{1}v_{1,1}+\cdots+t_{n-\delta}v_{n-\delta,1}}\\&     \cdots a_{n-\delta}^{t_0v_{0,n-\delta}+t_{1}v_{1,n-\delta}+\cdots+t_{n-\delta}v_{n-\delta,n-\delta}}(c_d),
 \end{split}
 \end{equation*}
where $0 \leq t_0 \leq p^{\delta}-1$ and $0 \leq t_k \leq p-1$ for $k \neq 0$. We shall now demonstrate that \( h_d \) is a permutation and \( h_d \in \mathcal{N}(V_0, V_1, \ldots, V_{n-\delta}) \). Suppose that there exist tuples $(t_0, t_1, \ldots, t_{n-\delta}) \neq (t'_0, t'_1, \ldots, t'_{n-\delta})$ such that 
\[
h_d(c_{t_0 + t_1p^{\delta} + t_2p^{\delta+1} + \cdots + t_{n-\delta}p^{n-1}}) = h_d(c_{t'_0 + t'_1p^{\delta} + t'_2p^{\delta+1} + \cdots + t'_{n-\delta}p^{n-1}}).
\]
This leads to the following system of equations:
\[
\begin{cases}
(t_0 - t'_0)v_{0,0} + (t_1 - t'_1)v_{1,0} + \cdots + (t_{n-\delta} - t'_{n-\delta})v_{n-\delta,0} \equiv 0 \pmod{p^{\delta}}, \\
(t_0 - t'_0)v_{0,1} + (t_1 - t'_1)v_{1,1} + \cdots + (t_{n-\delta} - t'_{n-\delta})v_{n-\delta,1} \equiv 0 \pmod{p}, \\
\vdots \\
(t_0 - t'_0)v_{0,n-\delta} + (t_1 - t'_1)v_{1,n-\delta} + \cdots + (t_{n-\delta} - t'_{n-\delta})v_{n-\delta,n-\delta} \equiv 0 \pmod{p}.
\end{cases}
\]

Since $p^{\delta-1} \mid v_{k,0}$ for all $1 \leq k \leq n-\delta$, therefore $p^{\delta-1} \mid (t_0 - t'_0)v_{0,0}$. If $\delta=1$, then clearly $\gcd(v_{0,0},p^{\delta-1})=1$. If $\delta=2$, then first row of matrix $A$ will be $R_0=(v_{0,0}\pmod p,0,\dots,0)$. However, we have $\det A \neq 0$, which implies that $\gcd(v_{0,0},p)=\gcd(v_{0,0},p^{\delta-1})=1$. Thus, $\gcd(v_{0,0},p^{\delta-1})=1$ and $p^{\delta-1} \mid (t_0 - t'_0)$ for $\delta \in \{1,2\}$. This yields that the above system is equivalent to
\[
\begin{cases}
\frac{(t_0 - t'_0)}{p^{\delta-1}}(v_{0,0} \pmod{p}) + (t_1 - t'_1)\frac{v_{1,0}}{p^{\delta-1}} + \cdots + (t_{n-\delta} - t'_{n-\delta})\frac{v_{n-\delta,0}}{p^{\delta-1}} \equiv 0 \pmod{p}, \\
\frac{t_0 - t'_0}{p^{\delta-1}}(v_{0,1}p^{\delta-1} \pmod p)+(t_1 - t'_1)v_{1,1} + \cdots + (t_{n-\delta} - t'_{n-\delta})v_{n-\delta,1} \equiv 0 \pmod{p}, \\
 \vdots \\
\frac{t_0 - t'_0}{p^{\delta-1}}(v_{0,n-\delta}p^{\delta-1} \pmod p)+(t_1 - t'_1)v_{1,n-\delta} + \cdots + (t_{n-\delta} - t'_{n-\delta})v_{n-\delta,n-\delta} \equiv 0 \pmod{p}.
\end{cases}
\]

From this system, we deduce that $A^T V = 0$, where $V = \left( \frac{t_0 - t'_0}{p^{\delta-1}}, t_1 - t'_1, \ldots, t_{n-\delta} - t'_{n-\delta} \right)^T$. Since $\det A \neq 0$, it follows that $V = 0$, which implies $t_k = t'_k$ for $1 \leq k \leq n-\delta$ and $\frac{t_0 - t'_0}{p^{\delta-1}} = 0$ in $\mathbb{Z}_p$. As $\frac{t_0 - t'_0}{p^{\delta-1}} \equiv 0 \pmod{p}$, we have $t_0 - t'_0 \equiv 0 \pmod{p^{\delta}}$, so $t_0 = t'_0$. This contradicts our assumption that \( (t_0, t_1, \ldots, t_{n-\delta}) \neq (t'_0, t'_1, \ldots, t'_{n-\delta}) \). Hence, \( h_d \) is injective, and therefore $h_d$ is permutation.

Next, we show that $h_d \in \mathcal{N}(V_0, V_1, \ldots, V_{n-\delta})$, which is equivalent to show that
\[
h_d a_k(c_{t_0 + t_1p^{\delta}+t_2p^{\delta+1} + \cdots + t_{n-\delta}p^{n-1}}) = a_0^{v_{k,0}} a_1^{v_{k,1}} \cdots a_{n-\delta}^{v_{k,n-\delta}} h_d(c_{t_0 + t_1p^{\delta}+t_2p^{\delta+1} + \cdots + i_{n-\delta}p^{n-1}})
\]
for all $0 \leq k \leq n-\delta$, $t_0 \in \{0,1,\ldots,p^{\delta}-1\}$ and $t_1,t_2 \ldots, t_{n-\delta} \in \{0, 1, \ldots, p-1\}$. We verify this identity only for $k=0$. The remaining cases follow by a similar argument, using Equation~\eqref{e34} and definition of $h_d$.

 Using Equation \eqref{e33}, we obtain
\[
a_0(c_{t_0 + t_1p^{\delta} + t_2p^{\delta+1}+ \cdots + t_{n-\delta}p^{n-1}}) = c_{(t_0+1)\pmod {p^{\delta}} + t_1p^{\delta} + t_2p^{\delta+1}+ \cdots + t_{n-\delta}p^{n-1}}.
\]
Now, using the above equation and definition of $h_{d}$,
\begin{equation*}
\begin{split}
h_da_0(c_{t_0 + t_1p^{\delta} + t_2p^{\delta+1}+ \cdots + t_{n-\delta}p^{n-1}})=&h_d(c_{(t_0+1)\pmod {p^{\delta}} + t_1p^{\delta} + t_2p^{\delta+1}+ \cdots + t_{n-\delta}p^{n-1}})\\=&a_{0}^{(t_0+1)v_{0,0}+t_{1}v_{1,0}+\cdots+t_{n-\delta}v_{n-\delta,0}}\\&a_{1}^{(t_0+1)v_{0,1}+t_{1}v_{1,1}+\cdots+t_{n-\delta}v_{n-\delta,1}}\cdots\\&  a_{n-\delta}^{(t_0+1)v_{0,n-\delta}+t_{1}v_{1,n-\delta}+\cdots+t_{n-\delta}v_{n-\delta,n-\delta}}(c_d),
\\=&a_{0}^{v_{0,0}}a_{0}^{t_0v_{0,0}+t_{1}v_{1,0}+\cdots+t_{n-\delta}v_{n-\delta,0}}\\&a_{1}^{v_{0,1}}a_{1}^{t_0v_{0,1}+t_{1}v_{1,1}+\cdots+t_{n-\delta}v_{n-\delta,1}}\cdots \\&  a_{n-\delta}^{v_{0,n-\delta}}a_{n-\delta}^{t_0v_{0,n-\delta}+t_{1}v_{1,n-\delta}+\cdots+t_{n-\delta}v_{n-\delta,n-\delta}}(c_d)\\=&a_{0}^{v_{0,0}}a_{1}^{v_{0,1}}\cdots a_{n-\delta}^{v_{0,n-\delta}}h_d (c_{t_0 + t_1p^{\delta} + t_2p^{\delta+1}+ \cdots + t_{n-\delta}p^{n-1}})
\end{split}
\end{equation*}
as $|a_0| = p^{\delta}$ and $|a_1|=|a_2|=\cdots=|a_{n-\delta}|=p\mid p^{\delta}$. Thus, $h_d \in \mathcal{N}(V_0, V_1, \ldots, V_{n-\delta})$. Notice that $h_d$ depends on $c_d$, which has $p^n$ choices. Moreover, for $d_1 \neq d_2$, we have $h_{d_1} \neq h_{d_2}$. Hence, using Lemma \ref{Atmost_lemma_T31}, we conclude that $|\mathcal{N}(V_0, V_1, \ldots, V_{n-\delta})| = p^n$.
\end{proof}

\begin{thm}\label{T41}
Let $\delta \in \{1,2\}$, $q = p^n$, where $n\ge 2 $ is a positive integer, and let $f \in \mathbb{F}_q[X, Y]$ be the permutation group polynomial corresponding to the group $G$ in Theorem~\ref{T31}. Furthermore, let $\mathcal{P}_{f}$ denote the number of all permutation group polynomials over $\F_q$ that are of the form $f$. Then 
\[
\displaystyle \mathcal{P}_{f} =
        \dfrac{p^n!(p^n-1)!}{\left(p^{n}-p^{(\delta-1)(n-\delta+1)}\right)\displaystyle\prod_{i=1}^{n-\delta}\left(p^{n-\delta+1}-p^{i}\right)}.
\]
\end{thm}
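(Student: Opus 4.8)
The plan is to count the permutation group polynomials of the form $f$ by decoupling the two independent pieces of data that produce them: the underlying subgroup of $\mathfrak{S}_q$ and the way its elements are listed as a tuple. By Lemma~\ref{tpl} a permutation group polynomial is the same data as a permutation polynomial tuple $(\beta_0,\ldots,\beta_{q-1})$ whose entries form a fixed-point-free subgroup $H<\mathfrak{S}_q$ of order $q$. For a fixed such $H$, every bijective listing of $H$ as a $q$-tuple is automatically a permutation polynomial tuple, since $\beta_i\beta_j^{-1}\in H\setminus\{I\}$ is fixed-point-free whenever $i\neq j$; hence exactly $q!$ permutation group polynomials have associated group $H$, and distinct groups yield disjoint families. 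Therefore $\mathcal{P}_f = q!\cdot\#\{H\}$, where $H$ ranges over associated groups of permutation group polynomials of the form $f$. By Lemma~\ref{conjugate_lemma} every such $H$ is conjugate to $G$, and conversely every conjugate $\sigma G\sigma^{-1}$ is again a fixed-point-free subgroup of order $q$ isomorphic to $G$ (conjugation preserves cycle type, hence order and fixed-point-freeness), so it too arises in this way. The relevant groups are thus precisely the conjugates of $G$, and the orbit--stabilizer theorem applied to the conjugation action gives $\#\{H\}=|\mathfrak{S}_q|/|N(G)|=p^n!/|N(G)|$, where $N(G)$ is the normalizer of $G$ in $\mathfrak{S}_q$. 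Combining, $\mathcal{P}_f=(p^n!)^2/|N(G)|$, and the whole problem reduces to computing $|N(G)|$.

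Next I would compute $|N(G)|$ through the sets $\mathcal{N}(V_0,\ldots,V_{n-\delta})$. The key observation is the partition
\[
N(G)=\bigsqcup_{(V_0,\ldots,V_{n-\delta})}\mathcal{N}(V_0,\ldots,V_{n-\delta}),
\]
which holds because any $h\in N(G)$ sends each generator $a_k$ to a unique element $a_0^{v_{k,0}}\cdots a_{n-\delta}^{v_{k,n-\delta}}\in G$, placing $h$ in exactly one block; conversely if $h$ lies in some $\mathcal{N}$ then $ha_kh^{-1}\in G$ for all $k$, forcing $hGh^{-1}=G$. By Lemma~\ref{Atleast_lemma_T31} the nonempty blocks are exactly those indexed by admissible tuples, namely those with $p^{\delta-1}\mid v_{k,0}$ for all $1\le k\le n-\delta$ and $\det A\neq 0$, and by Lemmas~\ref{Atmost_lemma_T31} and~\ref{Atleast_lemma_T31} each such block has cardinality exactly $p^n$. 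Hence $|N(G)|=p^n\cdot M$, where $M$ is the number of admissible tuples.

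The core computation is therefore to evaluate $M$, which I would do by analyzing the $(n-\delta+1)\times(n-\delta+1)$ matrix $A$ over $\mathbb{F}_p$ separately for the two values of $\delta$. For $\delta=1$ every coordinate $v_{k,j}$ contributes a free entry of $A$ and admissibility is just $\det A\neq 0$, so $M=|\GL_{n}(\mathbb{F}_p)|=(p^n-1)\prod_{i=1}^{n-1}(p^n-p^i)$. For $\delta=2$ the first row of $A$ is forced to be $(v_{0,0}\bmod p,\,0,\ldots,0)$, so I would expand $\det A=(v_{0,0}\bmod p)\det A'$ along this row, where $A'$ is the lower-right $(n-2)\times(n-2)$ block; admissibility then amounts to $v_{0,0}\not\equiv 0\pmod p$ together with $A'\in\GL_{n-2}(\mathbb{F}_p)$. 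One must then carefully account for the tuple coordinates that do not appear in $A$: the $p$ lifts of each residue $v_{0,0}\bmod p$ to $\mathbb{Z}_{p^2}$, the free coordinates $v_{0,1},\ldots,v_{0,n-2}\in\mathbb{Z}_p$, and the free first-column entries $v_{k,0}/p$ for $1\le k\le n-2$. Collecting these factors yields $M=p(p-1)\,p^{2(n-2)}\,|\GL_{n-2}(\mathbb{F}_p)|$, and a direct rearrangement of the powers of $p$ shows this equals $(p^n-p^{n-1})\prod_{i=1}^{n-2}(p^{n-1}-p^i)$. Both cases are subsumed in the single expression $M=\bigl(p^n-p^{(\delta-1)(n-\delta+1)}\bigr)\prod_{i=1}^{n-\delta}\bigl(p^{n-\delta+1}-p^i\bigr)$.

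Finally I would assemble the pieces: substituting $|N(G)|=p^n M$ into $\mathcal{P}_f=(p^n!)^2/|N(G)|$ and using $p^n!=p^n\,(p^n-1)!$ gives
\[
\mathcal{P}_f=\frac{(p^n!)^2}{p^n M}=\frac{p^n!\,(p^n-1)!}{M}=\frac{p^n!\,(p^n-1)!}{\bigl(p^n-p^{(\delta-1)(n-\delta+1)}\bigr)\prod_{i=1}^{n-\delta}\bigl(p^{n-\delta+1}-p^i\bigr)},
\]
as claimed. I expect the main obstacle to be the $\delta=2$ bookkeeping in the third step: one must keep the ``invisible'' tuple coordinates (genuine degrees of freedom that nonetheless drop out of $A$) rigorously separated from the determinant constraint, and then verify that the resulting power-of-$p$ count collapses into the same closed form as the $\delta=1$ case.
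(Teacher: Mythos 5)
Your proposal is correct and takes essentially the same approach as the paper: the reduction $\mathcal{P}_f=(p^n!)^2/|\mathfrak{N}_{\mathfrak{S}_q}(G)|$ via Lemma~\ref{conjugate_lemma} and the $q!$ orderings of each conjugate group, followed by the partition of the normalizer into blocks $\mathcal{N}(V_0,\ldots,V_{n-\delta})$ of size $p^n$ using Lemmas~\ref{Atmost_lemma_T31} and~\ref{Atleast_lemma_T31}, and finally a count of admissible tuples. The only deviation is bookkeeping in the last step --- the paper counts admissible tuples uniformly in $\delta$ via the fiber-counting map $\phi$ together with a row-by-row count of nonsingular matrices, whereas you case-split on $\delta$ and expand $\det A$ along its first row --- and your resulting value of $M$ agrees with the paper's $|B|$ in both cases.
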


\begin{proof}
Let $g(X_1, X_2)$ be a permutation group polynomial that is of the form $f$, and let $H$ be its corresponding group. Then from Lemma \ref{conjugate_lemma}, $H = hGh^{-1}$ for some $h \in \mathfrak{S}_q$. Also, every conjugate $H$ of $G$ in $\mathfrak{S}_q$ gives the permutation group polynomials of the form $f$. Furthermore, we know that $p^n!$ permutation group polynomials of the same form can be obtained by permuting the elements of $H$. Therefore, 
\[
\mathcal{P}_{f} = p^n! \times |Conj_{\mathfrak{S}_q}(G)| = \frac{(p^n!)^2}{|\mathfrak{N}_{\mathfrak{S}_q}(G)|},
\]
where $Conj_{\mathfrak{S}_q}(G) = \{hGh^{-1} \mid h \in \mathfrak{S}_q\}$ is the set of all conjugates of $G$ in $\mathfrak{S}_q$, and $\mathfrak{N}_{\mathfrak{S}_q}(G) = \{h \in \mathfrak{S}_q \mid hGh^{-1} = G\}$ is the normalizer of $G$ in $\mathfrak{S}_q$. Now, let us compute the cardinality of $\mathfrak{N}_{\mathfrak{S}_q}(G)$. Since $G = \langle a_0, a_1, \ldots, a_{n-\delta} \rangle$, it follows that 
\[
\mathfrak{N}_{\mathfrak{S}_q}(G) = \bigcup \mathcal{N}(V_0, V_1, \ldots, V_{n-\delta}),
\]
where the union is taken over all the $(n-\delta+1)$-tuples $V_0, V_1, \ldots, V_{n-\delta} \in \mathbb{Z}_{p^{\delta}} \times \mathbb{Z}_p \times \mathbb{Z}_p \times \cdots \times \mathbb{Z}_p$. It is easy to see that for any tuples $(V_0, V_1, \ldots, V_{n-\delta}) \neq (V_0', V_1', \ldots, V_{n-\delta}')$, we have 
\[
\mathcal{N}(V_0, V_1, \ldots, V_{n-\delta}) \cap \mathcal{N}(V_0', V_1', \ldots, V_{n-\delta}') = \emptyset.
\]
Thus, 
\[
|\mathfrak{N}_{\mathfrak{S}_q}(G)| = \sum_{V_0, V_1, \ldots, V_{n-\delta} \in \mathbb{Z}_{p^{\delta}} \times \mathbb{Z}_p \times \mathbb{Z}_p \times \cdots \times \mathbb{Z}_p} |\mathcal{N}(V_0, V_1, \ldots, V_{n-\delta})|.
\]
Using  Lemma \ref{Atleast_lemma_T31}, we have
\[
|\mathfrak{N}_{\mathfrak{S}_q}(G)| = p^n |B|,
\]
where $B:=\{(V_0, V_1, \ldots, V_{n-\delta}) \in \mathbb{Z}_{p^{\delta}} \times \mathbb{Z}_p \times \mathbb{Z}_p \times \cdots \times \mathbb{Z}_p  : p^{\delta-1} \mid v_{k,0} \text{ for all } 1 \leq k \leq n-\delta \text{ and } \det A \neq 0\}$ and $A$ is the matrix defined in Lemma \ref{Atleast_lemma_T31}.

To compute the cardinality of $B$, in other words to count all the tuples \( V_0, V_1, \ldots, V_{n-\delta} \), where \( V_k = (v_{k,0}, v_{k,1}, \ldots, v_{k,n-\delta}) \in \mathbb{Z}_{p^{\delta}} \times \mathbb{Z}_p \times \mathbb{Z}_p \times \cdots \times \mathbb{Z}_p \), \( 0 \leq k \leq n-\delta \), \(p^{\delta-1} \mid v_{k,0}\) for \(k \geq 1\), and \(\det A \neq 0\), i.e., the set \(\{R_0, R_1, \ldots, R_{n-\delta}\}\) is linearly independent over \(\mathbb{Z}_p\), we define a map 
\[
\phi = (\phi_0, \phi_1, \ldots, \phi_{n-\delta}): S_0 \times S_1 \times S_1 \times \cdots \times S_1 \rightarrow S \times S \times S \times \cdots \times S
\]
where \( S_0 = \mathbb{Z}_{p^{\delta}} \times \mathbb{Z}_p \times \cdots \times \mathbb{Z}_p \), \( S_1 = p^{\delta-1}{\mathbb{Z}_{p^{\delta}}} \times \mathbb{Z}_p \times \cdots \times \mathbb{Z}_p \), \( S = \mathbb{Z}_p \times \mathbb{Z}_p \times \cdots \times \mathbb{Z}_p \), \(\phi_0: S_0 \rightarrow S\) is defined as 
\[
\phi_{0}(V_0) := (v_{0,0}  \pmod p, v_{0,1}p^{\delta-1} \pmod p, \ldots, v_{0,n-\delta}p^{\delta-1}\pmod p)=R_0,
\]
and \(\phi_k: S_1 \rightarrow S\) is defined as 
\[
\phi_{k}(V_k) := \left(\frac{v_{k,0}}{p^{\delta-1}}, v_{k,1}, \ldots, v_{k,n-\delta}\right)=R_k
\]
for \(k \geq 1\). 

We notice that $\phi_k$ is a bijection for $k\geq 1$, which renders 
\[
|\phi^{-1}((R_0, R_1, \ldots, R_{n-\delta}))| = \prod_{i=0}^{n-\delta}|\phi_i^{-1}(R_i)| = |\phi_0^{-1}(R_0)|.
\]
Since $\phi_{0}:\mathbb{Z}_{p^{\delta}} \times \mathbb{Z}_p \times \cdots \times \mathbb{Z}_p \rightarrow \mathbb{Z}_p \times \mathbb{Z}_p \times \cdots \times \mathbb{Z}_p$ such that
\[
\phi_0(v_{0,0}, v_{0,1}, \ldots, v_{0,n-\delta}) = (v_{0,0}  \pmod p, v_{0,1}p^{\delta-1} \pmod p, \ldots, v_{0,n-\delta}p^{\delta-1}\pmod p),
\] thus $\phi_{0}$ can be written as $\phi_{0}=(\phi_{00},\phi_{01},\ldots,\phi_{0n-\delta})$, where $\phi_{00}:\mathbb{Z}_{p^{\delta}}\rightarrow \mathbb{Z}_p$, $\phi_{00}(v_{0,0})=v_{0,0}  \pmod p$ and for each $1 \leq k \leq n-\delta$, $\phi_{0k}:\mathbb{Z}_p \rightarrow \mathbb{Z}_p$, which is defined as $\phi_{0k}(v_{0,k})=v_{0,k}p^{\delta-1} \pmod p$. All $\phi_{00}$ and $\phi_{0k}$, $1\leq k \leq n-\delta$, are group homomorphisms such that kernel of each homomorphism has exactly $p^{\delta-1}$ elements, thus all these maps are $p^{\delta-1}$ to $1$. Consequently,
\[
|\phi^{-1}((R_0, R_1, \ldots, R_{n-\delta}))| = |\phi_0^{-1}(R_0)|=\prod_{k=0}^{n-\delta}|\phi_{0k}^{-1}(v_{0,k})|=p^{\delta-1}p^{\delta-1}\cdots p^{\delta-1}=p^{(\delta-1)(n-\delta+1)}.
\]
Next, we count all matrices $A$ such that $\det A \neq 0$ (or equivalently, \(\{R_0, R_1, \ldots, R_{n-\delta}\}\) is linearly independent over \(\mathbb{Z}_p\)). For $\delta \in \{1,2\}$, $p^{\delta-1}\mid p$ and there are $\frac{p}{p^{\delta-1}}$ multiples of $p^{\delta-1}$ in $\mathbb{Z}_p$, which yields that there are exactly $p\times \underbrace{\frac{p}{p^{\delta-1}}\times \frac{p}{p^{\delta-1}}\times  \cdots \times \frac{p}{p^{\delta-1}}}_{(n-\delta)-\text{ times }}-1 =\dfrac{p^{n-\delta+1}-p^{(\delta-1)(n-\delta)}}{p^{(\delta-1)(n-\delta)}}$ choices for $R_{0}$ as $\det A\neq 0$ and $v_{0,0}\pmod p$ has $p$ choices. Furthermore, each of $\frac{v_{k,0}}{p^{\delta-1}}$ and $v_{k,i}$, where $1 \leq i,k \leq n-\delta$, also has $p$ choices. Thus, for given $R_0$ we have $p^{n-\delta+1}-p$ choices for $R_1$ such that $\det A\neq 0$. Continuing in this way we have 
\[
\dfrac{p^{n-\delta+1}-p^{(\delta-1)(n-\delta)}}{p^{(\delta-1)(n-\delta)}}\times (p^{n-\delta+1}-p) \times (p^{n-\delta+1}-p^{2})\times \cdots \times  (p^{n-\delta+1}-p^{n-\delta})  
\]
choices for $R_0, R_1, \ldots, R_{n-\delta}$ such that $\det A \neq 0$.
Therefore from the above discussion, 
\[
|B|=p^{(\delta-1)(n-\delta+1)}\dfrac{p^{n-\delta+1}-p^{(\delta-1)(n-\delta)}}{p^{(\delta-1)(n-\delta)}}\prod_{i=1}^{n-\delta}(p^{n-\delta+1}-p^{i})=(p^{n}-p^{(\delta-1)(n-\delta+1)})\prod_{i=1}^{n-\delta}(p^{n-\delta+1}-p^{i}).
\]
Thus, 
\[
|\mathfrak{N}_{\mathfrak{S}_q}(G)|=p^{n}(p^{n}-p^{(\delta-1)(n-\delta+1)})\prod_{i=1}^{n-\delta}(p^{n-\delta+1}-p^{i})
\]
and finally, we obtain
\[
\mathcal{P}_{f} =
       \dfrac{p^n!(p^n-1)!}{\left(p^{n}-p^{(\delta-1)(n-\delta+1)}\right)\displaystyle \prod_{i=1}^{n-\delta}\left(p^{n-\delta+1}-p^{i}\right)},
\]
and the proof is complete.
\end{proof}
The following theorem counts the permutation group polynomials equivalent to a given permutation group polynomial or number of permutation polynomial tuples equivalent to its corresponding permutation polynomial tuple.
\begin{thm}\label{Equivalent_PGP}
Let $f$ be a permutation group polynomial and its corresponding permutation polynomial tuple is $\underline{\beta}_f=(\beta_0,\beta_1,\ldots,\beta_{q-1})\in \mathfrak{S}_q^{q}$. Then there are $\frac{q(q!)}{|\mathfrak{C}_{\mathfrak{S}_q}(G)|}$ permutation group polynomials that are equivalent to $f$, where $G=\{\beta_0,\beta_1,\ldots,\beta_{q-1}\}$ and $\mathfrak{C}_{\mathfrak{S}_q}(G)$ is the centralizer of $G$ in $\mathfrak{S}_q$.
\end{thm}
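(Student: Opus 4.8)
The plan is to convert the count of permutation group polynomials equivalent to $f$ into a count of distinct permutation polynomial tuples equivalent to $\underline{\beta}_f$ whose entries form a group, using the bijection of Lemma~\ref{tpl}, and then to realize this collection of tuples as the image of an explicit map out of pairs $(\sigma,\lambda)\in\mathfrak{S}_q\times\mathfrak{S}_q$ whose fibers are controlled by the centralizer $\mathfrak{C}_{\mathfrak{S}_q}(G)$.

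First I would record the shape of an equivalent tuple. By Definition~\ref{D31}, every tuple equivalent to $\underline{\beta}_f$ has the form $(\sigma\beta_0\lambda,\ldots,\sigma\beta_{q-1}\lambda)$ for some $\sigma,\lambda\in\mathfrak{S}_q$, and since $\{\beta_0,\ldots,\beta_{q-1}\}=G$, its set of entries is $\sigma G\lambda$. The structural claim I would establish is that $\sigma G\lambda$ is a subgroup of $\mathfrak{S}_q$ if and only if $\lambda\sigma\in G$, in which case $\sigma G\lambda=\sigma G\sigma^{-1}$. The forward direction uses that a subgroup contains the identity, forcing $\sigma^{-1}\lambda^{-1}\in G$ and hence $\lambda\sigma\in G$; the converse follows from the coset identity $G(\lambda\sigma)=G$. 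I would then observe that the conjugate $\sigma G\sigma^{-1}$ inherits from $G$ the property that no non-identity element has a fixed point in $\F_q$ (if $\sigma g\sigma^{-1}$ fixes a point then $g$ fixes its $\sigma^{-1}$-image, so $g=I$), so the resulting tuple is genuinely a permutation group polynomial tuple. Consequently, via Lemma~\ref{tpl}, the permutation group polynomials equivalent to $f$ correspond exactly to the distinct tuples $(\sigma\beta_i\lambda)_i$ as $(\sigma,\lambda)$ ranges over $\mathcal{D}=\{(\sigma,\lambda)\in\mathfrak{S}_q\times\mathfrak{S}_q:\lambda\sigma\in G\}$.

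Next I would count $\mathcal{D}$ and then quotient by the redundancy. For each of the $q!$ choices of $\sigma$, the constraint $\lambda\sigma\in G$ forces $\lambda\in G\sigma^{-1}$, giving exactly $|G|=q$ admissible $\lambda$, so $|\mathcal{D}|=q\cdot q!$. The decisive step is to determine when two pairs yield the same tuple. Imposing $\sigma\beta_i\lambda=\sigma'\beta_i\lambda'$ for all $i$ and letting $\beta_i$ run over all of $G$ gives $\sigma g\lambda=\sigma' g\lambda'$ for every $g\in G$; taking $g=I$ yields $\sigma\lambda=\sigma'\lambda'$, and substituting back reduces the system to $ugu^{-1}=g$ for all $g\in G$, where $u=\sigma'^{-1}\sigma$, i.e.\ $u\in\mathfrak{C}_{\mathfrak{S}_q}(G)$. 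Conversely, each $u\in\mathfrak{C}_{\mathfrak{S}_q}(G)$ sends a pair $(\sigma,\lambda)$ to a new pair $(\sigma u^{-1},u\lambda)\in\mathcal{D}$ producing the same tuple, and this action of $\mathfrak{C}_{\mathfrak{S}_q}(G)$ on $\mathcal{D}$ is free. Hence the fibers of the tuple-forming map are precisely the centralizer orbits, each of size $|\mathfrak{C}_{\mathfrak{S}_q}(G)|$, and the number of distinct tuples, and therefore of equivalent permutation group polynomials, is $|\mathcal{D}|/|\mathfrak{C}_{\mathfrak{S}_q}(G)|=q\,q!/|\mathfrak{C}_{\mathfrak{S}_q}(G)|$.

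The main obstacle I anticipate lies in this last redundancy computation: one must show both that two pairs collapse to the same tuple exactly along centralizer cosets and that the centralizer action on $\mathcal{D}$ is free, so that all fibers share the common cardinality $|\mathfrak{C}_{\mathfrak{S}_q}(G)|$ that produces the denominator. By comparison, the structural identity $\sigma G\lambda=\sigma G\sigma^{-1}$ and the preservation of the no-fixed-point property under conjugation are routine, but they are essential for guaranteeing that the tuples being enumerated are exactly the permutation group polynomials equivalent to $f$, with no over- or under-counting.
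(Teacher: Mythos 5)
Your proposal is correct and follows essentially the same route as the paper: both parametrize the equivalent tuples as $\bigl(\sigma\beta_m\beta_i\sigma^{-1}\bigr)_m$ (your pairs $(\sigma,\lambda)$ with $\lambda\sigma\in G$ correspond exactly to the paper's pairs $(\sigma,\beta_i)$ via $\lambda=\beta_i\sigma^{-1}$), both observe that the entry set is a subgroup precisely under this coset condition, and both divide the $q\cdot q!$ parameter count by $|\mathfrak{C}_{\mathfrak{S}_q}(G)|$. The only difference is presentational: you phrase the redundancy as fibers of a free centralizer action on $\mathcal{D}$, while the paper partitions the tuple set into blocks $A_{\sigma}$ and shows two blocks are equal or disjoint, with equality exactly when $\sigma_{j'}^{-1}\sigma_j$ centralizes $G$.
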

\begin{proof}
Let $g(X_1,X_2)\in \F_q[X_1,X_2]$ be a permutation group polynomial, which is equivalent to $f$ and $\underline{\gamma}_g=(\gamma_0,\gamma_1,\ldots,\gamma_{q-1})$ is the permutation polynomial tuple of $g$. Then there exist $\sigma, \lambda \in \mathfrak{S}_q$ such that $\sigma (\beta_0,\beta_1,\ldots,\beta_{q-1})\lambda=(\gamma_0,\gamma_1,\ldots,\gamma_{q-1})$. Since one of permutations $\beta_i$ is the identity, there exists $0 \leq i \leq q-1$ such that $\sigma \lambda=\gamma_i$. Consequently, $$\sigma (\beta_0,\beta_1,\ldots,\beta_{q-1})\lambda=\sigma (\beta_0,\beta_1,\ldots,\beta_{q-1})\sigma^{-1}\gamma_i=(\gamma_0,\gamma_1,\ldots,\gamma_{q-1}).$$ As $g(X_1,X_2)$ is a permutation group polynomial, the set $\{ \gamma_0,\gamma_1,\ldots,\gamma_{q-1}\}$ forms a subgroup of $\mathfrak{S}_q$. Thus, $\sigma (\beta_0,\beta_1,\ldots,\beta_{q-1})\sigma^{-1}\gamma_i=(\gamma_0,\gamma_1,\ldots,\gamma_{q-1})$ implies that the right coset $\sigma \{ \beta_0,\beta_1,\ldots,\beta_{q-1}\}\sigma^{-1}\gamma_i$  of the subgroup $\sigma \{ \beta_0,\beta_1,\ldots,\beta_{q-1}\}\sigma^{-1}$ corresponding to $\gamma_i$ forms a subgroup of $\mathfrak{S}_q$, which is true if and only if $\gamma_i \in \{\sigma \beta_0\sigma^{-1},\sigma \beta_1\sigma^{-1},\ldots,\sigma \beta_{q-1}\sigma^{-1}\}$. Therefore, the number of permutation group polynomials that are equivalent to $f$ is same as the 
cardinality of the set 
$$A:=\{\sigma (\beta_0,\beta_1,\ldots,\beta_{q-1})\sigma^{-1} \sigma \beta_i \sigma^{-1} \mid \sigma \in \mathfrak{S}_q, 0 \leq i\leq q-1\}.$$
It is easy to see that $A=\displaystyle \bigcup_{k=1}^{q!}A_{\sigma_k}$, where $\{\sigma_1, \sigma_2, \dots, \sigma_{q!}\}=\mathfrak S_{q}$ and for each $\sigma_k \in \mathfrak S_q$
$$A_{\sigma_k}:=\{\sigma_k (\beta_0,\beta_1,\ldots,\beta_{q-1}){\sigma_k}^{-1} \sigma_k \beta_i {\sigma_k}^{-1} \mid 0 \leq i \leq q-1\}.$$
Since one of the permutations $\beta_i$ is the identity, it follows immediately that $|A_{\sigma_k}|=q$ for $1\leq k \leq q!$. We claim that for any $1\leq j \neq j' \leq q!$, either $A_{\sigma_j} \cap A_{\sigma_{j'}}=\emptyset$ or $A_{\sigma_j}= A_{\sigma_{j'}}$. If $A_{\sigma_j} \cap A_{\sigma_{j'}}=\emptyset$, then the claim holds trivially. Now, let $A_{\sigma_j} \cap A_{\sigma_{j'}}\neq \emptyset$, which implies that there exist some $\beta_i$ and $\beta_{i'}$, $0\leq i, i'\leq q-1$, such that 
\begin{equation} \label{equi}
\sigma_j(\beta_0,\beta_1,\ldots,\beta_{q-1})\sigma_j^{-1} \sigma_j\beta_i \sigma_j^{-1}=\sigma_{j'} (\beta_0,\beta_1,\ldots,\beta_{q-1}){\sigma_{j'}}^{-1} \sigma_{j'}\beta_{i'} {\sigma_{j'}}^{-1}.
\end{equation}
As one of the permutations $\beta_i$ is the identity, Equation~\eqref{equi} gives that  $\sigma_j\beta_i \sigma_j^{-1}=\sigma_{j'}\beta_{i'} {\sigma_{j'}}^{-1}$. Thus from Equation \eqref{equi}, we have 
$$\sigma_j(\beta_0,\beta_1,\ldots,\beta_{q-1})\sigma_j^{-1} =\sigma_{j'} (\beta_0,\beta_1,\ldots,\beta_{q-1}){\sigma_{j'}}^{-1} $$
which gives that
$A_{\sigma_j}=A_{\sigma_{j'}}$. Moreover, $A_{\sigma_j}=A_{\sigma_{j'}}$ if and only if $\sigma_{j'}^{-1} \sigma_j \in \mathfrak{C}_{\mathfrak{S}_q}(G)$. Consequently,
 $$A=\coprod A_{\sigma_{k'}},$$
 where the disjoint union $\coprod$ runs over a set of the distinct cosets of $\mathfrak{C}_{\mathfrak{S}_q}(G)$ in $ \mathfrak {S}_q$. Thus,
 $$|A|=\sum |A_{\sigma_{k'}}|=\frac{|\mathfrak{S}_q|}{|\mathfrak{C}_{\mathfrak{S}_q}(G)|}q.$$
 Hence, there are $\frac{q(q!)}{|\mathfrak{C}_{\mathfrak{S}_q}(G)|}$ permutation group polynomials that are equivalent to $f$.
\end{proof}

\begin{cor}
Let $f$ be a permutation group polynomial constructed in Theorem \ref{T31} and its corresponding permutation polynomial tuple is $\underline{\beta}_f=(\beta_0,\beta_1,\ldots,\beta_{q-1})\in \mathfrak{S}_q^{q}$. Then there are $q!$ permutation group polynomials equivalent to $f$.
\end{cor}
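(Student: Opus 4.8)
The plan is to reduce the statement to a single group-theoretic computation, namely that the centralizer $\mathfrak{C}_{\mathfrak{S}_q}(G)$ of the group $G=\{\beta_0,\ldots,\beta_{q-1}\}$ has order exactly $q$. Indeed, Theorem~\ref{Equivalent_PGP} gives that the number of permutation group polynomials equivalent to $f$ equals $\frac{q(q!)}{|\mathfrak{C}_{\mathfrak{S}_q}(G)|}$, so once $|\mathfrak{C}_{\mathfrak{S}_q}(G)|=q$ is established the count collapses to $q!$. Thus the entire proof amounts to pinning down the centralizer.

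For the lower bound, I would note that $G$ is Abelian by Theorem~\ref{T31}; hence every element of $G$ commutes with $G$, giving $G\subseteq \mathfrak{C}_{\mathfrak{S}_q}(G)$ and therefore $|\mathfrak{C}_{\mathfrak{S}_q}(G)|\geq |G|=q$. The work is in the matching upper bound. Here I would exploit the defining feature of $G$ from Theorem~\ref{T31}: no non-identity permutation of $G$ fixes any element of $\F_q$. Combined with $|G|=q=|\F_q|$, this means $G$ acts freely on $\F_q$, so each $G$-orbit has size $|G|=q$; since there are only $q$ points, there is a single orbit and $G$ acts regularly (sharply transitively) on $\F_q$. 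In particular, for every $x\in\F_q$ there is a unique $g\in G$ with $g(c_0)=x$.

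With regularity in hand, I would bound $|\mathfrak{C}_{\mathfrak{S}_q}(G)|$ by an injectivity argument in the spirit of Lemma~\ref{Atmost_lemma_T31}. Consider the evaluation map $\mathfrak{C}_{\mathfrak{S}_q}(G)\to \F_q$, $\psi\mapsto \psi(c_0)$. If $\psi_1(c_0)=\psi_2(c_0)$ for $\psi_1,\psi_2\in\mathfrak{C}_{\mathfrak{S}_q}(G)$, set $\phi=\psi_2^{-1}\psi_1\in\mathfrak{C}_{\mathfrak{S}_q}(G)$, so that $\phi(c_0)=c_0$. For arbitrary $x\in\F_q$, write $x=g(c_0)$ with $g\in G$ by regularity; then, since $\phi$ commutes with $g$, we get $\phi(x)=\phi(g(c_0))=g(\phi(c_0))=g(c_0)=x$. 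Hence $\phi=I$ and $\psi_1=\psi_2$, so the evaluation map is injective and $|\mathfrak{C}_{\mathfrak{S}_q}(G)|\leq |\F_q|=q$. Together with the lower bound this forces $|\mathfrak{C}_{\mathfrak{S}_q}(G)|=q$ (indeed $\mathfrak{C}_{\mathfrak{S}_q}(G)=G$), and substituting into the formula from Theorem~\ref{Equivalent_PGP} yields exactly $q!$ equivalent permutation group polynomials.

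The main obstacle is the upper bound $|\mathfrak{C}_{\mathfrak{S}_q}(G)|\leq q$; everything hinges on translating the ``no fixed points'' condition of Theorem~\ref{T31} into regularity of the $G$-action and then using that any centralizing permutation is completely determined by its value at the single base point $c_0$. The lower bound and the final arithmetic are routine.
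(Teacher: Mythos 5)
Your proof is correct, but its key step takes a genuinely different route from the paper's. Both you and the paper reduce the corollary, via Theorem~\ref{Equivalent_PGP}, to showing $|\mathfrak{C}_{\mathfrak{S}_q}(G)|=q$. The paper does this by observing that $\mathfrak{C}_{\mathfrak{S}_q}(G)=\mathcal{N}(V_0,\ldots,V_{n-\delta})$ for the standard-basis tuples $V_i$ (i.e., the conditions $ha_kh^{-1}=a_k$ for all $k$) and then invoking Lemma~\ref{Atleast_lemma_T31}, whose hypotheses hold here since the associated matrix $A$ is the identity, so that $|\mathcal{N}(V_0,\ldots,V_{n-\delta})|=p^n=q$. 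You instead argue from first principles: the fixed-point-freeness in Theorem~\ref{T31} together with $|G|=q=|\F_q|$ forces $G$ to act regularly on $\F_q$; the centralizer of a transitive subgroup acts semiregularly (any centralizing permutation fixing $c_0$ fixes every $g(c_0)$, hence is the identity), giving $|\mathfrak{C}_{\mathfrak{S}_q}(G)|\le q$; and abelianness of $G$ gives $G\subseteq \mathfrak{C}_{\mathfrak{S}_q}(G)$, so in fact $\mathfrak{C}_{\mathfrak{S}_q}(G)=G$. Your route is more elementary and more general: it bypasses the $\mathcal{N}$-machinery entirely and applies verbatim to any permutation group polynomial whose associated group is abelian --- in particular it would also yield the paper's final theorem (the count of $q!$ for $e$-Klenian polynomials) without appealing to Lemma~\ref{Atleast_lemma}. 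What the paper's route buys is economy given what is already proved: once Lemma~\ref{Atleast_lemma_T31} is in place for the enumeration in Theorem~\ref{T41}, the corollary becomes a two-line specialization.
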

\begin{proof}
It is easy to verify that for $G$ defined in Theorem \ref{T31}, $\mathfrak{C}_{\mathfrak{S}_q}(G)=\mathcal{N}(V_0, V_1, \ldots, V_{n-\delta})$, where $V_i=(0,\ldots,0,1,0,\ldots,0)$ denotes the vector with a single nonzero entry in the
$i$-th position. Moreover, from Lemma \ref{Atleast_lemma_T31}, we have $|\mathcal{N}(V_0, V_1, \ldots, V_{n-\delta})|=p^n=q$, thus $|\mathfrak{C}_{\mathfrak{S}_q}(G)|=q$, which completes the proof. 
  \end{proof}

Next, we enumerate all $e$-Klenian polynomials over $\mathbb{F}_q$, for $e \geq 1$,  with the following lemmas serving as key tools in this process.
\begin{lem}\label{L41}
Let $q=p^n$,  where $n \geq 2$. Let \( g(X_1,X_2) \in \F_q[X_1,X_2] \) be a permutation group polynomial and its corresponding group $H < \mathfrak{S}_q$ be isomorphic to $G$, where $G$ is the subgroup of $\mathfrak{S}_q$ defined in Lemma \ref{JJ_tpl}. Then there exists $\mu \in \mathfrak{S}_q$ such that $\mu H \mu^{-1}=G$.
\end{lem}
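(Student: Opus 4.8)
The plan is to mirror the strategy of Lemma~\ref{conjugate_lemma}, exploiting that the $e$-Klenian group $G = \langle a, b\rangle$ of Lemma~\ref{JJ_tpl} is generated by only two elements and that, writing $\ell = p^e$ and $t = p^{n-e}$, the generators act by $a(c_{i\ell+r}) = c_{i\ell + ((r+1)\bmod \ell)}$ and $b(c_{i\ell + r}) = c_{((i+1)\bmod t)\ell + r}$ for $0 \le i \le t-1$, $0 \le r \le \ell - 1$. First I would record that $G \cong \mathbb{Z}_{\ell} \times \mathbb{Z}_{t}$, so $H \cong \mathbb{Z}_\ell \times \mathbb{Z}_t$ as well, and choose a decomposition $H = \langle b_0\rangle \times \langle b_1\rangle$ with $|b_0| = \ell$, $|b_1| = t$ and $\langle b_0\rangle \cap \langle b_1\rangle = \{I\}$. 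Since $H$ is associated with a permutation group polynomial, every non-identity element of $H$ is fixed-point-free, so $H$ acts regularly on $\F_q$; by \cite[Lemma 15]{JJ_2023} the order-$\ell$ element $b_0$ is a product of $t$ disjoint $\ell$-cycles (the cycle type of $a$) and the order-$t$ element $b_1$ a product of $\ell$ disjoint $t$-cycles (the cycle type of $b$). As $a$ and $b_0$ share the same cycle type, there is $\mu_0 \in \mathfrak{S}_q$ with $\mu_0 b_0 \mu_0^{-1} = a$, and then $\mu_0 H \mu_0^{-1} = \langle a, b_1'\rangle$, where $b_1' := \mu_0 b_1 \mu_0^{-1}$ commutes with $a$, has order $t$, has all nontrivial powers fixed-point-free, and satisfies $\langle a\rangle \cap \langle b_1'\rangle = \{I\}$.

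The heart of the argument is to conjugate $b_1'$ to $b$ by an element $\mu_1$ centralizing $a$. Since $b_1'$ commutes with $a$, it permutes the $a$-orbits (the blocks $B_i = \{c_{i\ell}, \ldots, c_{i\ell + \ell - 1}\}$, $0 \le i \le t-1$). I would first show the induced permutation on blocks is a single $t$-cycle: if $b_1'^{k_1}(c_0)$ and $b_1'^{k_2}(c_0)$ (with $0 \le k_1 < k_2 \le t-1$) lay in a common block $B_i$, then $b_1'^{k_2-k_1}$ would fix $B_i$ setwise and, commuting with the single $\ell$-cycle $a|_{B_i}$ whose centralizer is $\langle a|_{B_i}\rangle$, would agree with some $a^s$ on $B_i$; then $a^{-s}b_1'^{k_2-k_1}$ fixes a point, hence is $I$ by fixed-point-freeness, forcing $b_1'^{k_2-k_1} = a^s \in \langle a\rangle \cap \langle b_1'\rangle = \{I\}$, a contradiction. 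Consequently $b_1'^k(c_0) = c_{m_k \ell + f_k}$ with $m_0, m_1, \ldots, m_{t-1}$ a permutation of $\{0, \ldots, t-1\}$ (so $m_0 = 0$, $f_0 = 0$), and commutation with $a$ gives $b_1'^k(a^s(c_0)) = c_{m_k\ell + ((f_k + s)\bmod \ell)}$; by regularity these points exhaust $\F_q$ as $(k,s)$ runs over $\mathbb{Z}_t \times \mathbb{Z}_\ell$.

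I would then define $\mu_1 \in \mathfrak{S}_q$ by $\mu_1(c_{m_k\ell + ((f_k+s)\bmod\ell)}) = c_{k\ell + s}$, which is a well-defined bijection by the previous paragraph, and verify the two identities directly. For $a$: applying $\mu_1^{-1}$, then $a$, then $\mu_1$ sends $c_{k\ell+s} \mapsto c_{k\ell + ((s+1)\bmod\ell)} = a(c_{k\ell+s})$, so $\mu_1 a \mu_1^{-1} = a$. For $b_1'$: using $b_1'^{k+1}(a^s(c_0)) = a^s(b_1'^{k+1}(c_0)) = c_{m_{k+1}\ell + ((f_{k+1}+s)\bmod \ell)}$ one gets $\mu_1 b_1' \mu_1^{-1}(c_{k\ell+s}) = c_{((k+1)\bmod t)\ell + s} = b(c_{k\ell+s})$, so $\mu_1 b_1'\mu_1^{-1} = b$. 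Setting $\mu = \mu_1\mu_0$ then yields $\mu H \mu^{-1} = \langle a, b\rangle = G$, as required.

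The main obstacle is the cycle-structure analysis of $b_1'$ in the second paragraph: establishing that its action on the $a$-blocks is a single $t$-cycle, since this is what forces $b_1'$ into a form that a centralizer-of-$a$ element can standardize. This step is precisely where the fixed-point-free hypothesis, the commutation $ab_1' = b_1'a$, and the fact that the centralizer of a single $\ell$-cycle is generated by that cycle all must be combined; the well-definedness of $\mu_1$ likewise rests on the regularity of the action. Unlike Lemma~\ref{conjugate_lemma}, no induction over the number of generators is needed here because $G$ has only two generators, but one must now work with cycles of length $t = p^{n-e}$ rather than length $p$.
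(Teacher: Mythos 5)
Your proof is correct, and its overall architecture coincides with the paper's: first conjugate one generator of $H$ onto $a$ using equality of cycle types, then construct a second conjugation that centralizes $a$ and carries the resulting element $b_1'$ onto $b$; your map $\mu_1$ is, up to notation, exactly the paper's map $\phi$, and your final composition $\mu=\mu_1\mu_0$ matches the paper's $\mu=\phi\sigma$. Where you genuinely diverge is in how the crucial structural fact is established --- namely, that the $\langle b_1'\rangle$-orbit of $c_0$ meets each $a$-block exactly once (equivalently, that the induced action on blocks is a single $t$-cycle). The paper obtains the corresponding cycle decomposition of $d=b_1'$ by deferring to ``a similar argument as in Lemma~\ref{conjugate_lemma}'', i.e.\ a step-by-step tracking of images $d(c_0), d^2(c_0),\ldots$, ruling out collisions one at a time via fixed-point-freeness. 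You instead give a self-contained conceptual argument: $b_1'$ permutes the $a$-orbits, a power of $b_1'$ stabilizing a block must restrict there to a power of $a|_{B_i}$ because the centralizer of a full $\ell$-cycle in the symmetric group on its support is the cyclic group it generates, and then fixed-point-freeness plus $\langle a\rangle\cap\langle b_1'\rangle=\{I\}$ force that power to be trivial. Your route buys a cleaner, shorter justification of the middle step that does not lean on the (fairly long) analysis of Lemma~\ref{conjugate_lemma}; the paper's route buys uniformity, since the same point-tracking template is reused across both conjugacy lemmas. Both are complete and correct.
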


\begin{proof}
Let \( f_{e}(X_1,X_2) \) be the \(e\)-Klenian polynomial and \( G \) be the corresponding group of \( f_e \) over \(\mathbb{F}_{q}\). Then from Lemma \ref{JJ_tpl},  \( G = \langle a, b \rangle \), where 
\[
a = (c_0, c_1, \ldots, c_{p^e-1})(c_{p^e}, c_{p^e+1}, \ldots, c_{2p^e-1}) \cdots (c_{p^n-p^e}, c_{p^n-p^e+1}, \ldots, c_{p^n-1}),
\]
and
\[
b = (c_0, c_{p^e}, \ldots, c_{p^n-p^e})(c_1, c_{p^e+1}, \ldots, c_{p^n-p^e+1}) \cdots (c_{p^e-1}, c_{2p^e-1}, \ldots, c_{p^n-1}).
\]

It is clear that \( H \cong \mathbb{Z}_{p^e} \times \mathbb{Z}_{p^{n-e}} \) as $H\cong G$ and $G \cong \mathbb{Z}_{p^e} \times \mathbb{Z}_{p^{n-e}}$. Thus \( H = \langle a_1, b_1 \rangle \), where \( a_1 \) is a product of \( p^{n-e} \) disjoint cycles, each of length \( p^e \), and \( b_1 \) is a product of \( p^e \) disjoint cycles, each of length \( p^{n-e} \), and they commute with each other. 

Now, we shall show that \( G = \mu H \mu^{-1} \) for some \( \mu \in \mathfrak{S}_{q} \). Since \( a \) and \( a_1 \) have the same cyclic structure, there must exist \( \sigma \in \mathfrak{S}_{q} \) such that \( a = \sigma a_1 \sigma^{-1} \). This implies that \(  \sigma H \sigma^{-1} = \langle a, d \rangle \), where \( d = \sigma b_1 \sigma^{-1} \). As \( H \) is the group corresponding to a permutation group polynomial, so is \( \sigma H \sigma^{-1} \). Therefore, \( \sigma H \sigma^{-1} \) must satisfy the following properties
\begin{enumerate}
    \item \( a \) and \( d \) commute, and \( d \notin \langle a \rangle \),
    \item \( d \) is a product of \( p^e \) disjoint cycles, each of length \( p^{n-e} \), and
    \item no element, except the identity, has a fixed point.
\end{enumerate}
Using these properties and similar argument as in Lemma \ref{conjugate_lemma}, the cyclic structure of \( d \) can be expressed as
\begin{equation*}
\begin{split}
d = &\ (c_{0}, c_{i_1p^e + j_1}, c_{i_2p^e + j_2}, \ldots, c_{i_{p^{n-e}-1}p^e + j_{p^{n-e}-1}}) \\
&\ (c_{1}, c_{i_1p^e + (j_1 + 1) \pmod {p^e}}, c_{i_2p^e + (j_2 + 1) \pmod {p^e}}, \ldots, c_{i_{p^{n-e}-1}p^e + (j_{p^{n-e}-1} + 1) \pmod {p^e}}) \\
&\ \cdots \\
&\ (c_{p^e-1}, c_{i_1p^e + (j_1 + p^e - 1) \pmod {p^e}}, c_{i_2p^e + (j_2 + p^e - 1) \pmod {p^e}}, \ldots, c_{i_{p^{n-e}-1}p^e + (j_{p^{n-e}-1} + p^e - 1) \pmod {p^e}}),
\end{split}
\end{equation*}
where \( \{i_1, i_2, \ldots, i_{p^{n-e}-1}\} = \{1, 2, \ldots, p^{n-e}-1\} \) and \( j_1, j_2, \ldots, j_{p^{n-e}-1} \in \{0, 1, \ldots, p^e-1\} \).

Now, for a fixed \( k \in \{0, 1, \ldots, p^e-1\} \), we define a map \( \phi \in \mathfrak{S}_q \) as follows
\[
\phi(c_{i_tp^e + (j_t + k) \pmod {p^e}}) = c_{tp^e +  k}
\]
for \( t \in \{1, 2, \ldots, p^{n-e}-1\} \), and \( \phi(c_j) = c_j \) for all \( j \in \{0, 1, \ldots, p^e-1\} \).
One can easily show that $\phi^{-1} a \phi= a$ and $\phi d \phi^{-1} = b$. Therefore, $\phi$ satisfies \( \phi a \phi^{-1} = a \),  \( \phi d \phi^{-1} = b \) and thus $G=\mu H \mu^{-1}$, where $\mu=\phi\sigma$.
\end{proof}
\begin{rmk}\label{R41}
From the above lemma, we can conclude that any permutation group polynomial over $\F_{p^2}$ is an $e$-Klenian polynomial.
\end{rmk}

From Lemma \ref{L41}, we observe that $e$-Klenian polynomial and $(n-e)$-Klenian polynomial over $\F_{q}$ are of the same form. Therefore, without loss of generality, we assume \( n-e \geq e \) for $e$-Klenian polynomials over $\F_{q}$.

Next, let \( G = \langle a, b \rangle \) be the subgroup of $\mathfrak{S}_q$ defined as in Lemma \ref{JJ_tpl}. For fixed \( i, u \in \{0, 1, \ldots, p^e-1\} \) and \( j, v \in \{0, 1, \ldots, p^{n-e}-1\} \), where $e\geq 1$, we define the set
\[
\mathcal{N}(i, j, u, v) := \{ h \in \mathfrak{S}_q \mid h a h^{-1} = a^i b^j, \, h b h^{-1} = a^u b^v \}.
\]

\begin{lem}\label{Atmost_lemma}
Let \( q = p^n \) and \( G = \langle a, b \rangle \) be a subgroup of \( \mathfrak{S}_q \) as defined in Lemma \ref{JJ_tpl}. Moreover, let \( i, u \in \{0, 1, \ldots, p^e-1\} \) and \( j, v \in \{0, 1, \ldots, p^{n-e}-1\} \) be fixed integers. Then \( |\mathcal{N}(i, j, u, v)| \leq p^n \).
\end{lem}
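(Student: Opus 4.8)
The plan is to mirror the argument of Lemma~\ref{Atmost_lemma_T31}. I would define the evaluation map
\[
\zeta : \mathcal{N}(i,j,u,v) \to \F_q, \qquad \zeta(h) := h(c_0),
\]
and reduce the whole statement to showing that $\zeta$ is injective: since $|\F_q| = p^n$, injectivity of $\zeta$ immediately yields $|\mathcal{N}(i,j,u,v)| \le p^n$. If $\mathcal{N}(i,j,u,v)=\emptyset$ the bound is trivial, so I would assume the set is nonempty.

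To prove injectivity, suppose $h_1,h_2\in\mathcal{N}(i,j,u,v)$ satisfy $h_1(c_0)=h_2(c_0)$, and set $g:=h_2^{-1}h_1$. From the defining relations $h_1 a h_1^{-1}=a^i b^j=h_2 a h_2^{-1}$ and $h_1 b h_1^{-1}=a^u b^v=h_2 b h_2^{-1}$ one obtains $gag^{-1}=a$ and $gbg^{-1}=b$; hence $g$ commutes with both generators and therefore with every element of $G=\langle a,b\rangle$. Moreover $g(c_0)=h_2^{-1}(h_1(c_0))=h_2^{-1}(h_2(c_0))=c_0$. Thus the problem is reduced to the claim that \emph{any $g\in\mathfrak{S}_q$ which centralizes $G$ and fixes $c_0$ must equal the identity $I$}.

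The key step is this last claim, and the main obstacle is to pass from ``$g$ fixes $c_0$'' to ``$g$ fixes every point of $\F_q$''. Here I would exploit that $G$ acts freely on $\F_q$ (by Lemma~\ref{JJ_tpl}, no non-identity element of $G$ has a fixed point) together with $|G|=q=|\F_q|$, so the action of $G$ on $\F_q$ is regular: the stabilizer of $c_0$ is trivial, whence the orbit of $c_0$ is all of $\F_q$, and for each $c_t\in\F_q$ there is a (unique) $w\in G$ with $w(c_0)=c_t$. Since $g$ commutes with $w$, we get
\[
g(c_t)=g(w(c_0))=w(g(c_0))=w(c_0)=c_t,
\]
so $g=I$ and hence $h_1=h_2$. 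This establishes injectivity of $\zeta$ and completes the proof. Alternatively, one may avoid invoking regularity abstractly and instead run the explicit induction on $t$ used in Lemma~\ref{Atmost_lemma_T31}, expanding each index via the cycle structure of $a$ and $b$ from Lemma~\ref{JJ_tpl} and repeatedly using $gag^{-1}=a$ and $gbg^{-1}=b$ to propagate the fixed-point property from $c_0$ to all of $\F_q$; the regular-action argument is simply the clean packaging of that computation.
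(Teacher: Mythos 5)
Your proposal is correct, and it reaches the paper's bound by a genuinely different mechanism for the key step. Both you and the paper define the evaluation map $h \mapsto h(c_0)$ on $\mathcal{N}(i,j,u,v)$ and reduce everything to its injectivity; the difference lies in how injectivity is proved. The paper works with $h_1$ and $h_2$ directly: using the explicit cycle structures of $a$ and $b$ from Lemma~\ref{JJ_tpl}, it extracts the formula $h(c_{rp^e+s}) = (a^u b^v)^r (a^i b^j)^s\bigl(h(c_0)\bigr)$, so any element of $\mathcal{N}(i,j,u,v)$ is completely determined by its value at $c_0$, and two elements agreeing there agree everywhere. You instead pass to $g = h_2^{-1}h_1$, note that $g$ centralizes $G$ and fixes $c_0$, and then invoke regularity of the $G$-action: since no non-identity element of $G$ has a fixed point and $|G| = q$, the orbit of $c_0$ is all of $\F_q$, and the one-line computation $g(w(c_0)) = w(g(c_0)) = w(c_0)$ forces $g = I$. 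This reduction-to-the-centralizer is in fact the strategy the paper uses for the \emph{other} group, in Lemma~\ref{Atmost_lemma_T31} (where the final step is an explicit induction on indices rather than a regularity argument), so your proof effectively unifies the two lemmas and replaces the index-chasing by a conceptual fact; it also makes transparent that the bound $|\mathcal{N}| \le q$ holds for any order-$q$ subgroup of $\mathfrak{S}_q$ acting freely on $\F_q$, independent of cycle structure. What the paper's computational route buys is the explicit formula for $h$ in terms of $h(c_0)$, which serves as the blueprint for constructing the maps $h_d$ in the companion Lemma~\ref{Atleast_lemma}.
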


\begin{proof}
Let \( i, u \in \{0,1,\ldots,p^e-1\} \), and \( j, v \in \{0,1,\ldots,p^{n-e}-1\} \) be fixed integers. If \( \mathcal{N}(i,j,u,v) = \emptyset \), then there is nothing to prove. Hence, we assume that \( \mathcal{N}(i,j,u,v) \neq \emptyset \).  We define a map \( \psi: \mathcal{N}(i,j,u,v) \rightarrow \mathbb{F}_{q} \) as  
\[
  \psi(g) = g(c_0).
\]
It is sufficient to show that \( \psi \) is injective. Suppose that there exist \( h_1, h_2 \in \mathcal{N}(i,j,u,v) \) such that \( \psi(h_1) = \psi(h_2) \), i.e., \( h_1(c_0) = h_2(c_0) \).  Since \( h_1, h_2 \in \mathcal{N}(i,j,u,v) \), the structure of \( a \) and \( b \) implies the following decomposition into cycles  
\begin{align*}
(h_1(c_0), h_1(c_{p^e}), \ldots, h_1(c_{p^n-p^e}))(h_1(c_1), h_1(c_{p^e+1}), \ldots, h_1(c_{p^n-p^e+1})) \cdots &\\ (h_1(c_{p^e-1}), h_1(c_{2p^e-1}), \ldots, h_1(c_{p^n-1})) &= a^u b^v, \\
(h_1(c_0), h_1(c_1), \ldots, h_1(c_{p^e-1}))(h_1(c_{p^e}), h_1(c_{p^e+1}), \ldots, h_1(c_{2p^e-1})) \cdots & \\ (h_1(c_{p^n-p^e}), h_1(c_{p^n-p^e+1}), \ldots, h_1(c_{p^n-1})) &= a^i b^j.
\end{align*}

Similarly, for \( h_2 \), we have
\begin{align*}
(h_2(c_0), h_2(c_{p^e}), \ldots, h_2(c_{p^n-p^e}))(h_2(c_1), h_2(c_{p^e+1}), \ldots, h_2(c_{p^n-p^e+1})) \cdots &\\(h_2(c_{p^e-1}), h_2(c_{2p^e-1}), \ldots, h_2(c_{p^n-1})) & = a^u b^v,\\
(h_2(c_0), h_2(c_1), \ldots, h_2(c_{p^e-1}))(h_2(c_{p^e}), h_2(c_{p^e+1}), \ldots, h_2(c_{2p^e-1})) \cdots & \\(h_2(c_{p^n-p^e}), h_2(c_{p^n-p^e+1}), \ldots, h_2(c_{p^n-1})) & = a^i b^j.
\end{align*}

From the above cyclic structures and the fact that \( h_1(c_0) = h_2(c_0) \), we observe the following 
\[
h_1(c_s) = (a^i b^j)^s(h_1(c_0)) = (a^i b^j)^s(h_2(c_0)) = h_2(c_s), \quad \text{for } 0 \leq s \leq p^e-1.
\]
Moreover,  
\[
h_1(c_{rp^e+s}) = (a^u b^v)^r(h_1(c_s)) \text{ and } h_2(c_{rp^e+s}) = (a^u b^v)^r(h_2(c_s)),
\]
for \( 0 \leq s \leq p^e-1 \) and \( 1 \leq r \leq p^{n-e}-1 \). Since \( h_1(c_s) = h_2(c_s) \) for \( 0 \leq s \leq p^e-1 \), we have
\[
h_1(c_{rp^e+s})=(a^ub^v)^{r}(h_1(c_s))=(a^ub^v)^{r}(h_2(c_s))=h_2(c_{rp^e+s}),
\]
for \( 0 \leq s \leq p^e-1 \) and \( 1 \leq r \leq p^{n-e}-1 \).  Thus, \( h_1(c_t) = h_2(c_t) \) for all \( 0 \leq t \leq p^n-1 \), which implies \( h_1 = h_2 \).  Therefore, the map \( \psi \) is injective, and we conclude that \( |\mathcal{N}(i,j,u,v)| \leq p^n \).
\end{proof}
\begin{lem}\label{Atleast_lemma}
Let \( q = p^n \) and \( G = \langle a, b \rangle \) be a subgroup of \( \mathfrak{S}_q \) as defined in Lemma \ref{JJ_tpl}. Furthermore, let \( i, u \in \{0, 1, \ldots, p^e-1\} \) and \( j, v \in \{0, 1, \ldots, p^{n-e}-1\} \) such that \( p^{n-2e} \mid j \). Then, \( \mathcal{N}(i, j, u, v) \neq \emptyset \) if and only if 
\[
ip^{\alpha} \not\equiv \beta u \pmod{p^e} \quad \text{or} \quad jp^{\alpha} \not\equiv \beta v \pmod{p^{n-e}}
\]
for all \( (\beta, \alpha) \in \mathbb{Z} \times \{0, 1, \dots, e-1\} \).  In this case, \( |\mathcal{N}(i, j, u, v)| = p^n \).
\end{lem}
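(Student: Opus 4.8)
The plan is to reduce the whole statement to a question about automorphisms of the abelian group \( G=\langle a,b\rangle\cong\mathbb{Z}_{p^e}\times\mathbb{Z}_{p^{n-e}} \) and then to translate that into the displayed congruences. First I would observe that any \( h\in\mathcal{N}(i,j,u,v) \) must normalize \( G \): conjugation by \( h \) sends the generators \( a,b \) into \( G \), so \( hGh^{-1}\subseteq G \), and equality follows by comparing orders. Hence \( h \) induces the endomorphism \( \theta \) of \( G \) determined by \( \theta(a)=a^{i}b^{j} \) and \( \theta(b)=a^{u}b^{v} \), and this \( \theta \) is forced to be an automorphism. Conversely, since every non-identity element of \( G \) is fixed-point-free (Lemma \ref{JJ_tpl}) and \( |G|=q \), the group \( G \) acts on \( \mathbb{F}_q \) as a regular permutation group (free, hence transitive with a single orbit of size \( q \)); therefore any automorphism \( \theta \) of \( G \) is realized by conjugation by some permutation, namely the one tracking \( \theta \) through the regular action. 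This yields the key equivalence: \( \mathcal{N}(i,j,u,v)\neq\emptyset \) if and only if the assignment \( a\mapsto a^{i}b^{j},\ b\mapsto a^{u}b^{v} \) extends to an automorphism of \( G \).

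Next I would make this automorphism criterion explicit in congruences. Well-definedness of \( \theta \) forces \( \mathrm{ord}(a^{i}b^{j})\mid p^{e} \), which — because \( n-e\ge e \) — is precisely the standing hypothesis \( p^{\,n-2e}\mid j \); the parallel requirement on \( a^{u}b^{v} \) holds automatically. Writing a general element as \( a^{x}b^{y} \), one has \( a^{x}b^{y}\in\ker\theta \) exactly when \( xi+yu\equiv0\ (p^{e}) \) and \( xj+yv\equiv0\ (p^{n-e}) \), so \( \theta \) is an automorphism iff this kernel is trivial. To match the displayed condition I would relate nontriviality of \( \ker\theta \) to the existence of a kernel element of the form \( a^{p^{\alpha}}b^{-\beta} \) with \( 0\le\alpha\le e-1 \): projecting \( \ker\theta \) onto \( \langle a\rangle \) gives a cyclic group \( \langle a^{p^{\alpha}}\rangle \), and lifting a generator supplies such an element, while \( \langle a\rangle\cap\langle b\rangle=\{1\} \) guarantees it is nontrivial (triviality would force \( \alpha\ge e \)). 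Rewriting \( a^{p^{\alpha}}b^{-\beta}\in\ker\theta \) as \( \theta(a^{p^{\alpha}})=\theta(b^{\beta}) \) produces exactly \( ip^{\alpha}\equiv\beta u\ (p^{e}) \) together with \( jp^{\alpha}\equiv\beta v\ (p^{n-e}) \), and negating the existence of such a pair \( (\beta,\alpha) \) reproduces the stated disjunction.

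Finally, for the cardinality, once \( \mathcal{N}(i,j,u,v) \) is nonempty I would note that any two of its elements differ by an element centralizing \( G \), so it is a single coset of \( \mathfrak{C}_{\mathfrak{S}_q}(G) \); since \( G \) is regular, \( |\mathfrak{C}_{\mathfrak{S}_q}(G)|=|G|=p^{n} \), which together with the bound \( |\mathcal{N}(i,j,u,v)|\le p^{n} \) from Lemma \ref{Atmost_lemma} forces equality. Alternatively one can exhibit \( p^{n} \) elements directly, in the spirit of the explicit map \( h_{d} \) built in the proof of Lemma \ref{Atleast_lemma_T31}, parameterized by the chosen image of \( c_{0} \), and invoke the injectivity of \( g\mapsto g(c_{0}) \). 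I expect the genuine obstacle to be the middle step: pinning down \( \ker\theta \) under the coupling between the two cyclic factors, and in particular showing that nontriviality of \( \ker\theta \) is faithfully captured by the restricted range \( 0\le\alpha\le e-1 \). The delicate situation is when the projection of \( \ker\theta \) to \( \langle a\rangle \) is trivial (so the kernel sits inside \( \langle b\rangle \)); here one must use the divisibility \( p^{\,n-2e}\mid j \) and treat the boundary case \( n=2e \) separately to confirm that the order constraints on \( a^{i}b^{j} \) and \( a^{u}b^{v} \) are indeed reflected in the congruence system, which is exactly the point where the reformulation must be checked most carefully.
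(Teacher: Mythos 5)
Your architecture is genuinely different from the paper's, and two of its three steps are correct and arguably cleaner. The reduction ``$\mathcal{N}(i,j,u,v)\neq\emptyset$ iff $a\mapsto a^ib^j,\ b\mapsto a^ub^v$ extends to an automorphism $\theta$ of $G$'' is valid: $G$ acts freely on $\mathbb{F}_q$ with $|G|=q$, hence regularly, and the normalizer of a regular subgroup induces every automorphism (the holomorph). Your counting step is also correct and shorter than the paper's: a nonempty $\mathcal{N}(i,j,u,v)$ is a single coset of $\mathfrak{C}_{\mathfrak{S}_q}(G)$, and the centralizer of a regular \emph{abelian} subgroup is $G$ itself, so $|\mathcal{N}(i,j,u,v)|=p^n$ without invoking Lemma~\ref{Atmost_lemma}. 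The paper instead argues both directions by hand: the forward direction via the contradiction $a^{p^{\alpha_1}}=b^{\beta_1}$, and the converse by explicitly constructing the maps $h_d$ and verifying injectivity and the conjugation identities.

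The genuine gap is exactly the step you flagged and postponed, and it cannot be closed, because the equivalence needed there is false. Your pair $(\beta,\alpha)$ with $0\le\alpha\le e-1$ exists precisely when $\ker\theta$ projects nontrivially to $\langle a\rangle$; hence the displayed condition of the lemma is equivalent only to $\ker\theta\subseteq\langle b\rangle$, whereas nonemptiness of $\mathcal{N}$ requires $\ker\theta=1$. The hypothesis $p^{n-2e}\mid j$ does not help: it constrains $\theta(a)$, while the bad case is governed by $\theta(b)$, since $\ker\theta\cap\langle b\rangle\neq 1$ happens exactly when $|a^ub^v|<p^{n-e}$. Concretely, take $p=2$, $n=3$, $e=1$ and $(i,j,u,v)=(1,0,1,2)$: the displayed condition holds (for even $\beta$ the first congruence fails, for odd $\beta$ the second), yet $\mathcal{N}(1,0,1,2)=\emptyset$ because conjugation preserves orders and $ab^2$ has order $2<4=|b|$; in your language, $\ker\theta=\{1,b^2\}\subseteq\langle b\rangle$. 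So the lemma as stated is false, and your reduction makes the failure transparent rather than reparable. Notably, the paper's own converse proof has the same blind spot: its injectivity argument writes $s_1-s_2=p^{\alpha}x$ with $p\nmid x$, which is vacuous when $s_1=s_2$, and in that case injectivity of $h_d$ needs $|a^ub^v|=p^{n-e}$ (in the example above $h_d(c_0)=h_d(c_4)$). Both the statement and both arguments are repaired by adding the hypothesis $|a^ub^v|=p^{n-e}$: then $\ker\theta\cap\langle b\rangle=1$, so your conclusion $\ker\theta\subseteq\langle b\rangle$ does force $\ker\theta=1$. This extra hypothesis holds for every tuple in the sets $U$ and $U'$ to which the lemma is applied in Theorem~\ref{T42}, so the enumeration downstream is unaffected.
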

\begin{proof}
Let \( i, u \in \{0, 1, \ldots, p^e-1\} \) and \( j, v \in \{0, 1, \ldots, p^{n-e}-1\} \) be integers such that \( p^{n-2e} \mid j \). Our aim is to show that \( \mathcal{N}(i, j, u, v) \neq \emptyset \) if and only if  
\[
\begin{aligned}
& ip^{\alpha} \not\equiv \beta u \pmod{p^e}, \text{ or} \\
& jp^{\alpha} \not\equiv \beta v \pmod{p^{n-e}}
\end{aligned}
\]
for all \( (\beta, \alpha) \in \mathbb{Z} \times \{0, 1, \ldots, e-1\} \). 

First, we assume that \( \mathcal{N}(i, j, u, v) \neq \emptyset \) and let \( h \in \mathcal{N}(i, j, u, v) \). If \( ip^{\alpha} \not\equiv \beta u \pmod{p^e} \) for any \( \beta \in \mathbb{Z} \), \( 0 \leq \alpha \leq e-1 \), then there is nothing to prove. Therefore, we can assume that there exists some \( \beta_1 \in \mathbb{Z} \) and \( 0 \leq \alpha_1 \leq e-1 \) such that \( ip^{\alpha_1} \equiv \beta_1 u \pmod{p^e} \). Further, if \( jp^{\alpha_1} \not\equiv \beta_1 v \pmod{p^{n-e}} \), then we are done. Therefore, we suppose that \( ip^{\alpha_1} \equiv \beta_1 u \pmod{p^e} \) and \( jp^{\alpha_1} \equiv \beta_1 v \pmod{p^{n-e}} \). Since \( h \in \mathcal{N}(i, j, u, v) \), we have \( ha^{p^{\alpha_1}}h^{-1} = hb^{\beta_1}h^{-1} \), i.e., \( a^{p^{\alpha_1}} = b^{\beta_1} \), which is not possible as \( 0 \leq \alpha_1 \leq e-1 \).

Conversely, we suppose that \( ip^{\alpha} \not\equiv \beta u \pmod{p^e} \) or \( jp^{\alpha} \not\equiv \beta v \pmod{p^{n-e}} \) for all \( (\beta, \alpha) \in \mathbb{Z} \times \{0,1,\ldots,e-1\} \). Now, we shall show that \( \mathcal{N}(i,j,u,v) \neq \emptyset \). For a fixed integer \( d \in \{0,1,\ldots,q-1\} \), we define a function
\[
h_d(c_{rp^e+s}) = a^{ur+is}b^{vr+js}(c_d),
\]
where \( r \in \{0,1,\ldots, p^{n-e}-1\} \) and \( s \in \{0,1,\ldots, p^e-1\} \). Next, we will show that \( h_d \) is a permutation and \( h_d \in \mathcal{N}(i,j,u,v) \). First, on the contrary, assume that there exist \( r_1, r_2 \in \{0,1,\ldots, p^{n-e}-1\} \) and \( s_1, s_2 \in \{0,1,\ldots,p^e-1\} \) such that \( (r_1,s_1) \neq (r_2,s_2) \) and 
\[
h_d(c_{r_1p^e+s_1}) = h_d(c_{r_2p^e+s_2}),
\]
which gives
\[
a^{u(r_1-r_2)+i(s_1-s_2)}b^{v(r_1-r_2)+j(s_1-s_2)}(c_d) = c_d.
\]
This implies
\[
\begin{aligned}
& u(r_1-r_2) + i(s_1-s_2) \equiv 0 \pmod{p^e}, \\
& v(r_1-r_2) + j(s_1-s_2) \equiv 0 \pmod{p^{n-e}},
\end{aligned}
\]
where \( |r_1-r_2| \leq p^{n-e}-1 \) and \( |s_1-s_2| \leq p^e-1 \). This is equivalent to
\[
\begin{aligned}
& ip^{\alpha} \equiv \frac{r_2-r_1}{x} u \pmod{p^e}, \\
& jp^{\alpha} \equiv \frac{r_2-r_1}{x} v \pmod{p^{n-e}},
\end{aligned}
\]
where \( s_1-s_2 = p^{\alpha}x \) for some \( 0 \leq \alpha \leq e-1 \) such that \( p \nmid x \). Thus, for \( \beta \equiv \frac{r_2-r_1}{x} \pmod{p^{n-e}} \), we get \( ip^{\alpha} \equiv \beta u \pmod{p^e} \) and \( jp^{\alpha} \equiv \beta v \pmod{p^{n-e}} \), which is a contradiction. Therefore, \( h_d \) is a permutation.

Next, we will show that \( h_d \in \mathcal{N}(i,j,u,v) \). It is easy to see that \( h_d \in \mathcal{N}(i,j,u,v) \) if and only if \( h_d a = a^{i}b^{j}h_d \) and \( h_d b = a^{u}b^{v}h_d \). From Lemma \ref{JJ_tpl}, for any \( s \in \{0,1,\ldots,p^e-1\} \) and \( r \in \{0,1,\ldots,p^{n-e}-1\} \), we have
\[
a(c_{rp^e+s}) =
\begin{cases}
c_{rp^e+s+1} & \text{if } s < p^e-1, \\
c_{rp^e} & \text{if } s = p^e-1,
\end{cases}
\quad \text{ and } \quad
b(c_{rp^e+s}) =
\begin{cases}
c_{(r+1)p^e+s} & \text{if } r < p^{n-e}-1, \\
c_s & \text{if } r = p^{n-e}-1.
\end{cases}
\]

Now,
\[
h_d a(c_{rp^e+s}) =
\begin{cases}
h_d(c_{rp^e+s+1}) & \text{if } s < p^e-1, \\
h_d(c_{rp^e}) & \text{if } s = p^e-1,
\end{cases}
=
\begin{cases}
a^{ur+i(s+1)}b^{vr+j(s+1)}(c_d) & \text{if } s < p^e-1, \\
a^{ur}b^{vr}(c_d) & \text{if } s = p^e-1.
\end{cases}
\]

On the other hand,
\[
a^{i}b^{j}h_d(c_{rp^e+s}) = a^{ur+i(s+1)}b^{vr+j(s+1)}(c_d).
\]
If \( s < p^e-1 \), then clearly \( h_d a(c_{rp^e+s}) = a^{i}b^{j}h_d(c_{rp^e+s}) \). For \( s = p^e-1 \), we have
\[
a^{i}b^{j}h_d(c_{rp^e+s}) = a^{ur+ip^e}b^{vr+jp^e}(c_d) = a^{ur}b^{vr}(c_d),
\]
as \( p^{n-2e} \mid j \) and \( |a| = p^e \), \( |b| = p^{n-e} \). Therefore, \( h_d a(c_{rp^e+s}) = a^{i}b^{j}h_d(c_{rp^e+s}) \) for all \( s \in \{0,1,\ldots,p^e-1\} \) and \( r \in \{0,1,\ldots,p^{n-e}-1\} \). Similarly, we can show that \( h_d b(c_{rp^e+s}) = a^{u}b^{v}h_d(c_{rp^e+s}) \), using \( e \leq n-e \) and \( |a| = p^e \), \( |b| = p^{n-e} \). Therefore, \( h_d \in \mathcal{N}(i,j,u,v) \). In this proof, we have seen that \( h_d \) depends on \( c_d \), where \( d \in \{0,1,\ldots,p^n-1\} \), and it is easy to see that for any \( d_1 \neq d_2 \in \{0,1,\ldots,p^n-1\} \), \( h_{d_1} \neq h_{d_2} \). Thus, using Lemma \ref{Atmost_lemma}, we have \( |\mathcal{N}(i,j,u,v)| = p^n \).
\end{proof}

The following theorem determines the exact number of $e$-Klenian polynomials for $e \geq 1$.
\begin{thm}\label{T42}
Let $q = p^n$, where $n \geq 2$ is a positive integer, and let $1 \leq e \leq \lfloor \frac{n}{2} \rfloor$ be an integer. Furthermore, let $\mathcal{P}_{f_e}$ denote the number of $e$-Klenian polynomials  $f_e$ over $\mathbb{F}_q$. Then   
\[
\mathcal{P}_{f_e} =
\begin{cases}
         \dfrac{(p^n)!(p^n-1)!}{ p^{2e}\varphi(p^e)\varphi(p^{n-e})} & \text{if } e < \frac{n}{2},\\
       
        &\\
        \dfrac{(p^n)!(p^n-1)!}{ \varphi(p^e)^2(p^{2e}+p^{2e-1})} & \text{if } e = \frac{n}{2} \text{ and } n \text{ is even},
\end{cases}
\]
where $\varphi(\cdot)$ represents Euler's totient function.
\end{thm}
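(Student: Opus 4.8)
The plan is to follow the architecture of the proof of Theorem~\ref{T41}, with Lemma~\ref{L41} playing the role of Lemma~\ref{conjugate_lemma}. Fix the $e$-Klenian group $G=\langle a,b\rangle$ of Lemma~\ref{JJ_tpl}, so that $G\cong \mathbb{Z}_{p^e}\times\mathbb{Z}_{p^{n-e}}$ acts regularly on $\mathbb{F}_q$ (here I use the normalization $n-e\ge e$ justified above). By Lemma~\ref{L41} the group of every $e$-Klenian polynomial is conjugate to $G$ in $\mathfrak{S}_q$, and conversely each conjugate of $G$, together with an ordering of its $p^n$ elements, produces a distinct $e$-Klenian polynomial; the ordering contributes the factor $p^n!$. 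Exactly as in Theorem~\ref{T41}, this yields
\[
\mathcal{P}_{f_e}=p^n!\cdot\bigl|Conj_{\mathfrak{S}_q}(G)\bigr|=\frac{(p^n!)^2}{\bigl|\mathfrak{N}_{\mathfrak{S}_q}(G)\bigr|},
\]
so the whole problem reduces to computing the order of the normalizer $\mathfrak{N}_{\mathfrak{S}_q}(G)$.

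Since $G=\langle a,b\rangle$, any $h\in\mathfrak{N}_{\mathfrak{S}_q}(G)$ is pinned down on generators by $hah^{-1}=a^ib^j$ and $hbh^{-1}=a^ub^v$ for a unique quadruple $(i,j,u,v)$, whence $\mathfrak{N}_{\mathfrak{S}_q}(G)=\coprod\mathcal{N}(i,j,u,v)$, a disjoint union over admissible quadruples. Lemma~\ref{Atmost_lemma} and Lemma~\ref{Atleast_lemma} tell us that every nonempty block has size exactly $p^n$ and that nonemptiness is governed by $p^{n-2e}\mid j$ together with the noncongruence condition of Lemma~\ref{Atleast_lemma}. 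Writing $N$ for the number of quadruples passing these tests, I get $\bigl|\mathfrak{N}_{\mathfrak{S}_q}(G)\bigr|=p^nN$ and hence $\mathcal{P}_{f_e}=\frac{p^n!(p^n-1)!}{N}$. Comparing with the claimed formula, the theorem is now equivalent to the two evaluations $N=p^{2e}\varphi(p^e)\varphi(p^{n-e})$ for $e<n/2$ and $N=\varphi(p^e)^2(p^{2e}+p^{2e-1})$ for $e=n/2$.

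The crux is the count of $N$. My approach is to recognize that a quadruple is admissible precisely when the assignment $\phi\colon a\mapsto a^ib^j$, $b\mapsto a^ub^v$ is an automorphism of $G$, and that this happens if and only if the coefficient matrix $\left(\begin{smallmatrix}i&u\\j&v\end{smallmatrix}\right)$ is invertible modulo $p$; this is the standard Frattini-quotient criterion for an endomorphism of a finite abelian $p$-group, and it converts the quantified condition of Lemma~\ref{Atleast_lemma} into a single determinant condition over $\mathbb{F}_p$. I then split into the two regimes. When $e<n/2$, the divisibility $p^{n-2e}\mid j$ forces $p\mid j$, so the reduced matrix is upper triangular and invertibility is equivalent to $p\nmid i$ and $p\nmid v$; counting the independent choices ($\varphi(p^e)$ for $i$, $\varphi(p^{n-e})$ for $v$, $p^e$ for $u$, and $p^{n-e}/p^{n-2e}=p^e$ for $j$) gives $N=p^{2e}\varphi(p^e)\varphi(p^{n-e})$. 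When $e=n/2$ we have $n-2e=0$, the constraint on $j$ evaporates, and admissibility is exactly invertibility of a full matrix over $\mathbb{Z}_{p^e}$; counting lifts of elements of $\GL_2(\mathbb{F}_p)$ gives $N=p^{4e-4}\bigl|\GL_2(\mathbb{F}_p)\bigr|=p^{4e-3}(p-1)^2(p+1)=\varphi(p^e)^2(p^{2e}+p^{2e-1})$.

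As a structural check I would note that $N=\bigl|\mathrm{Aut}(\mathbb{Z}_{p^e}\times\mathbb{Z}_{p^{n-e}})\bigr|$, consistent with the fact that the normalizer of a regular abelian subgroup is its holomorph $G\rtimes\mathrm{Aut}(G)$, with centralizer $\mathfrak{C}_{\mathfrak{S}_q}(G)=\mathcal{N}(1,0,0,1)$ of order $p^n$; this independently confirms both case formulas. The main obstacle I anticipate is the delicate equivalence between the noncongruence condition of Lemma~\ref{Atleast_lemma} and mod-$p$ invertibility of the coefficient matrix: I would establish it by translating ``$\phi$ has nontrivial kernel'' into the existence of a nonidentity element $a^{p^\alpha}b^{-\beta}$ with $0\le\alpha<e$ annihilated by $\phi$, matching the two congruences $ip^\alpha\equiv\beta u\pmod{p^e}$ and $jp^\alpha\equiv\beta v\pmod{p^{n-e}}$ appearing in the lemma.
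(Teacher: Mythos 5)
Your overall architecture (reduction to $|\mathfrak{N}_{\mathfrak{S}_q}(G)|$, decomposition into blocks $\mathcal{N}(i,j,u,v)$ of size $p^n$, and the final evaluations $N=p^{2e}\varphi(p^e)\varphi(p^{n-e})$ for $e<n/2$ and $N=|\GL_2(\mathbb{Z}_{p^e})|=\varphi(p^e)^2(p^{2e}+p^{2e-1})$ for $e=n/2$) is sound, and the numbers are right. The genuine gap is the bridge you rely on: the noncongruence condition of Lemma~\ref{Atleast_lemma} together with $p^{n-2e}\mid j$ is \emph{not} equivalent to mod-$p$ invertibility of $\left(\begin{smallmatrix} i & u\\ j & v\end{smallmatrix}\right)$, and your proposed repair --- normalizing a nontrivial kernel element of $\phi$ to the form $a^{p^{\alpha}}b^{-\beta}$ with $0\le\alpha<e$ --- breaks down exactly when $\ker\phi\subseteq\langle b\rangle$. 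Concretely, take $(i,j,u,v)=(1,0,0,0)$: it satisfies $p^{n-2e}\mid j$ and the noncongruence condition (the first disjunct $ip^{\alpha}=p^{\alpha}\not\equiv 0=\beta u\pmod{p^e}$ holds for every $(\beta,\alpha)$ with $\alpha<e$), yet $\phi(a)=a$, $\phi(b)=1$ is not an automorphism, the matrix is singular mod $p$, and $\mathcal{N}(1,0,0,0)=\emptyset$ because conjugation cannot send $b$ to the identity; a less degenerate example is $(1,0,0,p)$ when $n-e\ge 2$. So ``divisibility $+$ noncongruence'' is strictly weaker than admissibility: the two tests define different sets of quadruples, so they cannot both compute $N$, and counting by the former would make $\mathcal{P}_{f_e}$ too small. (This also shows Lemma~\ref{Atleast_lemma} is only valid under extra hypotheses; the paper silently supplies them by first restricting to the sets $U$, $U'$ of quadruples preserving the orders of the generators, i.e.\ $|a^ib^j|=p^e$ and $|a^ub^v|=p^{n-e}$, and only then invoking the lemma. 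On $U$ your kernel-normalization does work, because $|a^ub^v|=p^{n-e}$ forces $\ker\phi\cap\langle b\rangle=1$, so any nontrivial kernel element has a nonzero $a$-part.)

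The fix is already in your last paragraph, and promoting it from ``structural check'' to the actual argument gives a correct proof that is genuinely different from, and cleaner than, the paper's: prove directly that $\mathcal{N}(i,j,u,v)\neq\emptyset$ if and only if $a\mapsto a^ib^j$, $b\mapsto a^ub^v$ extends to an automorphism of $G$. The forward implication is immediate, since conjugation by $h$ carries $G$ onto $G$. For the converse, note that $G$ has order $q$ and acts without fixed points on the $q$ elements of $\mathbb{F}_q$, hence acts regularly; identifying $\mathbb{F}_q$ with $G$ via $g\mapsto g(c_0)$, every $\sigma\in\mathrm{Aut}(G)$ becomes a permutation of $\mathbb{F}_q$ satisfying $\sigma g\sigma^{-1}=\sigma(g)$ for all $g\in G$. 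Equivalently, $\mathfrak{N}_{\mathfrak{S}_q}(G)$ is the holomorph of $G$, so $|\mathfrak{N}_{\mathfrak{S}_q}(G)|=|G|\cdot|\mathrm{Aut}(G)|=p^nN$ with $N=|\mathrm{Aut}(G)|$, bypassing Lemma~\ref{Atleast_lemma} entirely; your Frattini-quotient computation of $|\mathrm{Aut}(\mathbb{Z}_{p^e}\times\mathbb{Z}_{p^{n-e}})|$ in the two regimes then finishes the theorem. By contrast, the paper keeps Lemma~\ref{Atleast_lemma}, restricts to $U$ and $U'$ by hand, and performs the explicit congruence counts of $B_1$ (via $B_2=\{p\mid i\}$) and $B_1'$ (via $C_1',C_2',C_3'$); your route trades that case analysis for one standard group-theoretic fact plus a known automorphism-group order.
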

\begin{proof}
Using a similar argument as in Theorem \ref{T41}, we have
\[
\mathcal{P}_{f_e} = p^n! \times |Conj_{\mathfrak{S}_q}(G)| = \dfrac{(p^n!)^2}{|\mathfrak{N}_{\mathfrak{S}_q}(G)|},
\]
where $Conj_{\mathfrak{S}_q}(G) = \{hGh^{-1} \mid h \in \mathfrak{S}_q\}$ is the set of all conjugates of $G$ in $\mathfrak{S}_q$, and $\mathfrak{N}_{\mathfrak{S}_q}(G) = \{h \in \mathfrak{S}_q \mid hGh^{-1} = G\}$ is the normalizer of $G$ in $\mathfrak{S}_q$.

Now, we compute the cardinality of $\mathfrak{N}_{\mathfrak{S}_q}(G)$. Since $G = \langle a, b \rangle$, we have
\[
\mathfrak{N}_{\mathfrak{S}_q}(G) = \{g \in \mathfrak{S}_q \mid gag^{-1} \in G \text{ and } gbg^{-1} \in G \} = \bigcup \mathcal{N}(i,j,u,v),
\]
where the union runs over all $i, u \in \{0, 1, \ldots, p^e-1\}$ and $j, v \in \{0, 1, \ldots, p^{n-e}-1\}$. Again, by similar reasoning as in Theorem \ref{T41}, we have
\[
|\mathfrak{N}_{\mathfrak{S}_q}(G)| = \sum |\mathcal{N}(i,j,u,v)|,
\]
where the summation runs over all $i, u \in \{0, 1, \ldots, p^e-1\}$ and $j, v \in \{0, 1, \ldots, p^{n-e}-1\}$.

Now, we consider two cases: $e < n-e$ and $n-e = e$.
  
\textbf{Case 1:} In this case, we assume that $e < n-e$. It is easy to see that for any $i, u \in \{0, 1, \ldots, p^e-1\}$ and $j, v \in \{0, 1, \ldots, p^{n-e}-1\}$, if $\mathcal{N}(i,j,u,v) \neq \emptyset$, then $|a^{u}b^{v}| = p^{n-e}$ and $|a^{i}b^{j}| = p^e$. This gives $(i, j, u, v) \in U = U_1 \coprod U_2$, where
\begin{equation*}
\begin{split}
U_1 &= \{(i, j, u, v) \mid \gcd(p^{n-e}, v) = \gcd(p, v) = 1, j = k_1p^{n-2e}, \gcd(k_1, p) = 1, 0 \leq k_1 \leq p^e-1 \}, \\ 
U_2 &= \{(i, j, u, v) \mid \gcd(p^{n-e}, v) = \gcd(p, v) = 1, \gcd(i, p^e) = 1, j = k_2p^{n-2e+1}, 0 \leq k_2 \leq p^{e-1}-1 \}.
\end{split}
\end{equation*}
Therefore,
\[
|\mathfrak{N}_{\mathfrak{S}_q}(G)| = \sum |\mathcal{N}(i,j,u,v)| = \sum_{(i,j,u,v) \in U} |\mathcal{N}(i,j,u,v)|.
\] 

We can see that if $(i, j, u, v) \in U$, then $p^{n-2e} \mid j$. Therefore, using Lemma \ref{Atleast_lemma}, for any $(i, j, u, v) \in U$, $\mathcal{N}(i,j,u,v) \neq \emptyset$ if and only if $ip^{\alpha} \not\equiv \beta u \pmod{p^e}$ or $jp^{\alpha} \not\equiv \beta v \pmod{p^{n-e}}$ for all $\beta \in \mathbb{Z}$ and $0 \leq \alpha \leq e-1$. In this case, $|\mathcal{N}(i,j,u,v)| = p^n$. Thus,
\[
|\mathfrak{N}_{\mathfrak{S}_q}(G)| = \sum |\mathcal{N}(i,j,u,v)| = \sum_{(i,j,u,v) \in U} |\mathcal{N}(i,j,u,v)| = p^n |B_1|,
\]  
where
\[
B_1 := \{(i, j, u, v) \in U \mid ip^{\alpha} \not\equiv \beta u \pmod{p^e} \text{ or } jp^{\alpha} \not\equiv \beta v \pmod{p^{n-e}} \text{ for all } \beta \in \mathbb{Z}, 0 \leq \alpha \leq e-1 \}.
\]

Now, consider the following set
\begin{align*}
B_2 &= U \setminus B_1 \\
&= \{(i, j, u, v) \in U \mid ip^{\alpha} \equiv \beta u \pmod{p^e} \text{ and } jp^{\alpha} \equiv \beta v \pmod{p^{n-e}} \text{ for some } \beta \in \mathbb{Z}, 0 \leq \alpha \leq e-1 \}.
\end{align*}
Since $U = B_1 \coprod B_2$, we have $|B_1| = |U_1| + |U_2| - |B_2|$. 

Next, we compute the cardinality of $B_2$. Let $(i, j, u, v) \in B_2$. Then, we have
\[
ip^{\alpha} \equiv \beta u \pmod{p^e} \quad \text{and} \quad jp^{\alpha} \equiv \beta v \pmod{p^{n-e}} \text{ for some } \beta \in \mathbb{Z}, 0 \leq \alpha \leq e-1.
\]
Since $\gcd(p, v) = 1$, it follows that $juv^{-1}p^{\alpha} \equiv \beta u \pmod{p^{n-e}}$. This implies $juv^{-1}p^{\alpha} \equiv \beta u \pmod{p^e}$ as $n-e > e$. Thus, $juv^{-1} \equiv i \pmod{p^{e-\alpha}}$, which implies $p \mid p^{e-\alpha} \mid juv^{-1} - i$. Since $p \mid j$ (as $n-2e > 0$), we have $p \mid i$.

Now, we claim that if $p \mid i$, then $(i, j, u, v) \in B_2$. For this, take $\alpha = e-1$ and $\beta = jVp^{e-1}$, where $V = v^{-1} \pmod{p^{n-e}}$. Therefore,
\[
|B_2| = p^{e-1} \varphi(p^e) p^e \varphi(p^{n-e}).
\]

Thus,
\[
|B_1| = p^e \varphi(p^e) p^e \varphi(p^{n-e}) + \varphi(p^e) p^{e-1} p^e \varphi(p^{n-e}) - p^{e-1} \varphi(p^e) p^e \varphi(p^{n-e}) = p^{2e} \varphi(p^e) \varphi(p^{n-e}).
\]

Finally, we find
\[
|\mathfrak{N}_{\mathfrak{S}_q}(G)| = p^n p^{2e} \varphi(p^e) \varphi(p^{n-e}),
\]
and hence,
\[
\mathcal{P}_{f_e} = \dfrac{(p^n)!(p^n-1)!}{p^{2e} \varphi(p^e) \varphi(p^{n-e})},
\]
when $e < \frac{n}{2}$.

\textbf{Case 2:} Now, we assume that $n - e = e$. In this case, if $\mathcal{N}(i, j, u, v) \neq \emptyset$ for some $i, u \in \{0, 1, \ldots, p^e - 1\}$ and $j, v \in \{0, 1, \ldots, p^{n-e} - 1\}$, then $(i, j, u, v) \in U' = \left(V_1' \coprod V_2'\right) \times \left(U_1' \coprod U_2'\right)$, where 
\[
V_1' = \{(i, j) \mid \gcd(j, p^e) = 1\}, \quad \text{and} \quad V_2' = \{(i, j) \mid \gcd(i, p^e) = 1 \text{ and } p \mid j\},
\]
\[
U_1' = \{(u, v) \mid \gcd(v, p^e) = 1\}, \quad \text{and} \quad U_2' = \{(u, v) \mid \gcd(u, p^e) = 1 \text{ and } p \mid v\}.
\]
Thus, we have 
\[
|\mathfrak{N}_{\mathfrak{S}_q}(G)| = \sum |\mathcal{N}(i, j, u, v)| = \sum_{(i, j, u, v) \in U'} |\mathcal{N}(i, j, u, v)|.
\]

Now, let $(i, j, u, v) \in U'$. Since $p^{n-2e} = 1 \mid j$, from Lemma~\ref{Atleast_lemma}, $\mathcal{N}(i, j, u, v) \neq \emptyset$ if and only if 
\[
ip^\alpha \not\equiv \beta u \pmod{p^e} \quad \text{or} \quad jp^\alpha \not\equiv \beta v \pmod{p^e},
\]
for all $\beta \in \mathbb{Z}$ and $0 \leq \alpha \leq e-1$. In this case, $|\mathcal{N}(i, j, u, v)| = p^n$. Note that we can write $U' = B_1' \coprod B_2'$, where 
\begin{equation*}
\begin{split}
B_1' &= \{(i, j, u, v) \in U' \mid ip^\alpha \not\equiv \beta u \pmod{p^e} \text{ or } jp^\alpha \not\equiv \beta v \pmod{p^e} \text{ for all } \beta \in \mathbb{Z}, 0 \leq \alpha \leq e-1\},\\
B_2' &= \{(i, j, u, v) \in U' \mid ip^\alpha \equiv \beta u \pmod{p^e} \text{ and } jp^\alpha \equiv \beta v \pmod{p^e} \text{ for some } \beta \in \mathbb{Z}, 0 \leq \alpha \leq e-1\}.
\end{split}
\end{equation*}

Thus, we have 
\[
|\mathfrak{N}_{\mathfrak{S}_q}(G)| = \sum |\mathcal{N}(i, j, u, v)| = p^n |B_1'|.
\]

Since $U' = B_1' \coprod B_2'$, it follows that $|B_1'| = |U'| - |B_2'|$. Next, we compute the cardinality of the set $B_2'$. To do this, we write $B_2' = C_1' \coprod C_2' \coprod C_3'$, where  
\begin{equation*}
\begin{split}
C_1' &= \{(i, j, u, v) \in B_2' \mid \gcd(v, p) = 1 \text{ and } \gcd(u, p) \neq 1\},\\
C_2' &= \{(i, j, u, v) \in B_2' \mid \gcd(v, p) = 1 \text{ and } \gcd(u, p) = 1\},\\
C_3' &= \{(i, j, u, v) \in B_2' \mid \gcd(v, p) \neq 1 \text{ and } \gcd(u, p) = 1\}.
\end{split}
\end{equation*}

We now compute the cardinalities of $C_1'$, $C_2'$, and $C_3'$ separately in the following subcases:

\textbf{Subcase 2.1:} Firstly, we assume that $(i, j, u, v) \in C_1'$. Then, $\gcd(v, p) = 1$ and $\gcd(u, p) \neq 1$. In this subcase, we have $v^{-1} u j \equiv i \pmod{p^{e-\alpha}}$, which implies $v^{-1} u j \equiv i \pmod{p}$. Since $p \mid u$, it follows that $p \mid i$. Conversely, if $p \mid i$, we can take $\alpha = e-1$ and $\beta = j p^{e-1} y$, where $y = v^{-1} \pmod{p^e}$. It is straightforward to verify that $(i, j, u, v) \in B_2'$, so $(i, j, u, v) \in C_1'$. 

Therefore, if $(i, j, u, v) \in U'$, then $(i, j, u, v) \in C_1'$ if and only if $\gcd(v, p) = 1$, $\gcd(u, p) \neq 1$, and $p \mid i$. Thus, 
\[
|C_1'| = p^{2e-2} \varphi(p^e)^2.
\]

\textbf{Subcase 2.2:} Now, we assume that $(i, j, u, v) \in C_2'$, i.e., $\gcd(v, p) = 1$ and $\gcd(u, p) = 1$. Since $\gcd(v, p) = 1$, we obtain $i \equiv j u v^{-1} \pmod{p^{e-\alpha}}$, which implies $i \equiv j u v^{-1} \pmod{p}$ and $\gcd(i, p) = 1 = \gcd(j, p)$. Conversely, suppose $i \equiv j u v^{-1} \pmod{p}$. Then clearly $(i, j, u, v) \in B_2'$ by choosing $\alpha = e-1$ and $\beta = j p^{e-1} y$, where $y = v^{-1} \pmod{p^e}$. Hence, $(i, j, u, v) \in C_2'$. 

Thus, if $(i, j, u, v) \in U'$, then $(i, j, u, v) \in C_2'$ if and only if $\gcd(v, p) = 1$, $\gcd(u, p) = 1$, and $i \equiv j u v^{-1} \pmod{p}$. For any fixed $j, u, v$, there are $p^{e-1}$ distinct values of $i$ satisfying $i \equiv j u v^{-1} \pmod{p}$. Since $j, u, v$ each have $\varphi(p^e)$ choices, we get 
\[
|C_2'| = p^{e-1} \varphi(p^e)^3.
\]

\textbf{Subcase 2.3:} In this subcase, we assume that $(i, j, u, v) \in C_3'$, so $(i, j, u, v) \in B_2'$, $\gcd(v, p) \neq 1$, and $\gcd(u, p) = 1$. Using a similar argument as in Subcase 2.1, if $(i, j, u, v) \in U'$, then $(i, j, u, v) \in C_3'$ if and only if $\gcd(v, p) \neq 1$, $\gcd(u, p) = 1$, and $p \mid j$. Therefore, 
\[
|C_3'| = p^{2e-2} \varphi(p^e)^2.
\]

From the above subcases, we have 
\[
|B_2'| = |C_1'| + |C_2'| + |C_3'| = p^{2e-2} \varphi(p^e)^2 + p^{e-1} \varphi(p^e)^3 + p^{2e-2} \varphi(p^e)^2.
\]

Thus, 
\[
|B_1'| = |U'| - |B_2'| = p^{2e-1} \varphi(p^e)^2 (p + 1),
\]
and 
\[
|\mathfrak{N}_{\mathfrak{S}_q}(G)| = p^n p^{2e-1} \varphi(p^e)^2 (p + 1).
\]

Finally, 
\[
\mathcal{P}_{f_e} = \frac{(p^n)!(p^n - 1)!}{\varphi(p^e)^2 (p^{2e} + p^{2e-1})},
\]
when $e = \frac{n}{2}$ and $n$ is even.
\end{proof}
\begin{thm}
Let $f_e$ be an $e$-Klenian polynomial and its corresponding permutation polynomial tuple is $\underline{\beta}_{f_e}=(\beta_0,\beta_1,\ldots,\beta_{q-1})\in \mathfrak{S}_q^{q}$. Then there are $q!$ permutation group polynomials equivalent to $f_e$.
\end{thm}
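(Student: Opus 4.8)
The plan is to invoke Theorem~\ref{Equivalent_PGP}, which already reduces the count to a centralizer computation. Writing $G = \{\beta_0, \ldots, \beta_{q-1}\} = \langle a, b\rangle$ for the group attached to the $e$-Klenian polynomial $f_e$ as in Lemma~\ref{JJ_tpl}, that theorem gives exactly $\frac{q(q!)}{|\mathfrak{C}_{\mathfrak{S}_q}(G)|}$ permutation group polynomials equivalent to $f_e$. Hence it suffices to prove $|\mathfrak{C}_{\mathfrak{S}_q}(G)| = q$, after which the desired count $q!$ is immediate.

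First I would observe that an element $h \in \mathfrak{S}_q$ centralizes $G$ if and only if it commutes with both generators, i.e.\ $hah^{-1} = a$ and $hbh^{-1} = b$. In the notation introduced just before Lemma~\ref{Atmost_lemma}, this is precisely the condition $h \in \mathcal{N}(1, 0, 0, 1)$, since $a = a^{1} b^{0}$ and $b = a^{0} b^{1}$. Therefore $\mathfrak{C}_{\mathfrak{S}_q}(G) = \mathcal{N}(1, 0, 0, 1)$, and the problem is reduced to evaluating $|\mathcal{N}(1,0,0,1)|$.

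Next I would apply Lemma~\ref{Atleast_lemma} with $(i,j,u,v) = (1,0,0,1)$. Here $j = 0$, so the hypothesis $p^{n-2e} \mid j$ holds trivially and the lemma is applicable. Its nonemptiness criterion requires, for every $(\beta, \alpha) \in \mathbb{Z} \times \{0, 1, \ldots, e-1\}$, that $i p^{\alpha} \not\equiv \beta u \pmod{p^e}$ or $j p^{\alpha} \not\equiv \beta v \pmod{p^{n-e}}$. Since $u = 0$, the first congruence reads $p^{\alpha} \not\equiv 0 \pmod{p^e}$, which holds for all $0 \leq \alpha \leq e-1$ because $\alpha < e$. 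This is where the assumption $e \geq 1$ is used, ensuring the index set $\{0,\ldots,e-1\}$ is nonempty so that the criterion is genuinely satisfied. Thus $\mathcal{N}(1,0,0,1) \neq \emptyset$, and the lemma then yields $|\mathcal{N}(1,0,0,1)| = p^n = q$.

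Combining these observations gives $|\mathfrak{C}_{\mathfrak{S}_q}(G)| = q$, and substituting into the formula of Theorem~\ref{Equivalent_PGP} produces $\frac{q(q!)}{q} = q!$, as claimed. The argument is essentially routine once the reduction is in place; the only point requiring care is the identification $\mathfrak{C}_{\mathfrak{S}_q}(G) = \mathcal{N}(1,0,0,1)$ together with the verification that, because $u = 0$, the first congruence in the nonemptiness criterion is vacuously favorable and no obstruction can arise from the second congruence. I do not anticipate any serious obstacle.
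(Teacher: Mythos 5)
Your proposal is correct and follows essentially the same route as the paper's own proof: identify $\mathfrak{C}_{\mathfrak{S}_q}(G)$ with $\mathcal{N}(1,0,0,1)$, compute $|\mathcal{N}(1,0,0,1)| = p^n = q$ via Lemma~\ref{Atleast_lemma}, and conclude by Theorem~\ref{Equivalent_PGP}. The only difference is that you spell out the verification of the nonemptiness criterion (which the paper leaves implicit), and your side remark that $e \geq 1$ is needed for the criterion to be ``genuinely satisfied'' is inessential, since an empty index set would make it hold vacuously anyway.
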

\begin{proof}
For $G$, the group associated to an $e$-Klenian polynomials defined in Lemma \ref{JJ_tpl}, $\mathfrak{C}_{\mathfrak{S}_q}(G)=\mathcal{N}(1,0,0,1)$ and from Lemma \ref{Atleast_lemma}, we have $|\mathcal{N}(1,0,0,1)|=p^n=q$. Thus, $|\mathfrak{C}_{\mathfrak{S}_q} (G)|=q$ and the result follows from Theorem \ref{Equivalent_PGP}. 
  \end{proof} 

\section{Concluding Remarks and Future Directions}\label{S5}

Bivariate local permutation polynomials over the finite field $\F_q$ have attracted significant interest, since they are in one-to-one correspondence with Latin squares of order $q$. In this work, we constructed a family of permutation group polynomials, a special class of bivariate local permutation polynomials, over the finite field $\F_q$ of arbitrary characteristic, and we also provided explicit expression for their companion.

In \cite{JJ_2023}, the authors posed the problem of counting the $e$-Klenian polynomials for all $e \ge 1$, noting that this is a nontrivial problem. We completely resolve this problem. Motivated by this, we further determined the exact number of permutation group polynomials that are of the form constructed in this paper. We also computed the number of permutation group polynomials equivalent to our proposed family, and those equivalent to 
$e$-Klenian polynomials.

As noted earlier, the construction of permutation group polynomials is purely a group-theoretic problem. In particular, it remains open to develop a systematic approach for identifying all subgroups $G < \mathfrak{S}_q$ such that
\begin{itemize}
    \item[(a)] $|G| = q$, and
    \item[(b)] $g(c) \ne c$ for all $c \in \F_q$ and for all $g \in G$, $g \ne I$.
\end{itemize}
To date, only a few such subgroups of $\mathfrak{S}_q$ are known, highlighting the need for further research in this direction. In particular, for small $n$, one may attempt to classify such subgroups of order $p^n$ using the existing knowledge of isomorphism classes of groups of order $p^n$.

Another direction is to investigate the problem of enumerating all companions of a given bivariate local permutation polynomial over $\F_q$.

\section*{Acknowledgments}
We sincerely thank the editors for handling our paper, and the referees for their careful reading, valuable comments, and constructive suggestions.


\begin{thebibliography}{99}

\bibitem{EulerT} L. Euler, {\it Recherches sur une nouvelle esp\`{e}ce de quarr\'{e}s magiques,} Verh. Zeeuwsch. Gennot. Weten. Vliss. 9 (1782), 85--239. 

\bibitem{JJ_2023} J. Gutierrez and J.J. Urroz, {\it Local permutation polynomials and the action of $e$-Klenian groups,} Finite Fields Appl.  91 (2023), 102261.

\bibitem{HK} S. U. Hasan and H. Kumar, {\it Local permutation polynomials and their companions,} Finite Fields Appl. 109 (2026), 102717.

\bibitem{HKO} A. Hulpke, P. Kaski and P. \"{O}sterg\aa rd, {\it The number of Latin squares of order 11,} Math. Comp. 80 (274) (2011), 1197--1219.

\bibitem{KD_2015} A.D. Keedwell and J. D\'{e}nes, {\it Latin Squares and Their Applications,} Elsevier, Amsterdam, 2015.

\bibitem{LNH_1997} R. Lidl and H. Niederreiter, {\it Finite Fields, 2nd edn.,} Encyclopedia Math. Appl., Vol. 20, Cambridge University Press, Cambridge, 1997.


\bibitem{MGFL_2020} L. Mariot, M. Gadouleau, E. Formenti and A. Leporati, {\it Mutually orthogonal latin squares based on cellular automata,} Des. Codes Cryptogr. 88 (2) (2020), 391--411.

\bibitem{MW} B.D. McKay and I.M. Wanless, {\it On the number of Latin squares,} Ann. Comb., 9 (3) (2005), 335--344.

\bibitem{MM_2017}  D.C. Montgomery, {\it Design and Analysis of Experiments,} Wiley, Hoboken, 2017.

\bibitem{Mullen_1980} G.L. Mullen, {\it Local permutation polynomials over $\mathbb{Z}_p$,} Fibonacci Q. 18 (1980), 104--108.

\bibitem{Mullen_1_1980} G.L. Mullen, {\it Local permutation polynomials in three variables over $\mathbb{Z}_p$,} Fibonacci Q. 18 (1980), 208--214.

\bibitem{Mullen_book} G.L. Mullen, {\it Local permutation polynomials over a finite field,} Det Kong. Norske Vid. Selskab, Series Skrifter, 1 (1981), 1--4.

\bibitem{NH_1970} H. Niederreiter, {\it Permutation polynomials in several variables over finite fields,} Proc. Jpn. Acad. 46
(1970), 1001--1005.

\bibitem{NH_1971} H. Niederreiter, {\it Orthogonal systems of polynomials in finite fields,} Proc. Amer. Math. Soc. 28.2 (1971), 415--422.

\bibitem{SS_1992} D.R. Stinson, {\it Combinatorial characterizations of authentication codes,} Des. Codes Cryptogr. 2 (2) (1992), 175--187.

\bibitem{WW_2014}  A. Winterhof, {\it Generalizations of complete mappings of finite fields and some applications,} J. Symb. Comput. 64 (2014), 42--52.
\end{thebibliography}
\end{document}